\newcommand{\be}{\begin{equation} }
\newcommand{\ee}{\end{equation}}
\newcommand{\bee}{\begin{equation*} }
\newcommand{\eee}{\end{equation*}}
\newcommand{\bse}{\begin{subequations}}
\newcommand{\ese}{\end{subequations}}
\newcommand{\LN}{\left\|}
\newcommand{\RN}{\right\|}
\newcommand{\R}{\mathbb{R}}
\theoremstyle{plain}
\newtheorem{theorem}{Theorem}[section]
\newtheorem{corollary}[theorem]{Corollary}
\newtheorem{lemma}{Lemma}[section]
\newtheorem{proposition}{Proposition}[section]
\theoremstyle{remark}
\newtheorem{remark}{Remark}[section]
\theoremstyle{definition}
\numberwithin{equation}{section}
\title[ Asymptotic stability  for the $b$-family of equations]{ Asymptotic stability  of solitary waves for the $b$-family of equations}
\author[J. Wu]{Jun Wu}
\address[Jun Wu]{School of Mathematics and Statistics, Henan University
475004, China}\email{wujun@henu.edu.cn}
\author[Y. Liu]{Yue Liu}
\address[Y. Liu (corresponding author]{Department of Mathematics, University of Texas at Arlington, TX 76019, USA}\email{yliu@uta.edu}
\author[Z. Wang]{Zhong Wang}
\address[Zhong Wang]{School of Mathematics, Foshan University, 528000,  China}\email{wangzh79@fosu.edu.cn}
\date{}
\begin{document}
\thispagestyle{empty}

\begin{abstract}

We establish the asymptotic stability of lefton solutions-exponentially localized stationary solitary waves-for the $b$-family of equations with positive momentum density in the regime $b < -1$. Unlike the completely integrable Camassa-Holm $(b=2)$ and Degasperis-Procesi $(b=3)$ cases, this parameter range lies outside integrability and exhibits distinct nonlinear dynamics. Our analysis adapts the Martel-Merle framework for generalized KdV equations to the nonlocal, non-integrable structure of the $b$-family of equations. The proof combines a nonlinear Liouville property for solutions localized near leftons with a refined spectral analysis of the associated linearized operator. These results provide the first rigorous asymptotic stability theory for leftons in the non-integrable $b$-family of equations.


\end{abstract}

\maketitle

\noindent {\sl Keywords\/}:
Asymptotic stability;  $b$-family of equations; leftons; Liouville property
\vskip 0.4cm
 \noindent{\sl AMS Subject Classification} (2010): 35Q51, 35B35, 35B40.
\maketitle

\section{Introduction}\label{sec1}
We investigate the one-dimensional $b$-family of nonlinear evolution equations, which models the competition between convection and stretching mechanisms in fluid dynamics within the context of nonlinear wave propagation \cite{DHH1,DGH}. In the absence of viscosity and linear dispersion, the $b$-family of equations  takes the form
\begin{equation}\tag{$b$-family}\label{main}
u_t - u_{xxt} + (b+1)uu_x = b u_x u_{xx} + u u_{xxx}, \quad u(t,x)\in \mathbb{R}, \ \ (t,x)\in \mathbb{R}\times\mathbb{R},
\end{equation}
where $u=u(t,x)$ denotes the fluid velocity field, $x\in \mathbb{R}$ is the spatial variable, $t\in \mathbb{R}$ is time, and $b\in \mathbb{R}$ is a dimensionless parameter representing the relative strength of stretching versus convection in the dynamics of nonlinear waves. The $b$-family of equations can be viewed as a nonlinear dispersive generalization of shallow-water models. Its derivation traces back to asymptotic reductions of the Korteweg-de Vries (KdV) equation, obtained via a Kodama transformation followed by a Galilean transformation \cite{DHH1,DGH}.
A remarkable feature of \eqref{main} is that it unifies, within a single parametric framework, several well-known nonlinear wave equations. Among these, two distinguished cases are completely integrable:
The Camassa-Holm (CH) equation ($b=2$), first derived in \cite{CH,CHH} as an approximation to the incompressible Euler equations in shallow water, modeling unidirectional wave propagation. The CH equation admits a Lax pair, bi-Hamiltonian structure, and peaked soliton (peakon) solutions.
The Degasperis-Procesi (DP) equation ($b=3$), identified in \cite{DP} via the method of asymptotic integrability applied to third-order dispersive conservation laws. Like the CH equation, the DP equation is integrable and supports peakon solutions, but with different stability and interaction properties.
Beyond these two exceptional cases ($b=2,3$), the $b$-family of equations \eqref{main} is not completely integrable. Nevertheless, it remains an important model for exploring nonlinear wave phenomena in fluid mechanics, providing a bridge between integrable systems, approximate models of the Euler equations, and more general non-integrable wave dynamics.

Most studies on \eqref{main} have concentrated on its alternative formulation. This reformulation arises naturally by applying the Helmholtz operator to $u$, which not only simplifies the analytical structure of the equation but also reveals its Hamiltonian formulation and geometric interpretation. In this form, equation \eqref{main} can be equivalently rewritten as
\begin{equation}\label{bfe}
    m_t+um_x+bu_xm=0\quad\quad\text{with}\quad m:=u-u_{xx},
\end{equation}
which is a  transport equation for the momentum density $m$.
In this setting of the transport equation, although the $b$-family equation is not integrable for $b\neq2,3$, there are the following three conserved quantities along the flow of \eqref{bfe} for $b\neq0,1$,
\begin{align}\label{enc}
    E=\int_{\mathbb{R}}m\,\mathrm{d}x,~~~~~~~F_1=\int_{\mathbb{R}}m^{\frac{1}{b}}\,\mathrm{d}x\quad
    \text{and}
    \quad F_2=\int_{\mathbb{R}}m^{-\frac{1}{b}}\left(\frac{m_x^2}{b^2m^2}+1\right)\,\mathrm{d}x.
\end{align}
The conservation law formulation of the $b$-family of  equations \eqref{bfe} is derived by means of the Helmholtz operator, through which \eqref{bfe} can be recasted into the following form
\[ m_t+\big(\frac{b-1}{2}(u^2-u_x^2)+um\big)_x=0.\]
There is a skew-symmetric operator, which shows up in the abstract Hamiltonian system
\begin{equation}\label{AHS}
m_t=\frac{1}{b-1}\mathcal{B}\frac{\delta E}{\delta m},
\end{equation}
where the functional $E$ is defined in \eqref{enc}, and the skew-symmetric operator
\begin{equation}\label{ob}
 \mathcal{B}:=\mathcal{B}(m)=-\left(b m \partial_x+m_x\right)\left(\partial_x-\partial_x^3\right)^{-1}\left(b m \partial_x+(b-1) m_x\right)
\end{equation}
satisfies the Jacobi identity \cite{HH05}. In fact, the Hamiltonian \eqref{AHS} is equivalent to the $b$-family of equations \eqref{bfe} by employing the Legendre transformation for $b\neq1.$ The Hamiltonian operator $\mathcal{B}$ is still valid in the case $b = 1,$ but we should replace $\frac{1}{b-1}E$ with
$$\int m\log m\,\mathrm{d}x.$$

Contributing to the fascination with the $b$-family of equations is the fact that it possesses many forms of special solutions. For the zero asymptotic value (the asymptotic value is the
traveling wave solution at infinity), different values of $b$ significantly alter the behavior of the solution to \eqref{main}, the work of Holm and Staley \cite{HS1,HS2} exhibited that
assigning different initial data and values of $b$ yields different numerical simulation results for the solution of \eqref{main}. They distinguished that there are three parameter regimes for the key point at $b=1$ and $b=-1$. The initial data separates asymptotically into a series of peaked traveling
waves in the Peakon regime with $b>1$ and a number of exponentially localized stationary solitary waves with the $\operatorname{sech}$ shape called "leftons" for $b<-1.$
In the Ramp-cliff regime with $-1<b<1,$ the asymptotic behavior of the solution is analogous to the combination of a "ramp" solution (proportional to $x/t$) with an exponentially-decaying tail ("cliff"). In a certain sense, the separation behaviors of peakon regime and lefton regime are consistent with the soliton resolution conjecture.

The derivation of \eqref{main} was carried out according to the Galilean transform making the linear dispersion terms absorb away, which loses the balance between nonlinear steepening and linear dispersion. However, the balance of nonlinear and non-local terms in \eqref{main} can still produce peaked solitary waves called peakons with any  $b > 1,$
\begin{equation}\label{peakon}
u(t,x)=ce^{-|x-ct|}.
\end{equation}
The orbital stability of peakons for the CH equation ($b=2$) \cite{CM,CS} in the energy space $H^1(\mathbb{R})$ and for the DP equation $(b=3)$ \cite{LL} in the energy space $L^2(\mathbb{R})$ was known by using the energy functional to establish two integral relations. Indeed, since the Cauchy problem of the $b$-family of equations is ill-posed in $H^s(\mathbb{R})$ for $s<\frac{3}{2}$ with $b>1,$ the orbital stability results remain valid while the local solutions exist. It was shown in \cite{LM, LM19} that the peakons \eqref{peakon} of  the CH equation ($b=2$) and  the DP equation $(b=3)$ are asymptotically stable in $H^1(\R)$ with a momentum density that is a non negative finite measure. The main point was the proof of a rigidity result for uniformly almost localized solutions of
\eqref{main} for all $b\geq1$ in this class. For $b<1,$ using numerics, in their pioneering works Holm and Staley \cite{HS1,HS2} showed that the peakons of the $b$-family equation are unstable. Subsequent to the numerical simulations, thorough spectral analysis of the linearized operator and numerical analysis in \cite{LP,CPKL} verifies the instability conjecture. In addition, they also numerically indicated that the stationary solitary wave solution only exist in the parameter regime $b<-1$.

A single stationary solitary wave solution, called a lefton, of \eqref{main} is a special solution of the form
\begin{equation}\label{ul}
    q(x)= A \left(\cosh{\nu(x-x_*)}\right)^{-\frac{1}{\nu}},\quad \nu=-\frac{b+1}{2}>0,
\end{equation}
where the arbitrary constants $x_*$ and $A>0$ represent the position parameter and amplitude, respectively. Meanwhile, in terms of momentum density,  it is easy to verify that
$$u=p*m=\int_{\mathbb{R}}p(x-y)m(y)dy,$$
where $p(x)=\frac12e^{-|x|}$ is the Green function of the Helmholtz operator $1-\partial_x^2.$ The lefton corresponding to the the momentum density $m$ has the form
\begin{equation}\label{ml}
    Q(x)= (1 - \partial_x^2) q(x) = A\frac{1-b}{2}\left(\cosh{\nu(x-x_*)}\right)^{\frac{b}{\nu}},\quad \nu=-\frac{b+1}{2}>0, \quad A>0,
\end{equation}
for $b<-1,$ and it retains a shape similar to that of KdV soliton.
The relationship between $q$ and  $Q$ can be characterized by $q^bQ=\frac{1-b}{2}A^{b+1}$.
\begin{remark}
    Observing the three conserved quantities of the $b$-family of equations, we can see that within the real line, the functional $F_1$ diverges for $b<0$, and the functional $F_2$ is only meaningful for  $m>0$.
\end{remark}


It is noted that the results of  global well-posedness  of the Cauchy problem for \eqref{main}  and the orbital stability  form a necessary step in our study of asymptotic stability, thereby providing a rigorous framework for  investigating the large time dynamical behavior of the solutions. Before accurately describing orbital stability and global well-posedness, we should provide an appropriate function space. In fact, the second derivative of $-E+kF_2$ needs to be discussed during the process of proving orbital stability. For smooth $m$ and $v$, the second variation of $-E+kF_2$ is given by
$$\delta^2\left(-E+kF_2\right)(m,v)=k\int_{\mathbb{R}}\left(T_1v_x^2+T_2v^2\right)\,\mathrm{d}x,$$
where
\begin{eqnarray}
    k&=&(A\frac{1-b}{2})^{\frac{1}{b}+1},\quad T_1=\frac{2}{b^2}m^{-\frac{1}{b}-2},\label{Lag multiplier}\\
    T_2&=&\frac{m^{-\frac{1}{b}-4}}{b^4}\left(2 b(1+2 b) m_{x x} m-(1+2 b)(1+3 b) m_x^2+b^2(1+b) m^2\right).\nonumber
\end{eqnarray}
The second variation of $-E+kF_2$ near $m=Q$ is
$$\delta^2\left(-E+kF_2\right)(Q,v)=\frac{2k}{b^2}\int_{\mathbb{R}}Q^{-\frac{1}{b}-2}\left( v_x^2-b(b+1)v^2\right)\,\mathrm{d}x.$$
In accordance with the formulation of second variation at $m=Q$, we define the weight
\begin{equation}\label{wei}
    \alpha:=Q^{-\frac{1}{b}-2}=\left(A \frac{1-b}{2}\right)^{-\frac{1}{b}-2}\left(\cosh \nu\left(x-x_*\right)\right)^{-\frac{2 b+1}{\nu}},
\end{equation}
which generates the weighted Sobolev space $H_{\alpha}^1:=H^1\left(\mathbb{R};\alpha\,\mathrm{d}x\right)$ with the inner product
$$\left(u,v\right)_{\alpha}=\int_{\mathbb{R}}u v\alpha\,\mathrm{d}x,~~~\left\langle u,v\right\rangle_{\alpha}=\int_{\mathbb{R}}\left(uv+u_xv_x\right)\alpha\,\mathrm{d}x.$$
It is obvious that $\alpha$ is  bounded uniformly away from zero and $H_{\alpha}^1$ is a subspace of Hilbert space $H^1.$  The space defined in \cite{HL}
$$\mathcal{Z}:=\left\{f \in H_\alpha^1 (\mathbb{R})\mid f=O\left(Q\right) \quad \text { as } \quad|x| \rightarrow \infty\right\} \subset H_{\alpha}^1,$$
was taken into account such that $F_2$ exists. The norm of $\mathcal{Z}$ is defined by $\|f\|_{\mathcal{Z}}=\|f\|_{H^1_{\alpha}}+K_{_{\mathcal{Z}}},$
where $\|f\|_{H^1_{\alpha}}=\sqrt{\langle f,f\rangle_{\alpha}}$ and
$$K_{_{\mathcal{Z}}}=\sup_{x\in\mathbb{R}}\frac{|f(x)|}{Q(x)}.$$
In the framework of the weighted space, the $b-$family equation \eqref{bfe} is known, \cite{HL}, to be globally well-posed in $\mathcal{Z}$ with any positive initial data $m(\cdot,0)\in \mathcal{Z}$ and $F_2<\infty$.
The global well-posedness of the cauchy problem to \eqref{main} in $H^s(\mathbb{R}),~s>\frac32$ and nonnegative $m_0=(1-\partial_x^2)u_0 $ was proved in \cite{EY,CE}. In addition, they also established a unique global
weak solution $u\in W^{1,\infty}(\mathbb{R}_+,\mathbb{R})\cap L^{\infty}(\mathbb{R}_+,H^1(\mathbb{R}))$ such that $u(\cdot,t)-u_{xx}(\cdot,t)\in\mathrm{M}_+,$ where $\mathrm{M}_+$ is the set of non-negative finite Radon measures on $\mathbb{R}$.

The purpose of the present paper is to establish the asymptotic stability of the smooth stationary solutions in the $b$-family of equations for any $b<-1$. Spectral analysis of linearized operators around the leftons is essential for proving both orbital stability and asymptotic stability. In the following, we denote by the operator $\mathcal{L}$ which is related to the linearization of \eqref{main} around the leftons \eqref{ul} and \eqref{ml}, where
\begin{equation}\label{ol}
 \mathcal{L}:=\frac{\delta^2(-E+kF_2)}{\delta m^2}(Q)=k\left(-\partial_x\left(T_0\partial_x\right)+H_0\right),
\end{equation}
where $T_0=\frac{2\alpha}{b^2},~H_0=-\frac{2(b+1)\alpha}{b}.$ It was shown in \cite{HL} that the operator $\mathcal{L}$ defined in \eqref{ol} has only one negative eigenvalue, its kernel is one dimensional, and the continuous spectrum of which is bounded below away from zero on the $H_{\alpha}^1$.
Moreover, the eigenvalue problem associated with $\mathcal{L}$ is $\mathcal{L}f=\lambda\alpha f,$ where $\lambda$ is the eigenvalue corresponding to the eigenfunction $f.$
Indeed, the spectral property of the operator $\mathcal{L}$ is significant for us to resolve asymptotic stability. Thanks to the change of variables $\tilde{f}=\sqrt{\alpha}f,$ the eigenvalue problem associated to $\mathcal{L}$ is rewritten as
\begin{equation}\label{epn}
    H\tilde{f}=\lambda\tilde{f},\quad H=\frac{2k}{b^2}\left(-\partial_x^2+\tilde{H}_0\right),
\end{equation}
where $$\tilde{H}_0=\frac{1 }{2\sqrt{\alpha}}\left(\frac{\alpha_x}{\sqrt{\alpha}}\right)_x-2b(b+1) =\frac{1}{4}-\frac{b(1+2b)}{4k}Q^{\frac{1}{b}+1}=\frac{1}{4}-\frac{b(1+2b)}{4}\mathrm{sech}^2\nu(x-x_*).$$
It is clear that the operator $H$ is similar to the linearized operator of the gKdV equation.
The properties of the operator $\mathcal{L}$ can be verified by organizing the properties of the operator $H$ and the change of variables. Recall that $H$ is a classical operator (see Titchmarsh \cite{T46}),  we then collect the following facts about the operator $H$.
\begin{lemma}\label{SPH1}
 The operator $H$ satisfies the following  properties in $H_{\alpha}^1$:
\begin{itemize}
  \item[(1)]  $H$ is self-adjoint and
\begin{equation}\label{cs}
\sigma_{e s s}(H)=\left[\frac{k}{2b^2},+\infty\right).
\end{equation}
 \item[(2)] The kernel of $H$ is one-dimensional and spanned by
 \begin{equation}\label{ker}
   \operatorname{span}\left\{\sqrt{\alpha}\partial_{x} Q\right\}.
 \end{equation}
\item[(3)]  $H$ has exactly one simple negative  eigenvalue $\lambda_0=-k\left(\frac12-\frac{1}{2b^2}\right)$ associated to the eigenfunction $\sqrt{\alpha}Q^{\frac{1}{2b}+\frac{3}{2}}.$
\item[(4)]  \begin{equation}\label{ef1}
H\left(\sqrt{\alpha}Q\right)=-\frac{(1+b)}{b}\sqrt{\alpha}Q^{\frac{1}{b}+2}=-\frac{(1+b)}{b}\sqrt{\alpha}\alpha^{-1}.
 \end{equation}
\item[(5)]  \begin{equation}\label{ef2}
H\left(\sqrt{\alpha}Q^{2}\right)=\frac{2k(1-b)}{b}\sqrt{\alpha}Q^{2}+\frac{2(b-1)}{b}\sqrt{\alpha}Q^{\frac{1}{b}+3}.
\end{equation}
\item[(6)]  \begin{equation}\label{ef3}
H\left(\frac{2b}{b+1}\sqrt{\alpha}Q+x\sqrt{\alpha}\partial_xQ\right)=\frac{2}{b}\sqrt{\alpha}Q.
\end{equation}
    \end{itemize}
\end{lemma}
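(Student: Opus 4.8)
The plan is to recognize the operator $H$ in \eqref{epn} as a rescaled, shifted P\"oschl--Teller Schr\"odinger operator, read off (1)--(3) from the classical theory of such operators together with two explicit substitutions, and obtain (4)--(6) by direct computation; everything is transferred to the weighted setting through the unitary change of variables $f\mapsto\sqrt{\alpha}\,f$, which is an isometry of $L^2(\mathbb{R};\alpha\,\mathrm{d}x)$ onto $L^2(\mathbb{R})$ intertwining the generalized eigenvalue problem $\mathcal{L}f=\lambda\alpha f$ with $H\tilde f=\lambda\tilde f$. For (1), write $H=\tfrac{2k}{b^2}(-\partial_x^2+V)$ with $V(x)=\tfrac14-\tfrac{b(1+2b)}{4}\,\mathrm{sech}^2\nu(x-x_*)$. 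Since $V-\tfrac14$ is smooth, bounded, and decays exponentially as $|x|\to\infty$, it is a relatively compact perturbation of $-\partial_x^2$: the Kato--Rellich theorem gives self-adjointness of $-\partial_x^2+V$ on $H^2(\mathbb{R})$, and Weyl's theorem on the essential spectrum gives $\sigma_{ess}(-\partial_x^2+V)=[\tfrac14,\infty)$; multiplying by the positive constant $\tfrac{2k}{b^2}$ (here $k=(A\tfrac{1-b}{2})^{\frac1b+1}>0$, since $A>0$, $1-b>0$ and $\tfrac1b+1=\tfrac{b+1}{b}>0$ for $b<-1$) yields \eqref{cs}.

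For (2)--(3) the key point is that the two lowest eigenfunctions are explicit. Writing $w=\cosh\nu(x-x_*)$ and using $Q=A\tfrac{1-b}{2}\,w^{b/\nu}$, $\alpha=(A\tfrac{1-b}{2})^{-\frac1b-2}w^{-(2b+1)/\nu}$ and $\nu=-\tfrac{b+1}{2}$, a direct substitution shows that $\sqrt{\alpha}\,Q^{\frac1{2b}+\frac32}$ is proportional to $w^{b/(2\nu)}$ — hence strictly positive and exponentially decaying — and satisfies $(-\partial_x^2+V)\psi=\tfrac{1-b^2}{4}\psi$; multiplying by $\tfrac{2k}{b^2}$ gives $H(\sqrt{\alpha}\,Q^{\frac1{2b}+\frac32})=\lambda_0\sqrt{\alpha}\,Q^{\frac1{2b}+\frac32}$ with $\lambda_0=-k(\tfrac12-\tfrac1{2b^2})<0$. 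Being positive, this eigenfunction is the ground state, so $\lambda_0$ is simple and equals $\inf\sigma(H)$. Next, differentiating the stationary lefton equation $q-q_{xx}=\tfrac{1-b}{2}A^{b+1}q^{-b}$ in $x$ (equivalently, substituting $\sqrt{\alpha}\,\partial_xQ\propto\sinh\nu(x-x_*)\,w^{-b/(b+1)}$) shows $\sqrt{\alpha}\,\partial_xQ\in\ker H$; this function has exactly one, simple zero, so by Sturm--Liouville oscillation theory it is the first excited state, with eigenvalue $0$. Hence there is no eigenvalue of $H$ in $(\lambda_0,0)$ and any further discrete eigenvalue is strictly positive, which gives (3); and since $V(x)\to\tfrac14>0$ as $|x|\to\infty$, $-\partial_x^2+V$ is in the limit-point case at $\pm\infty$, so at most one solution of $(-\partial_x^2+V)\psi=0$ is square-integrable, which gives (2).

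For (4)--(6) I would use the conjugation identity $H(\sqrt{\alpha}\,g)=\alpha^{-1/2}\mathcal{L}g$ with $\mathcal{L}g=k\big(-\partial_x(\tfrac{2\alpha}{b^2}g_x)-\tfrac{2(b+1)}{b}\alpha g\big)$, together with the explicit forms of $Q$ and $\alpha$ and the relations $\alpha=Q^{-\frac1b-2}$, $q^bQ=\tfrac{1-b}{2}A^{b+1}$; after substitution each identity collapses to an elementary polynomial identity in $w$ and $\sinh\nu(x-x_*)$. For (4) there is a structural shortcut: since $E=\int m$ is linear, $\mathcal{L}=k\,\delta^2F_2(Q)$, and $F_2$ is homogeneous of degree $-\tfrac1b$ in $m$, so Euler's relation gives $\delta^2F_2(Q)[Q]=(-\tfrac1b-1)\,\delta F_2(Q)=-\tfrac{1+b}{bk}$ (using $\delta F_2(Q)=\tfrac1k$, the defining property of the multiplier $k$); hence $\mathcal{L}Q=-\tfrac{1+b}{b}$ is constant and $H(\sqrt{\alpha}\,Q)=-\tfrac{1+b}{b}\,\alpha^{-1/2}=-\tfrac{1+b}{b}\sqrt{\alpha}\,Q^{\frac1b+2}$. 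For (6), writing $\psi=\sqrt{\alpha}\,\partial_xQ\in\ker H$ and using $(-\partial_x^2+V)(x\psi)=-2\psi'$, one gets $H(x\sqrt{\alpha}\,\partial_xQ)=-\tfrac{4k}{b^2}\partial_x(\sqrt{\alpha}\,\partial_xQ)$, and combining this with (4) reduces (6) to a single elementary identity; (5) is a direct computation of the same kind.

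The main obstacle is the spectral count in (2)--(3): one must rule out any hidden eigenvalue — in particular, show $\ker H$ is exactly one-dimensional and $\lambda_0$ is the unique negative eigenvalue — which requires combining the exact identification of the two explicit low-lying eigenfunctions with Sturm--Liouville oscillation theory and the limit-point behaviour at $\pm\infty$, all while keeping track of the fact that the similarity transform $f\mapsto\sqrt{\alpha}\,f$ preserves self-adjointness and the spectrum even though $\alpha$ is only bounded below and grows at infinity. (For $-2<b<-1$ the operator $-\partial_x^2+V$ in fact has one additional, positive eigenvalue in $(0,\tfrac14)$, which is consistent with the statement.) By contrast, the verifications of (4)--(6) are lengthy but entirely mechanical.
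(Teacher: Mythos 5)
Your proposal is correct, and it is essentially the argument the paper leaves implicit: the paper gives no proof of this lemma at all, merely remarking that after the substitution $\tilde f=\sqrt{\alpha}f$ the operator $H$ is a classical Schr\"odinger operator (citing Titchmarsh and the analogy with the linearized gKdV operator, with the corresponding facts about $\mathcal{L}$ attributed to Hone--Lafortune). Your treatment --- Kato--Rellich plus Weyl for (1), the explicit ground state $\sqrt{\alpha}\,Q^{\frac1{2b}+\frac32}=Q^{1/2}\propto\cosh^{-b/(b+1)}\nu(x-x_*)$ with $(-\partial_x^2+V)Q^{1/2}=\tfrac{1-b^2}{4}Q^{1/2}$, the zero mode $\sqrt{\alpha}\,\partial_xQ$ with a single simple zero, Sturm oscillation and the limit-point/Wronskian argument for the exact spectral count in (2)--(3), and the conjugation identity $H(\sqrt{\alpha}g)=\alpha^{-1/2}\mathcal{L}g$ for (4)--(6) --- checks out, including the Euler-relation shortcut ($F_2$ is homogeneous of degree $-\tfrac1b$ and $\delta F_2(Q)=1/k$, so $\mathcal{L}Q=-\tfrac{1+b}{b}$) and the parenthetical about an additional positive eigenvalue exactly when $-2<b<-1$ (the P\"oschl--Teller parameter is $s=\tfrac{b}{b+1}$, which exceeds $2$ precisely there).

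One point you should flag if you finish the computation for (6): with your own reduction, $H(x\sqrt{\alpha}\,\partial_xQ)=-\tfrac{4k}{b^2}\partial_x(\sqrt{\alpha}\,\partial_xQ)$ and $\partial_x(\sqrt{\alpha}\,\partial_xQ)=\sqrt{\alpha}\,Q\left(-\tfrac{b}{2}-\tfrac{b^2}{2k}Q^{\frac1b+1}\right)$, so combining with (4) gives $H\left(\tfrac{2b}{b+1}\sqrt{\alpha}Q+x\sqrt{\alpha}\partial_xQ\right)=\tfrac{2k}{b}\sqrt{\alpha}Q$; equivalently $\mathcal{L}\left(\tfrac{2b}{b+1}Q+xQ'\right)=\tfrac{2k}{b}\alpha Q$. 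Thus the right-hand side of \eqref{ef3} as printed is missing a factor $k$ (it is exact only if $k=1$), whereas (1)--(5) hold as stated (in particular $\mathcal{L}(Q^2)=SQ$, consistent with its use in Section~\ref{s2}). This is a typo in the statement rather than a defect of your method, but since you assert (6) verbatim you would want to note the discrepancy rather than reproduce it.
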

It is evident that we benefit significantly from the change of variables $\tilde{f}=\sqrt{\alpha}f$, which allows the properties of $H$ to be generalized to operator $\mathcal{L}$.
It is easy to verify the spectral property of the operator $\mathcal{L}$ and we can obtain that $\mathcal{L}$ has exactly one simple negative  eigenvalue $-k\left(\frac12-\frac{1}{2b^2}\right)$ associated with the eigenfunction $ Q^{\frac{1}{2b}+\frac{3}{2}}$, that is,
\begin{equation}\label{efl}
\mathcal{L}\left(Q^{\frac{1}{2b}+\frac{3}{2}}\right)=-k\left(\frac12-\frac{1}{2b^2}\right)\alpha Q^{\frac{1}{2b}+\frac{3}{2}}.
\end{equation}
Moreover, the kernel of the operator $\mathcal{L}$ is one dimensional and spanned by the function $\partial_xQ.$

$Notation.$ Throughout this paper, we denote by $a\lesssim b$ the relation that $a\leq Cb$ for some positive constant $C.$

\subsection{Statement of the main results}
The orbital stability of leftons in $H^1(\mathbb{R})$ with $b<-1$ is given in the following sense.
\begin{proposition}[\cite{HL}]\label{os}
Let $b<-1$. There exist $C > 0 $ and $ \epsilon_0>0$ such that if $ m_0(x) = (1 - \partial_x^2) u_0(x) > 0, \forall x \in \mathbb{R},$ $ F_2 < \infty $ and $m_0 \in \mathcal{Z}$ with
$\left\|m_0-Q\right\|_{\mathcal{Z}}\leq\epsilon\leq\epsilon_0,$ then the corresponding solution $m$ of \eqref{bfe} with the initial value $m_0$ satisfies
$$\sup_{t \in \mathbb{R}}\inf_{y\in\mathbb{R}}\left\|m(\cdot,t)-Q(\cdot-y)\right\|_{H^1(\mathbb{R})}\leq C\epsilon.$$
\end{proposition}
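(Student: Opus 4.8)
\medskip
\noindent\emph{Proof strategy.} The plan is to reconstruct the Grillakis--Shatah--Strauss / Weinstein Lyapunov argument in the weighted setting, using the two conserved quantities $E$ and $F_2$ together with the spectral data for $\mathcal L$ recorded above. First I would fix the amplitude: since $E(Q_A)=A\,\tfrac{1-b}{2}\int_{\mathbb R}(\cosh\nu x)^{b/\nu}\,dx$ is a strictly increasing linear function of $A>0$, and $\int_{\mathbb R}\alpha^{-1}\,dx=\int_{\mathbb R}Q^{1/b+2}\,dx<\infty$ for $b<-1$, there is a unique $A^*>0$ with $E(Q_{A^*})=E(m_0)$, and $|A^*-A|\lesssim\bigl|\int_{\mathbb R}(m_0-Q)\,dx\bigr|\lesssim\|m_0-Q\|_{\mathcal Z}\le\epsilon$. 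Next, the implicit function theorem yields a $C^1$ curve $t\mapsto y(t)$ for which $\eta(t,\cdot):=m(t,\cdot+y(t))-Q_{A^*}$ satisfies the orthogonality $(\eta(t),\partial_xQ_{A^*})_\alpha=0$; this is solvable because $(\partial_xQ_{A^*},\partial_xQ_{A^*})_\alpha\neq0$, and it persists as long as $\eta$ stays small. Since $E$ is conserved along \eqref{bfe} and translation invariant, $\int_{\mathbb R}\eta(t,\cdot)\,dx=E(m_0)-E(Q_{A^*})=0$ for all $t$. Thus $\eta$ is subject to the two scalar constraints $\int_{\mathbb R}\eta\,dx=0$ and $(\eta,\partial_xQ_{A^*})_\alpha=0$.

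The second step is coercivity. Write $\mathcal L=\mathcal L_{A^*}$ for the operator \eqref{ol} linearised around $Q_{A^*}$. By \eqref{ol}--\eqref{efl} it has exactly one simple negative eigenvalue, one-dimensional kernel $\operatorname{span}\{\partial_xQ_{A^*}\}$, and continuous spectrum bounded below away from zero; moreover, the $\mathcal L$-form of \eqref{ef1} reads $\mathcal L Q_{A^*}=-\tfrac{1+b}{b}\,\mathbf 1$, where $\mathbf 1=E'(Q_{A^*})$ is the constant function, so the slope (Vakhitov--Kolokolov) quantity equals $-\tfrac{b}{1+b}\,E(Q_{A^*})$, which is strictly negative for $b<-1$. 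The standard codimension-two coercivity lemma then gives a constant $c>0$ with $\langle\mathcal L\eta,\eta\rangle\ge c\,\|\eta\|_{H^1_\alpha}^2$ for every $\eta$ subject to the two constraints above, the $L^2_\alpha$-gap being upgraded to an $H^1_\alpha$-gap via $T_0=\tfrac{2}{b^2}\alpha\gtrsim\alpha$. Now set $\mathcal E_{k^*}:=-E+k^*F_2$ with $k^*=(A^*\tfrac{1-b}{2})^{1/b+1}$; this functional is conserved and translation invariant and has $Q_{A^*}$ as a critical point, so
\[
\mathcal E_{k^*}\bigl(m(t)\bigr)-\mathcal E_{k^*}(Q_{A^*})=\tfrac12\langle\mathcal L\eta(t),\eta(t)\rangle+N\bigl(\eta(t)\bigr),
\]
where $N$ gathers the cubic-and-higher Taylor remainder of $F_2$ and satisfies $|N(\eta)|\le\omega(\|\eta\|_{\mathcal Z})\,\|\eta\|_{\mathcal Z}^2$ with $\omega(s)\to0$ as $s\to0$. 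Expanding also at $t=0$ and using conservation, the left-hand side equals $\mathcal E_{k^*}(m_0)-\mathcal E_{k^*}(Q_{A^*})\lesssim\|m_0-Q_{A^*}\|_{\mathcal Z}^2\lesssim\epsilon^2$. Combining this with coercivity, a continuity/bootstrap argument — the set of times with $\|\eta(t)\|_{\mathcal Z}\le\delta$ being nonempty, open and closed once $\delta$ is small and $\epsilon\ll\delta$ — absorbs $N$ and produces $\|\eta(t)\|_{H^1_\alpha}\lesssim\epsilon$ for all $t$. Since $\alpha$ is bounded below we have $H^1_\alpha\hookrightarrow H^1$, hence
\[
\|m(t,\cdot)-Q(\cdot-y(t))\|_{H^1}\lesssim\|\eta(t)\|_{H^1_\alpha}+\|Q_{A^*}-Q\|_{H^1}\lesssim\epsilon,
\]
which is the claim.

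I expect two points to be the real work. The first is the coercivity estimate itself: one must check the sign of the slope quantity from the explicit formulas (a bare codimension-two argument is too lossy without it) and turn the spectral facts for $H$ in Lemma~\ref{SPH1} into the quantitative $H^1_\alpha$ lower bound for $\mathcal L$. The second, and more delicate, is the control of the remainder $N(\eta)$: because $Q$ decays to zero at infinity, the power nonlinearity $m^{-1/b}$ and the term $m_x^2/(b^2m^2)$ in $F_2$ are only well-behaved on the scale where $m\asymp Q$, so along the bootstrap one has to propagate smallness of the \emph{full} $\mathcal Z$-norm of $\eta$ — equivalently of $\sup_x|\eta(t,x)|/Q(x)$ — and not merely of $\|\eta\|_{H^1_\alpha}$; this is exactly where positivity of the momentum density and finiteness of $F_2$ enter, and it is what forces the analysis into the space $\mathcal Z$.
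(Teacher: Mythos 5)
First, a point of reference: the paper does not prove Proposition \ref{os} at all — it is imported from \cite{HL}, and the only trace of the underlying argument in this paper is the remark following it, which records that the proof in \cite{HL} is based on the decomposition $\varepsilon=m(\cdot+s(m))-(1+a)Q$, i.e.\ modulation in \emph{both} translation and amplitude, with the two orthogonality conditions $(\varepsilon,\partial_xQ)_{\alpha}=(\varepsilon,F_2'(Q))_{\alpha}=0$ taken in the weighted inner product. Your scheme belongs to the same Grillakis--Shatah--Strauss/Weinstein family, but the implementation differs: you freeze the amplitude by matching $E(Q_{A^*})=E(m_0)$ and replace the second orthogonality condition by the zero-mean constraint $\int\eta\,\mathrm{d}x=0$, handling the negative direction through the slope computation $\mathcal{L}^{-1}\mathbf{1}=-\tfrac{b}{1+b}Q$ and $\int\mathcal{L}^{-1}\mathbf{1}\,\mathrm{d}x<0$. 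That part is coherent (the constraint functional is continuous on $H^1_\alpha$ because $\int\alpha^{-1}\,\mathrm{d}x<\infty$, and the sign check is correct for $b<-1$), and it is a legitimate variant of the modulation used in \cite{HL}; note, however, that the paper's later analysis (the parameter $\gamma=1+a$ in Theorem \ref{as} and the orthogonality conditions \eqref{voc} reused in Lemma \ref{zlem3.1}) is tailored to the \cite{HL} decomposition, so your variant would not plug directly into the rest of the paper.

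The genuine gap is the one you yourself flag but do not close: the bootstrap. Your coercivity step controls only $\|\eta\|_{H^1_\alpha}$, while the Taylor remainder $N(\eta)$ of $-E+k^*F_2$ involves negative powers of $m$ (terms like $m^{-1/b-3}$, $m^{-1/b-4}m_x^2$ at intermediate points), whose control requires pointwise comparability of $m$ with $Q$, i.e.\ smallness of $\sup_x|\eta(x)|/Q(x)$ — the $K_{\mathcal{Z}}$ part of the $\mathcal{Z}$-norm. An $H^1_\alpha$ bound does not give this: the weighted Sobolev embedding only yields $|\eta(x)|\lesssim\epsilon\,Q^{1+\frac{1}{2b}}(x)$, which is far larger than $Q(x)$ in the tails since $1+\frac{1}{2b}<1$. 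Consequently the continuity argument as stated ("the set of times with $\|\eta(t)\|_{\mathcal{Z}}\le\delta$ is open and closed") does not close: closedness would require recovering a strictly smaller $\mathcal{Z}$-bound from the conserved quantities, but your Lyapunov inequality returns only an $H^1_\alpha$ bound, and $\mathcal{Z}$-closeness is assumed only at $t=0$. Fixing this — e.g.\ by exploiting positivity of $m$, conservation of $F_2$ and $E$, or the structure of the flow in $\mathcal{Z}$ as in \cite{HL} — is precisely the missing step, so as written the proposal is an outline of the standard strategy rather than a complete proof.
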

\begin{remark}
    There is a parameter $a\in\mathbb{R}$ appears in the steps proving stability. In fact, the decomposition $\varepsilon=m(\cdot+s(m))-(1+a)Q$ was discussed in \cite{HL} such that $\varepsilon\in H_{\alpha}^1(\mathbb{R})$ satisfying $$\left(\varepsilon,\partial_xQ\right)_{\alpha}=(\varepsilon,F_2'(Q))_{\alpha}=0.$$ 
\end{remark}

We now present our main result on the asymptotic stability of the leftons.

\begin{theorem}\label{as}
Let $b<-1$ and $m_0\in \mathcal{Z}$ with  $ m_0(x) = (1 - \partial_x^2) u_0(x) > 0, \forall x \in \mathbb{R}$ and $ F_2 < \infty $ and let $m$ be the unique global solution of the Cauchy problem associated with $b$-family of equations \eqref{bfe} emanating from initial data $m_0$. There exists $\epsilon_0>0$ such that, if $0<\epsilon<\epsilon_0$ and
\begin{align}\label{osa}
\left\|m_0-Q\right\|_{\mathcal{Z}}\leq\epsilon,
\end{align}
then there exists a function $ \rho(t)\in C^1(\mathbb{R})$ with $\lim_{t\to +\infty}\rho^{\prime}(t)=0$ satisfying
\begin{align}\label{wc}
&m(\cdot,t) \rightharpoonup \gamma Q\left(\cdot-\rho(t)\right) \quad \text { in }\quad H^1(\mathbb{R}),\ \gamma=1+a,\ a\in\R,
\end{align}
as $t \rightarrow+\infty$. Moreover, it also holds that
\begin{equation}\label{asympt lim}
\lim_{t\rightarrow+\infty}\left\|m(\cdot,t)-\gamma Q(\cdot-\rho(t))\right\|_{H^1(x>\beta t)}=0,
\end{equation}
for some $\beta>0$.
\end{theorem}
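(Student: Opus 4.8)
The plan is to follow the Martel--Merle strategy for asymptotic stability of solitons, adapted to the nonlocal transport equation \eqref{bfe} and the weighted space $\mathcal{Z}$. The first step is the \emph{modulation decomposition}: using the orbital stability of Proposition~\ref{os}, for $\epsilon$ small there is a $C^1$ modulation parameter $\rho(t)$ and a scaling parameter $\gamma(t)=1+a(t)$ so that $m(t,\cdot+\rho(t)) = \gamma(t)Q + \varepsilon(t)$ with $\varepsilon(t)$ small in $H^1_\alpha$ and satisfying the two orthogonality conditions $(\varepsilon(t),\partial_xQ)_\alpha = (\varepsilon(t),F_2'(Q))_\alpha = 0$, exactly as in the Remark following Proposition~\ref{os}. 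Differentiating these orthogonality relations along the flow yields modulation ODEs controlling $\rho'(t)-c_*$ (here $c_*=0$ since the lefton is stationary) and $a'(t)$, showing both are quadratically small in $\|\varepsilon(t)\|$; crucially the spectral facts in Lemma~\ref{SPH1}, in particular \eqref{ef1}--\eqref{ef3} and the one-dimensionality of $\ker\mathcal{L}=\operatorname{span}\{\partial_xQ\}$, guarantee the relevant $2\times2$ matrix of inner products is invertible so the modulation system is solvable.

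The heart of the proof is a \emph{nonlinear Liouville property} (the rigidity theorem): if $\tilde m$ is a solution of \eqref{bfe} that stays $\epsilon$-close to the lefton orbit for all $t\in\mathbb{R}$ and is, in addition, uniformly almost localized (i.e.\ $Q$-like tails uniformly in $t$, after recentering by its modulation parameter), then $\tilde m(t,\cdot) = \gamma_\infty Q(\cdot - \rho_\infty)$ for constants $\gamma_\infty,\rho_\infty$. To prove this I would pass to the limiting object obtained from a sequence of time-translates, exploit the three conservation laws \eqref{enc} (the scalar $E$ and the functional $F_2$, which is finite and coercive on $\mathcal{Z}$ by construction of the weight $\alpha$) to control the limit, and derive a virial-type / monotonicity estimate for a suitable localized version of the momentum or of $F_2$. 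Differentiating this localized functional in time produces, modulo lower-order terms, a quadratic form in $\varepsilon$ governed by $\mathcal{L}$ (equivalently by $H$ after $\tilde f = \sqrt\alpha f$); the coercivity of $\mathcal{L}$ on the orthogonal complement of its negative direction and kernel — available from the spectral description in Lemma~\ref{SPH1}, once the bad negative direction is killed by the conservation of $F_2$ and the orthogonality to $\partial_xQ$ — forces $\varepsilon\equiv 0$, hence rigidity. This is the step I expect to be the main obstacle: unlike gKdV, the operator acting on $\varepsilon$ in the virial identity is \emph{nonlocal} (it involves $p*\,\cdot$ through $u=p*m$ and the operator $\mathcal{B}$ of \eqref{ob}), so the monotonicity computation must be reorganized so that the nonlocal terms are either sign-definite or controllable by the weighted norm; the sech-type weight $\alpha$ in \eqref{wei} decays, which both helps (localization is built in) and hurts (the quadratic form must remain coercive against a decaying weight), and checking that $b<-1$ gives the right sign in $\tilde H_0$ and in the virial multiplier \eqref{ef3} is delicate.

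With the Liouville property in hand, the asymptotic stability follows by the standard compactness-and-limit argument. From the modulation estimates, $\rho'(t)$ and $a'(t)$ are integrable-in-the-tail-sense quantities, and one first shows $\varepsilon(t)\rightharpoonup 0$ weakly in $H^1$ along any sequence $t_n\to+\infty$: extracting a weak limit $m(t_n,\cdot+\rho(t_n))\rightharpoonup \gamma_\infty Q + \varepsilon_\infty$, running the flow both forward and backward from this limit, using weak continuity of the flow and the uniform closeness to the lefton orbit to see the limiting solution is uniformly almost localized, the Liouville theorem forces $\varepsilon_\infty\equiv 0$. Since the limit is independent of the subsequence, $m(t,\cdot)\rightharpoonup \gamma Q(\cdot-\rho(t))$ with $\gamma=\lim\gamma(t)=1+a$, and $\rho'(t)\to 0$, which is \eqref{wc}. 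For the strong convergence \eqref{asympt lim} on the moving half-line $\{x>\beta t\}$, I would prove an almost-monotonicity estimate for a translated localized mass $\int \psi(x-\beta t)\,m(t,x)\,\mathrm{d}x$ (with $\psi$ a smooth step, $0<\beta$ small): the transport structure of \eqref{bfe} and the exponential decay of $Q$ give that the flux through the moving front is, up to exponentially small and quadratic errors, one-signed, so the mass to the right of $\beta t$ decays; combining this localized mass decay with the already-established weak convergence and the conservation of $E$ and $F_2$ upgrades the convergence to strong $H^1(x>\beta t)$ convergence, completing the proof. The one routine-but-necessary check here is that all the error terms generated by the nonlocality of $u=p*m$ are exponentially localized near the front and hence harmless; I will not carry out those estimates in detail.
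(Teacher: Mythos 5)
Your overall architecture (modulation, rigidity/Liouville, compactness of the weak limit along $t_n\to\infty$, monotonicity for the half-line statement) is the same Martel--Merle route the paper follows, but two steps as you propose them would not go through. First, you claim the limiting solution is ``uniformly almost localized'' as a consequence of ``weak continuity of the flow and the uniform closeness to the lefton orbit.'' Closeness to the lefton orbit in $H^1$ gives no tail decay whatsoever, and this is precisely the hard part of the compactness step: the paper must prove that the limit object $m^*$ satisfies the $L^{-\frac1b}$-localization hypothesis \eqref{lcuv} of Theorem \ref{nlt}, and it does so by running almost-monotonicity estimates for the localized $F_2$-type density $m^{-\frac1b}+b^{-2}m^{-\frac1b-2}m_x^2$ on the original solution (Lemma \ref{male}), transferring them to $m^*$ by weak lower semicontinuity and strong local convergence (Lemmas \ref{dmr}, \ref{zlem4.3}, \ref{cwm}), and, crucially, handling the left tail by a reversed (backward-in-time) monotonicity (Lemma \ref{tim1}) combined with $F_2(m_0^*)<\infty$ (Lemmas \ref{lem4.5}, \ref{L3}). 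None of this is replaced by weak continuity of the flow; without it the Liouville theorem cannot be applied to the limit, so this is a genuine missing step, not a routine check. Note also that the relevant localization is of $m^{-1/b}$, not of $Q$-like $H^1$ tails, which is why the monotone quantities are built from the $F_1/F_2$ densities rather than from the mass or energy.

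Second, for \eqref{asympt lim} your moving-front functional $\int\psi(x-\beta t)\,m\,\mathrm{d}x$ only controls the localized mass; it gives no control of $m_x$ on $\{x>\beta t\}$, so decay of this quantity plus global conservation of $E$ and $F_2$ does not upgrade weak convergence to strong $H^1(x>\beta t)$ convergence. The paper instead integrates the monotonicity \eqref{Iuv} for the localized $F_2$-density, whose second term $b^{-2}m^{-\frac1b-2}m_x^2$ controls $m_x^2$ after multiplying by $m^{\frac1b+2}$ (bounded since $\frac1b+2>0$ and $m$ is bounded), and then uses the orthogonality conditions and the weak limit to pass to \eqref{re4}; the speed $\beta$ comes from $-4b>0$ in the weight, not from a small parameter. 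Smaller inaccuracies: the paper modulates only the translation $\rho(t)$ (with $\gamma$ fixed from the orbital stability decomposition), and obtains only $|\rho'(t)|\lesssim\|\varepsilon\|_{H^1}$, not quadratic smallness; and in the rigidity step the negative direction of $\mathcal{L}$ is not removed by ``conservation of $F_2$'' alone but through the special direction $SQ=\mathcal{L}(Q^2)$ together with the sign condition $\int\mathcal{L}^{-1}(SQ)\,SQ\,\mathrm{d}x<0$, which is exactly where the restriction $b<-1$ enters.
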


The structure of the equation in the vicinity of leftons allows for a detailed analysis of the long-time asymptotic behavior of solutions. This analysis relies on refined estimates of the dispersive properties of the linearized operator around the lefton profile. Consequently, a precise understanding of the stability properties of the leftons is essential.

The proof of Theorem \ref{as} is reduced to a rigidity property of
\eqref{bfe} close to the leftons. More precisely, we show that asymptotic stability of leftons is true if
the following nonlinear Liouville-type theorem holds.

\begin{theorem}[Nonlinear Liouville theorem]\label{nlt}
Let $b<-1,$ $m_0\in \mathcal{Z}$, and let $m$ be the unique global solution of the $b$-family of equations \eqref{bfe} emanating from the initial data $m_0 > 0, \forall x \in \mathbb{R} $ with $ F_2 < \infty$. There exists positive numbers $A_0,\epsilon_0>0$ such that, if
$
\left\|m_0-Q\right\|_{\mathcal{Z}}\leq\epsilon\leq\epsilon_0,
$
and the solution $m$ satisfies for a function $ \rho(t)\in C^1(\mathbb{R})$ and some positive constant $\epsilon>0$
\begin{align}\label{lcuv}
\int_{|x|>A_0}m^{-\frac1b}(x+\rho(t),t)\,\mathrm{d}x\leq\epsilon,
\end{align}
then there exists $r\in\mathbb{R}$ such that
\begin{align}\label{nltt}
m(x,t)=\gamma Q(x-r).
\end{align}
\end{theorem}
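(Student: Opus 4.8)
The plan is to follow the Martel--Merle rigidity strategy, adapted to the nonlocal transport equation \eqref{bfe} and the weighted space $H^1_\alpha$. First I would take the solution $m$ satisfying the uniform localization hypothesis \eqref{lcuv} and perform the modulated decomposition furnished by orbital stability: write $m(x+\rho(t),t) = (1+a(t))Q(x) + \varepsilon(x,t)$ with the orthogonality conditions $(\varepsilon,\partial_x Q)_\alpha = (\varepsilon, F_2'(Q))_\alpha = 0$, where $\rho(t)$ is chosen so that the first orthogonality holds. Differentiating these conditions in time and using \eqref{bfe} gives modulation equations controlling $\rho'(t)$ and $a'(t)$ in terms of $\|\varepsilon\|_{H^1_\alpha}$; by the orbital stability proposition $\|\varepsilon(t)\|_{H^1_\alpha} \lesssim \epsilon$ uniformly, and one expects $|\rho'(t)| + |a'(t)| \lesssim \|\varepsilon(t)\|_{H^1_\alpha}$ after absorbing the quadratic terms.

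The core of the argument is a virial/monotonicity estimate. I would introduce a localized weighted functional of the form $\mathcal{I}(t) = \int_{\mathbb{R}} \psi\big(\tfrac{x}{\ell}\big)\, j(m)(x+\rho(t),t)\,\mathrm{d}x$ built from the conserved density $m^{-1/b}$ (the only one of the three in \eqref{enc} that is both finite and meaningful for $b<-1$ with $m>0$, per the Remark), with $\psi$ a bounded increasing cutoff and $\ell$ a large parameter. Using the conservation-law form $m_t + \big(\tfrac{b-1}{2}(u^2-u_x^2)+um\big)_x = 0$ together with the sign of $u$ at the lefton (recall $u=p*m>0$), one derives $\frac{d}{dt}\mathcal{I}(t) \geq \frac{c}{\ell}\int \psi'(\tfrac{x}{\ell})(\cdots)\,\mathrm{d}x - (\text{error})$, i.e. an almost-monotonicity in the right-moving frame; the exponential localization of $Q$ and $\varepsilon = O(Q)$ controls the error terms. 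Integrating in time and using boundedness of $\mathcal{I}$ yields that the "mass" of $\varepsilon$ to the right of $\beta t$ tends to zero, which is precisely the spatially localized decay \eqref{asympt lim} feeding back into the Liouville mechanism.

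With the monotonicity in hand, the rigidity is extracted as follows. Consider the limiting object: along a subsequence $t_n\to+\infty$, $\varepsilon(\cdot,t_n)$ converges weakly to some $\varepsilon_\infty$, and the monotonicity forces $\varepsilon_\infty$ to be supported (in the relevant weighted sense) so that the associated solution of \eqref{bfe} is itself uniformly localized. The quadratic part of the Lyapunov functional $-E+kF_2$ evaluated on $\varepsilon$ is, by \eqref{ol} and Lemma~\ref{SPH1}, the coercive form $\mathcal{L}$ restricted to the orthogonal complement of its negative direction and kernel: $(\mathcal{L}\varepsilon,\varepsilon) \gtrsim \|\varepsilon\|_{H^1_\alpha}^2$ once $\varepsilon\perp_\alpha Q^{\frac{1}{2b}+\frac32}$ and $\varepsilon\perp_\alpha \partial_x Q$, the second orthogonality being arranged by the choice of $a(t)$ via $F_2'(Q)$. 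A linearized Liouville statement---any $H^1_\alpha$ solution of the linearized equation $\partial_t \varepsilon = \partial_x \mathcal{L}\varepsilon$ (or its nonlocal analogue) that stays uniformly localized for all time and satisfies the two orthogonality conditions must vanish---then kills $\varepsilon_\infty$, using the identities \eqref{ef1}--\eqref{ef3} to handle the algebraic relations between $Q$, $\partial_x Q$ and $x\partial_x Q$ that arise when testing the linearized flow against the secular modes. Propagating $\varepsilon_\infty \equiv 0$ backward and forward, combined with uniqueness for \eqref{bfe} in $\mathcal{Z}$, gives $m(x,t) = \gamma Q(x-r)$ for all time, which is \eqref{nltt}.

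The main obstacle I anticipate is the linear Liouville step in the nonlocal, non-self-adjoint setting: unlike gKdV, the evolution operator here is $\partial_x$ composed with the nonlocal $\mathcal{L}$ acting through the Helmholtz inverse $p\ast$, so the usual gKdV computation (differentiating a second virial functional and using the exact spectral identities for $L\partial_x$) must be redone with the substitution $\tilde f = \sqrt\alpha f$ that conjugates $\mathcal{L}$ to the Schr\"odinger-type operator $H$ of \eqref{epn}. One has to show that the extra commutator terms generated by $\alpha$ and by the nonlocality are either lower order or have a definite sign; the identities in Lemma~\ref{SPH1}, especially \eqref{ef3} which encodes the scaling/translation generator $\tfrac{2b}{b+1}\sqrt\alpha Q + x\sqrt\alpha\partial_x Q$, are exactly what is needed to close this, but assembling them into a coercive second virial estimate uniform in $\ell$ is the delicate point. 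A secondary difficulty is ensuring the error terms in the monotonicity formula are genuinely integrable in time, which requires the localization exponent $-\frac{2b+1}{\nu}$ in \eqref{wei} to beat the linear growth of the cutoff speed $\beta t$; this should work precisely in the regime $b<-1$.
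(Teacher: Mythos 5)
Your overall skeleton (modulation near $\gamma Q$, monotonicity built from the conserved density $m^{-1/b}$, then a rigidity step tied to the linearized operator) matches the paper's framework, but the way you close the rigidity step has two genuine gaps. First, the compactness argument as you describe it does not work: you pass to a weak limit $\varepsilon_\infty$ of $\varepsilon(\cdot,t_n)$ along $t_n\to+\infty$ and claim it solves the linearized equation and is killed by a linear Liouville statement. The un-renormalized limit of $\varepsilon$ does not satisfy the linear flow $\partial_t v=\frac{1}{1-b}\mathcal{B}(Q)\mathcal{L}v$ (that only happens after dividing by the size of $\varepsilon$ and letting that size tend to zero, as in the Martel--Merle renormalization), and even granting $\varepsilon_\infty=0$ along one time sequence, this does not propagate to $\varepsilon(t)\equiv 0$ for \emph{all} $t\in\mathbb{R}$, which is what \eqref{nltt} requires. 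The paper avoids the limit object entirely: it proves a direct virial estimate on $\varepsilon$ itself, namely the coercivity \eqref{lwe} for $w=\varepsilon\sqrt{\Phi_M}$ together with the almost-conservation bound \eqref{wlw}, integrates in time to get $\int_{\mathbb{R}}\int\varepsilon^2\Phi_M\,\mathrm{d}x\,\mathrm{d}t<\infty$, and combines this with the uniform-in-time exponential decay $\sup_t\int\varepsilon^2 e^{\delta_0|x|}\,\mathrm{d}x\lesssim 1$ (obtained from the monotonicity run both forward and backward, using $m(-x,-t)$) to force $\varepsilon\equiv 0$ on $\mathbb{R}^2$; the nonlinear terms are absorbed because $\|\varepsilon\|_{H^1_\alpha}\lesssim\epsilon$. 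Your use of the monotonicity only to control mass to the right of $\beta t$ as $t\to\infty$ is the role it plays in the asymptotic stability proof, not the two-sided, uniform-in-time localization needed here.

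Second, your coercivity claim rests on orthogonality conditions that the modulation does not provide. The decomposition arranges $(\varepsilon,\partial_x Q)_\alpha=(\varepsilon,F_2'(Q))_\alpha=0$; it does not make $\varepsilon$ orthogonal to the negative eigenfunction $Q^{\frac{1}{2b}+\frac32}$, and because the weight $\alpha$ in \eqref{wei} grows exponentially, that eigenfunction cannot be used as a test direction anyway. The paper's essential device, which your proposal is missing, is the function $SQ=\mathcal{L}(Q^{2})=\frac{2k(1-b)}{b}Q^{-1/b}+\frac{2(b-1)}{b}Q$, the verification of the spectral condition $\int \mathcal{L}^{-1}(SQ)\,SQ\,\mathrm{d}x<0$ (which holds precisely because $b<-1$, i.e. $k>1$), and the relaxed coercivity lemma allowing only \emph{approximate} orthogonality of $\varepsilon\sqrt{\Phi_M}$ to $SQ$ and $Q'$ (condition \eqref{cpc}). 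Without identifying such a localized test function and checking its spectral condition, the step $(\mathcal{L}w,w)\gtrsim\|w\|_{H^1_\alpha}^2$ is unjustified, and the identities \eqref{ef1}--\eqref{ef3} alone do not substitute for it. These two points --- the direct-in-time virial closure with two-sided exponential decay, and the $SQ$-based coercivity --- are exactly where the proof is won, and they are absent or incorrectly replaced in your proposal.
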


\begin{remark}
We call the solution $m$ of  \eqref{bfe}  $L^{-\frac1b}$-localized if it satisfies the condition \eqref{lcuv}, which is quite different from the CH equation \cite{LM} (called $Y$-almost localized) and gKdV cases \cite{MM3} ($L^{2}$-localized). Compared with the work in  \cite{LM}, which verifies the nonlinear Liouville property of \eqref{bfe} for all $b\geq1$, we  fill the gaps in nonlinear Liouville property of \eqref{bfe} for $b<-1$.
\end{remark}

In order to prove the nonlinear Liouville property (Theorem \ref{nlt}), one needs to
reduce the proof to a similar property on a related linear problem. Indeed, since we want
to prove a result in a neighborhood of the leftons $Q$ by passing to the limit
on renormalized problems, we show that it is sufficient to classify a related linearized
problem. In the following, we prove a linear Liouville theorem for the localized solutions of the linearized $b$-family of equations around the lefton.

\begin{theorem}[Linear Liouville theorem]\label{llt}
Let $b<-1$ and  $v\in C\left(\mathbb{R},H_{\alpha}^1\right)\cap L^{\infty}\left(\mathbb{R},H_{\alpha}^1\right)$ be the solution of
\begin{equation}\label{l2c}
\partial_tv=\frac{1}{1-b}\mathcal{B}(Q)\mathcal{L}v,
\end{equation}
where $\mathcal{B}$ and $\mathcal{L}$ are defined in \eqref{ob} and \eqref{ol}, respectively. If there exists a constant $\epsilon>0$ such that
\begin{align}\label{lceta}
\int_{|x|>A_0}v^2(x,t)\alpha(x)\,\mathrm{d}x\leq\epsilon,
\end{align}
then there exist constants $a_0,b_0\in\mathbb{R}$ such that
\begin{align}\label{lltt}
v(x,t)=a_0\partial_xQ+b_0Q,
\end{align}
for all $t\in\mathbb{R}.$
\end{theorem}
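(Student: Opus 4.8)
The plan is to follow the Martel–Merle virial strategy, transported through the change of variables $\tilde v = \sqrt{\alpha}\,v$ so that the spatial part of $\mathcal L$ becomes the Schrödinger-type operator $H$ of Lemma \ref{SPH1}, whose kernel, negative eigenvalue and essential spectrum are completely explicit. First I would rewrite the linear flow \eqref{l2c} in the $\tilde v$ variable: since $\mathcal B(Q)$ is the fixed skew-symmetric operator \eqref{ob} built on the lefton profile, conjugation by $\sqrt\alpha$ turns \eqref{l2c} into a linear Hamiltonian evolution $\partial_t\tilde v = J H \tilde v$ for an explicit skew operator $J$, with two conserved quantities along the flow: the (indefinite) energy $\langle H\tilde v,\tilde v\rangle$ and the momentum-type quantity coming from $\mathcal B(Q)$ annihilating constants/$\partial_x Q$. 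The hypothesis \eqref{lceta} says exactly that $\tilde v(t)$ stays uniformly $L^2$-localized in $x$ for all $t\in\mathbb R$ (both $t\to+\infty$ and $t\to-\infty$), which is the input that makes the virial argument closed rather than one-sided.

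Next I would decompose $v(t)$ orthogonally in $H^1_\alpha$ against the two-dimensional subspace spanned by $\partial_x Q$ (the kernel of $\mathcal L$, i.e. the generator of spatial translations) and $Q$ (the second secular direction visible in \eqref{ef1}–\eqref{ef3}), writing $v = a_0(t)\partial_x Q + b_0(t) Q + w(t)$ with $w$ in the orthogonal complement; the goal \eqref{lltt} is equivalent to showing $w\equiv0$ and $a_0,b_0$ constant. The modulation equations for $a_0(t),b_0(t)$ follow from projecting \eqref{l2c}, using $\mathcal L\partial_x Q=0$ and the algebraic identities \eqref{ef1}, \eqref{ef3} (which express $H(\sqrt\alpha Q)$ and $H(\frac{2b}{b+1}\sqrt\alpha Q + x\sqrt\alpha\partial_x Q)$ in terms of $\sqrt\alpha Q$ and $\alpha^{-1}\sqrt\alpha$); one expects $\dot b_0 = 0$ outright and $\dot a_0$ controlled by $\|w\|_{H^1_\alpha}$. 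The heart of the matter is a coercivity/monotonicity estimate for $w$: one constructs a virial functional of the form $\mathcal I(t)=\int \psi(x)\,(\text{bilinear in }w)\,\mathrm dx$ with $\psi$ a bounded increasing weight (e.g. $\psi=\tanh(x/R)$ type), computes $\frac{d}{dt}\mathcal I(t)$ along \eqref{l2c}, and shows that modulo the secular directions it is bounded below by $c\int (w_x^2+w^2)\,e^{-|x|/R}\,\mathrm dx$ — this uses that $H$ restricted to the orthogonal complement of $\{\sqrt\alpha\partial_x Q,\ \sqrt\alpha Q^{1/(2b)+3/2}\}$ is positive definite with spectral gap, exactly the content of Lemma \ref{SPH1}(1)–(3). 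Integrating $\frac{d}{dt}\mathcal I$ over $t\in\mathbb R$ and using that $\mathcal I$ is bounded (because $w$ is uniformly localized by \eqref{lceta}) forces $\int_{\mathbb R}\!\!\int (w_x^2+w^2)e^{-|x|/R}\,\mathrm dx\,\mathrm dt = 0$, hence $w\equiv0$; feeding this back into the modulation equations gives $\dot a_0=\dot b_0=0$, i.e. \eqref{lltt}.

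I expect the main obstacle to be the construction of the virial weight and the sign of $\frac{d}{dt}\mathcal I(t)$: unlike gKdV, the flow \eqref{l2c} is \emph{nonlocal} (it contains $(\partial_x-\partial_x^3)^{-1}$ through $\mathcal B$ and the weight $\alpha$ through $\mathcal L$), so commuting $\psi$ past $\mathcal B(Q)\mathcal L$ produces error terms involving the resolvent of the Helmholtz operator acting on $\psi w$ that are not obviously lower order. Controlling these requires exploiting the exponential decay of $Q$ and $\alpha^{\pm1}$ (so that $\mathcal B(Q)$ has an exponentially localized kernel-like structure away from the diagonal) together with elliptic regularity for $(1-\partial_x^2)^{-1}$, in the spirit of the $Y$-localization estimates of \cite{LM}; a second, subtler point is that the conserved energy $\langle H\tilde v,\tilde v\rangle$ is indefinite, so one must first use the two conservation laws and the orthogonality conditions to show the negative direction $\sqrt\alpha Q^{1/(2b)+3/2}$ carries no mass, and only then is the virial coercivity available. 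Once these two technical estimates are in place, the rest of the argument is the standard Martel–Merle bootstrap.
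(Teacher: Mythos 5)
Your outline is in the Martel--Merle spirit, but it diverges from the paper's actual argument at the two places where the real work is done, and at both places your plan has a genuine gap. First, the coercivity. You decompose $v=a_0(t)\partial_xQ+b_0(t)Q+w$ with $w$ orthogonal (in $H^1_\alpha$) to $\partial_xQ$ and $Q$, and then assert positivity of the quadratic form on $w$ after arguing that ``the negative direction carries no mass'' thanks to the conserved energy. Orthogonality to $\partial_xQ$ and $Q$ does not by itself remove the negative eigenvalue of $\mathcal{L}$ (resp.\ $H$): since $Q$ is not the negative eigenfunction $Q^{\frac{1}{2b}+\frac32}$, one needs a sign condition of the type $\int \mathcal{L}^{-1}(g)\,g\,\mathrm{d}x<0$ for the direction $g$ used in the orthogonality, and the indefinite conserved quantity $\langle H\tilde v,\tilde v\rangle$ gives you no such information. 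The paper avoids this entirely by working with the dual variable $\eta=\mathcal{L}v-\frac{b+1}{b}\beta(t)$ of \eqref{dp}: the orthogonality $\int\eta Q'\,\mathrm{d}x=0$ is automatic (because $\mathcal{L}Q'=0$), the orthogonality $\int\eta\,SQ\,\mathrm{d}x=0$ is arranged by the choice of $\beta(t)$, and coercivity (Lemma \ref{bfg11}) is then proved by verifying the spectral condition \eqref{sc}, $\left(\mathcal{L}^{-1}SQ,SQ\right)<0$, which is checked explicitly using $\mathcal{L}(Q^2)=SQ$ and holds precisely because $b<-1$. Without this (or an equivalent) verification, your virial lower bound simply is not available, and this is the step the whole theorem hinges on.

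Second, the virial computation itself. You correctly identify that commuting a weight past $\mathcal{B}(Q)\mathcal{L}$ produces nonlocal error terms, but you offer no mechanism to control them; ``exponential localization of $\mathcal{B}(Q)$ plus elliptic regularity'' is not enough to close the estimate, because, unlike gKdV, the localized virial cannot be integrated by parts back into the quadratic form of $\mathcal{L}$. The paper's mechanism is quite specific: (i) monotonicity estimates for the linear flow and its derivatives of all orders (Lemmas \ref{lmp1}, \ref{lmp2}) yielding the uniform pointwise exponential decay of Lemma \ref{hol} and hence \eqref{hode}, which is what later converts $\int\eta^2\Phi_M\,\mathrm{d}x\to0$ along sequences into $\int\eta^2\,\mathrm{d}x\to0$; (ii) the sign-changing weight $\phi=Q'$ in the functional $\int\eta^2\phi\,\mathrm{d}x$, the explicit formula \eqref{lbe} for $\mathcal{L}\mathcal{B}(Q)$, and the time-dependent weight $\Phi_M=b^2Q\big(x/(M+t^2)\big)$ with the prefactor $(M+t^2)^{-s}$, $\frac12<s<1$, which makes the nonlocal remainders time-integrable (estimate \eqref{wlz} via \eqref{IE1}--\eqref{IE3}). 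Your proposal contains neither the decay bootstrap (i) nor a substitute for (ii), so as written the plan would stall exactly at the obstacle you flag. The final step you describe (reading off $a_0'=b_0'=0$) is fine and matches the paper, which uses $\mathcal{B}(Q)\mathcal{L}\big(a_0Q'+b_0Q\big)=0$ once $\eta\equiv0$ is known.
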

\begin{remark}

Owing to the non-commutativity of the operators $\mathcal{B}$ and $\mathcal{L}$, the evaluation of $\mathcal{B}(Q)\mathcal{L}$ and $\mathcal{L}\mathcal{B}(Q)$ is nontrivial. The detailed computations are deferred to the Appendix. Unlike in the gKdV equation, where the skew-symmetric operator in the Hamiltonian structure is the local derivative $\partial_x$, the operator $\mathcal{B}$ defined in \eqref{ob} within the Hamiltonian structure of \eqref{bfe} is highly nonlocal. Consequently, after acting on the linearization operator, the resulting expressions cannot be reduced back to the linearization operator by integration by parts. To address this difficulty, in Section~\ref{s2} we adopt the strategy of Kenig and Martel \cite{KM}, introducing directly the standard $L^2$ inner product associated with the linearization operator together with a suitable time-dependent weight function in order to establish the desired uniform boundedness.
\end{remark}

In  proving Theorems~\ref{as}, \ref{nlt}, and \ref{llt}, we establish an asymptotic stability result for the $b$-family of equations in the regime $b < -1$. This fills an existing gap in the literature concerning the asymptotic behavior of the $b$-family across different parameter ranges. Compared with previously studied regimes, the case $b < -1$ introduces new analytical difficulties.
The asymptotic stability problem has a long history, beginning with the seminal works of Soffer, Pego, and Weinstein \cite{SW,SW1,SW2,PW}, who developed a framework in weighted Sobolev spaces to prove the stability of ground states. Building on a different perspective, Martel and Merle \cite{MM3} introduced a novel methodology that combines monotonicity formulae with a Liouville-type property, thereby avoiding the use of weighted spaces. This approach not only removes the need for additional a priori assumptions but also extends naturally to multi-soliton configurations \cite{MM7}. It was first applied to prove the asymptotic stability of solitary waves for the gKdV equation in the energy space $H^1$ \cite{MM3}, and has since been extended to a broad class of nonlinear dispersive models \cite{BGS, CMPS, KM, LM, CLY, CLLW}. In particular, by combining the Martel-Merle framework with the complete integrability of the dispersive Camassa-Holm equation, the asymptotic stability of smooth solitons and multi-solitons in $H^1$ was established in \cite{CLLW}, relying crucially on its bi-Hamiltonian structure and a Liouville-type theorem near solitons.
A key ingredient of the Martel-Merle method is the derivation of monotonicity formulae, which yield decay estimates through direct differentiation. However, for the $b$-family with $b < -1$, the dispersive term is absorbed during this derivation process, obstructing the construction of monotonicity estimates and virial-type bounds. This contrasts sharply with the gKdV equation, where such absorption does not occur and negative weighted norms are not inherent to the formulation. To overcome this difficulty, we establish monotonicity properties for solutions in the spatial $x$-direction but taken backward in time. In this setting, the negative weighted $H^1_\alpha$ functional arises not from the dispersive term but from the intrinsic structure of the weight function built into the monotonicity formula. This constitutes the first major challenge in the asymptotic stability analysis of the $b$-family of equations, reflecting the absence of a linear dispersive mechanism.

The second key ingredient in establishing asymptotic stability is the spectral analysis of the linearized operator around the leftons (see Lemma~\eqref{SPH1}). As noted earlier, the available spectral results are incomplete. A useful observation is that the change of variables $\tilde{f} = \sqrt{\alpha}, f$ transforms the linearized operator of the $b$-family into a form closely resembling that of the gKdV equation. This allows us to exploit the well-understood spectral properties of the localized gKdV-type operator to fully characterize the spectrum of the $b$-family linearization.
A complication arises, however: under this transformation the eigenvalue problem becomes
$\mathcal{L}f=\lambda\sqrt{\alpha}f$,
where $\alpha$ (defined in \eqref{wei}) is uniformly bounded away from zero. As a result, eigenfunctions associated with negative eigenvalues cannot be directly employed in the spectral analysis. It is therefore necessary to identify suitable test functions that preserve localization under the action of $\alpha$. In Section~\ref{s2}, we introduce the specific function
$$SQ=\frac{2k(1-b)}{b}Q^{-1/b}+\frac{2(b-1)}{b}Q,
$$
which satisfies these requirements and plays a central role in our analysis.

Although special functions arise in this context, it remains essential to verify the following nonstandard spectral condition:
\begin{equation}\label{sc}
\int \mathcal{L}^{-1} (SQ), (SQ), \mathrm{d}x < 0.
\end{equation}
The coercivity of the associated bilinear form holds provided that the specific function $SQ$ satisfies both the spectral condition \eqref{sc} and the orthogonality condition
$$\int vSQ\,\mathrm{d}x=0,$$
while at the same time not being orthogonal to the eigenfunction of $\mathcal{L}$. Importantly, the validity of \eqref{sc} depends on the parameter regime $b<-1$, which corresponds to $k>1$. Outside this regime the spectral condition fails, and coercivity cannot be established.
A further difficulty arises from the structure of the conservation law $F_2$, which is not naturally associated with the $H^1$ energy space. This lack of direct correspondence obstructs the derivation of the $H^1$-based conservation laws needed for virial estimates. In particular, this necessitates the additional assumption $F_2<\infty$ in the proof of global well-posedness \cite{HL}. Since $F_2$ cannot be directly exploited, we instead impose a positivity assumption on $m$, which ensures boundedness of solutions in $H^1_\alpha$. This positivity condition is likewise crucial for establishing global well-posedness.

The remainder of the paper is organized as follows. The monotonicity results and linear Liouville property (Theorem \ref{llt}) are established in Section \ref{s2}. Section \ref{s3} is devoted to the crucial monotonicity arguments for $m$ and the nonlinear Liouville property (Theorem \ref{nlt}), required for the proof of Theorem \ref{as}. The proof of asymptotic stability (Theorem \ref{as}) is given in Section \ref{s4}, focusing on the verification of convergence results and compactness conditions.

\section{Linear Liouville property}\label{s2}
The proof of Theorem \ref{llt}, developed in this section, is inspired by  the methodology outlined in \cite{MM1}. We first introduce a smooth, monotonic weight function   and define three localized integral quantities to derive the monotonicity estimates. We then establish uniform exponential decay for all spatial derivatives of linearized solutions of \eqref{l2c} (Lemma \ref{hol}). To prove the theorem, we define a transformed variable  $\eta$ \eqref{dp}  (satisfying orthogonality conditions) and construct a virial identity \eqref{w1} with weight. By employing the coercivity property satisfied by the quadratic form \eqref{lzze} (its associated linear operator linked to gKdV operators via variable transformation) and \eqref{wlz}, we show that localized linear solutions of \eqref{l2c} must be of the form \eqref{lltt}.
\subsection{Monotonicity and smoothness}
To establish the linear Liouville theorem, the main component of proof is the monotonicity properties for the solutions of \eqref{l2c}.
The proof of the monotonicity property relies on an infinitely differentiable weight functions, with the key properties being its monotonic and bounded nature, and the locally convergent behavior exhibited by its derivative.
We define a function $\psi_L(x)\in C^{\infty}(\mathbb{R})$ by
\begin{equation}\label{tf}
\psi_L(x):=\frac{2}{\pi}\arctan\left(\exp\left(\frac{\nu x}{L}\right)\right),
\end{equation}
where $\nu=-\frac{b+1}{2}$ and the positive constant $L=-b+3>4$ with $b<-1$. A straightforward computation shows that
$$\lim_{x\to+\infty}\psi_L(x)=1,~\lim_{x\to-\infty}\psi_L(x)=0,$$ and
$$\psi_L'(x)=\frac{\nu}{\pi L}\operatorname{sech}\left(\frac{\nu x}{L}\right),$$
which satisfies
\begin{equation}\label{psi}
  \psi_L'(x)\leq\psi_L(x),~~\left|\psi_L'''(x)\right|\leq\frac{-b}{L^2}\psi_L'(x),~~~~\forall x\in\mathbb{R}.
\end{equation}

For any $x_0>0$ and $t_0\in\mathbb{R}$, we introduce the following two quantities
$$\mathcal{I}(t)=\int v^2(x,t)\psi_L(\tilde{x})\,\mathrm{d}x,~~~~\mathcal{J}(t)=\int v_x^2(x,t)\psi_L(\tilde{x})\,\mathrm{d}x,$$
where $v$ is the solutions of \eqref{l2c}, $\alpha$ is defined in \eqref{wei} and $\tilde{x}=x+8b(t-t_0)-x_0$ for $t_0\leq t.$
We have the following monotonicity properties for $v(x,t)$. Prior to establishing the monotonicity, we compute $\mathcal{B}(Q)\mathcal{L},$ the detailed derivation provided in Appendix.
\begin{lemma}\label{lmp1}
  Let $v\in C\left(\mathbb{R},H_{\alpha}^1\right)\cap L^{\infty}\left(\mathbb{R},H_{\alpha}^1\right)$ be the solutions of \eqref{l2c} satisfying \eqref{lceta}. For any $x_0>0$, $t_0\in\mathbb{R}$ and $t_0\leq t_1,$ it holds that
  \begin{equation}\label{eta1}
   \mathcal{I}(t_1)-\mathcal{I}(t_0)\lesssim e^{-\nu x_0/L},~~\mathcal{J}(t_1)-\mathcal{J}(t_0)\lesssim e^{-\nu x_0/L}.
  \end{equation}
\end{lemma}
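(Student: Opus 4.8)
The plan is to prove the two monotonicity estimates in \eqref{eta1} by differentiating $\mathcal{I}(t)$ and $\mathcal{J}(t)$ in time and showing that each time derivative is controlled by a negative leading term plus a small error of size $e^{-\nu x_0/L}$; integrating from $t_0$ to $t_1$ then yields the claim. Since $\tilde x = x + 8b(t-t_0) - x_0$ and $b<-1$, the translation speed $8b$ is negative, so the weight $\psi_L(\tilde x)$ is being transported leftward as $t$ increases; because the lefton $Q$ is stationary and $8b$ is chosen more negative than any relevant group velocity, the ``bulk'' of the solution near $Q$ ends up in the region where $\psi_L' $ is exponentially small, which is the mechanism producing the $e^{-\nu x_0/L}$ bound. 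First I would record the explicit evolution equation: writing $\partial_t v = \tfrac{1}{1-b}\mathcal B(Q)\mathcal L v$ and using the computation of $\mathcal B(Q)\mathcal L$ deferred to the Appendix, I would substitute into $\tfrac{d}{dt}\mathcal I(t) = 2\int v\,\partial_t v\,\psi_L(\tilde x)\,dx + 8b\int v^2 \psi_L'(\tilde x)\,dx$, and similarly for $\mathcal J(t)$, integrating by parts to move derivatives off $v$ onto the slowly varying weight $\psi_L(\tilde x)$.

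The key structural point is that whenever $\psi_L$ or one of its derivatives is \emph{not} differentiated, the resulting integrand is (after the integrations by parts that exploit the skew-symmetry of $\mathcal B(Q)$ against the $L^2$ pairing with $\mathcal L$) a total $x$-derivative or otherwise cancels, so that every surviving term carries at least one factor of $\psi_L'(\tilde x)$, $\psi_L''(\tilde x)$, or $\psi_L'''(\tilde x)$. Using the pointwise bounds \eqref{psi}, namely $\psi_L'\le\psi_L$ and $|\psi_L'''|\le \tfrac{-b}{L^2}\psi_L'$, together with $L = -b+3 > 4$, one checks that the higher-derivative contributions are dominated by a small multiple of $\int(v^2+v_x^2)\psi_L'(\tilde x)\,dx$, while the transport term $8b\int v^2\psi_L'(\tilde x)\,dx$ (and its analogue for $\mathcal J$) has the \emph{favorable sign} since $b<0$. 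The choice $L=-b+3$ is precisely what makes the ``bad'' coefficients (from the nonlocal operator $\mathcal B(Q)$, whose symbol is bounded, and from the potential terms built from $Q$, which decays exponentially) strictly smaller in absolute value than the gain $|8b|$ from the transport term; hence
\[
\frac{d}{dt}\mathcal I(t) \le -\,c\int (v^2+v_x^2)(x,t)\,\psi_L'(\tilde x)\,dx + \text{(lower order)},
\]
and likewise for $\mathcal J$, for some $c>0$. The lower-order remainder comes from the exponentially localized parts of the potential multiplied against regions where $\psi_L$ itself (not just $\psi_L'$) appears; this is estimated by splitting the $x$-integral at, say, $|\tilde x| \le x_0/2$ and $|\tilde x|> x_0/2$, using that on $\{\tilde x < -x_0/2\}$ one has $\psi_L(\tilde x)\lesssim e^{\nu\tilde x/L}\lesssim e^{-\nu x_0/(2L)}$, and on $\{\tilde x > -x_0/2\}$ the factor $Q$ (centered near the origin) is exponentially small since then $x = \tilde x - 8b(t-t_0) + x_0$ is large, using $t\ge t_0$ and $b<0$. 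In either case one gains $e^{-\nu x_0/L}$ after absorbing the uniform-in-time bound $\sup_t\|v(\cdot,t)\|_{H^1_\alpha}<\infty$ from the hypothesis $v\in L^\infty(\mathbb R,H^1_\alpha)$.

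Finally, integrating these differential inequalities over $[t_0,t_1]$ and discarding the nonpositive leading term gives $\mathcal I(t_1)-\mathcal I(t_0) \le C\int_{t_0}^{t_1} e^{-\nu x_0/L}\,(\text{integrable in }t)\,dt$; the remainder, while time-dependent, decays in the shifted variable fast enough that the $t$-integral converges (this is where one uses that the center of $\psi_L'$ moves ballistically away at speed $|8b|$, so the running cost is summable), yielding $\mathcal I(t_1)-\mathcal I(t_0)\lesssim e^{-\nu x_0/L}$, and identically for $\mathcal J$. The main obstacle I anticipate is the bookkeeping in the first step: because $\mathcal B(Q)$ is nonlocal, applying the $L^2$ pairing and integrating by parts does not simply return the energy functional (as the Remark after Theorem \ref{llt} emphasizes), so one must carefully track which terms genuinely reduce to a factor of $\psi_L^{(j)}(\tilde x)$ with $j\ge 1$ and which require commuting $\psi_L$ past the operator $(\partial_x - \partial_x^3)^{-1}$ hidden in $\mathcal B$; controlling the commutator $[\psi_L, (1-\partial_x^2)^{-1}\partial_x]$ — which again produces a factor of $\psi_L'$ times a bounded operator — is the technical heart of the estimate and the place where the explicit Appendix computation of $\mathcal B(Q)\mathcal L$ must be invoked in full.
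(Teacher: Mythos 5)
Your proposal is correct in outline and follows essentially the same route as the paper: differentiate the weighted quantities, insert the Appendix formula \eqref{ble} for $\mathcal{B}(Q)\mathcal{L}$, use the favorable sign of the transport term $8b\int(v^2+v_x^2)\psi_L'(\tilde x)\,\mathrm{d}x$ coming from the moving weight, control the nonlocal $h=(1-\partial_x^2)^{-1}v$ contributions by a weighted estimate based on \eqref{psi}, split the remaining integrals in $x$ using the exponential decay of $Q$ against the smallness of $\psi_L$ or $\psi_L'$, and integrate in time using the ballistic drift at speed $|8b|$. One caveat: your ``key structural point'' that skew-symmetry/integration by parts makes every term with an undifferentiated $\psi_L$ cancel is not what actually happens --- in the paper's computation \eqref{ve1}--\eqref{vve} many terms retain a bare $\psi_L(\tilde x)$; what saves them is that every coefficient produced by $\mathcal{B}(Q)\mathcal{L}$ is exponentially localized (powers of $Q$, $Q'$, $Q''$, $Q'''$, cf. \eqref{efq}), so your own case-splitting in $x$ is precisely the mechanism that controls them, and the argument goes through without the claimed cancellation.
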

\begin{proof}
  Applying integration by parts to \eqref{l2c} and utilizing the properties of $\psi_L(x)$ together with \eqref{ble}, it follows that
  \begin{align}\label{ve1}
  \frac12\frac{\mathrm{d}}{\mathrm{d}t}&\int \left(v^2+v_x^2\right)\psi_L(\tilde{x})\,\mathrm{d}x\nonumber\\
  =&\int \left(v\partial_t v+v_x\partial_t v_x\right)\psi_L(\tilde{x})\,\mathrm{d}x
    +4b\int\left(v^2+v_x^2\right)\psi'_L(\tilde{x})\,\mathrm{d}x\nonumber\\
    =&\int v\left(\frac{2k}{1-b}Q^{-\frac{1}{b}-1}Q'v-\frac{2k}{1-b}Q^{-\frac{1}{b}}v_x\right)\psi_L(\tilde{x})\,\mathrm{d}x+\int v\left(-bQh_x-Q'h\right)\psi_L(\tilde{x})\,\mathrm{d}x\nonumber\\
    &+\int v_x\left(-\frac{2kb}{1-b}Q^{-\frac{1}{b}}v+bQv+\frac{2k(1+b)}{b(1-b)}Q^{-\frac{1}{b}-1}Q'v_x-\frac{2k}{1-b}Q^{-\frac{1}{b}}v_{xx}\right)\psi_L(\tilde{x})\,\mathrm{d}x\nonumber\\
    &+\int v_x\left(-bQ'h_x-bQh_{xx}-Q''h-Q'h_x\right)\psi_L(\tilde{x})\,\mathrm{d}x+4b\int\left(v^2+v_x^2\right)\psi'_L(\tilde{x})\,\mathrm{d}x\nonumber\\
    =&4b\int\left(v^2+v_x^2\right)\psi'_L(\tilde{x})\,\mathrm{d}x+\frac{k(2b+1)}{b(1-b)}\int Q^{-\frac{1}{b}-1}Q'v_x^2\psi_L(\tilde{x})\,\mathrm{d}x+\frac{k(b+1)}{1-b}\int Q^{-\frac{1}{b}}v^2\psi_L'(\tilde{x})\,\mathrm{d}x\nonumber\\
    &-\frac{k}{b}\int Q^{-\frac{1}{b}-1}Q' v^2\psi_L(\tilde{x})\,\mathrm{d}x+\frac{k}{1-b}\int Q^{-\frac{1}{b}}v_x^2\psi_L'(\tilde{x})\,\mathrm{d}x+(b+1)\int Q'v h_x\psi_L'(\tilde{x})\,\mathrm{d}x\nonumber\\
    &+2b\int Q' v h\psi_L(\tilde{x})\,\mathrm{d}x+(b+2)\int Q''vh_x\psi_L(\tilde{x})\,\mathrm{d}x+b\int Q v h\psi_L'(\tilde{x})\,\mathrm{d}x-b\int Q v^2\psi_L'(\tilde{x})\,\mathrm{d}x\nonumber\\
    &-(2b+1)\int Q' v^2 \psi_L(\tilde{x})\,\mathrm{d}x+\int Q'' vh \psi_L'(\tilde{x})\,\mathrm{d}x+\int Q''' vh \psi_L(\tilde{x})\,\mathrm{d}x.
  \end{align}
where $h:=(1-\partial_x^2)^{-1}v.$ We deduce from the Cauchy-Schwarz inequality that
\begin{align}\label{vve}
  \frac12\frac{\mathrm{d}}{\mathrm{d}t}\int \left(v^2+v_x^2\right)\psi_L(\tilde{x})\alpha\,\mathrm{d}x
    \leq&4b\int\left(v^2+v_x^2\right)\psi'_L(\tilde{x})\,\mathrm{d}x\nonumber\\
    +&\frac{k(2b+1)}{b(1-b)}\int Q^{-\frac{1}{b}-1}Q'v_x^2\psi_L(\tilde{x})\,\mathrm{d}x+\frac{k(b+1)}{1-b}\int Q^{-\frac{1}{b}}v^2\psi_L'(\tilde{x})\,\mathrm{d}x\nonumber\\
    -&\frac{k}{b}\int Q^{-\frac{1}{b}-1}Q' v^2\psi_L(\tilde{x})\,\mathrm{d}x+\frac{k}{1-b}\int Q^{-\frac{1}{b}}v_x^2\psi_L'(\tilde{x})\,\mathrm{d}x\nonumber\\
    +&\int v^2\bigg(-3bQ\psi'_L(\tilde{x})-(3b+1)Q'\psi_L(\tilde{x})-\frac{b+1}{2}Q'\psi'_L(\tilde{x})\nonumber\\
    &+\frac{2-b}{2}Q''\psi_L(\tilde{x})+Q''\psi'_L(\tilde{x})+Q'''\psi_L(\tilde{x})\bigg)\,\mathrm{d}x\nonumber\\
    +&\int h^2\bigg(-\frac{b}{2}Q\psi'_L(\tilde{x})-b Q'\psi_L(\tilde{x})+Q''\psi'_L(\tilde{x})+Q'''\psi_L(\tilde{x})\bigg)\,\mathrm{d}x\nonumber\\
    +&\int h_x^2\bigg(-\frac{b+1}{2}Q'\psi'_L(\tilde{x})+\frac{2-b}{2} Q'''\psi_L(\tilde{x})\bigg)\,\mathrm{d}x.
  \end{align}
We begin with the following two facts that
$$Q''=b^2Q-\frac{b(3b+1)}{2k}Q^{\frac1b+2}$$
and
$$\left|b^2-\frac{b(3b+1)}{2k}Q\right|+\left|b^2-\frac{(2b+1)(3b+1)}{2k}Q^{\frac1b+1}\right|\lesssim_{b}1,$$
which together with the properties of lefton $Q$ yield that
\begin{align}\label{efq}
    \Bigg|\frac{k(2b+1)}{b(1-b)}&Q^{-\frac{1}{b}-1}Q'\psi_L(\tilde{x})+\frac{k}{-b}Q^{-\frac{1}{b}-1}Q'\psi_L(\tilde{x})+Q\psi'_L(\tilde{x})\nonumber\\
    &+Q'(\psi_L(\tilde{x})+\psi'_L(\tilde{x}))+Q''(\psi_L(\tilde{x})+\psi'_L(\tilde{x}))+Q'''\psi_L(\tilde{x})\Bigg|\leq(-b)\operatorname{sech}\left(\frac{\nu x}{L}\right)\psi_L(\tilde{x}),
\end{align}
for $b<-1$ and $L>4.$
It remains to estimate the right hand of \eqref{vve}. We consider three cases about $x$ for some constant $R_0>0.$\\
$Case~1.$ If $x<R_0,$ it indicates that $\tilde{x}<R_0+8b(t-t_0)-x_0,$ then we have
  $$\operatorname{sech}\left(\frac{\nu x}{L}\right)\psi_L(\tilde{x})\lesssim e^{\nu\tilde{x}/L}\lesssim e^{\nu\left(R_0+8b(t-t_0)-x_0\right)/L}.$$
$Case~2.$ If $R_0<x<4b(t_0-t)+\frac12x_0,$ then $\tilde{x}<0.$ A direct computation gives  that
  $$\operatorname{sech}\left(\frac{\nu x}{L}\right)\psi_L(\tilde{x})\lesssim \operatorname{sech}\left(\frac{\nu x}{L}\right)\psi_L'(\tilde{x})$$
  since  $\psi_L(\tilde{x})\lesssim\psi_L'(\tilde{x})$ for $\tilde{x}<0.$\\
$Case~3.$ If $x>4b(t_0-t)+\frac12x_0>0,$ we have
  $$\operatorname{sech}\left(\frac{\nu x}{L}\right)\psi_L(\tilde{x})\lesssim \operatorname{sech}\left(\frac{\nu x}{L}\right)\lesssim \operatorname{sech}\left(\frac{\nu}{L} \left(4b(t_0-t)+\frac12x_0\right)\right)\lesssim e^{-\nu\left|4b(t_0-t)+\frac12x_0\right|/L},$$
since $\psi_L'(x)\lesssim e^{-|x|/L}$ for $x\in\mathbb{R}.$

It follows from \eqref{psi} that
$$\int (h^2+h_x^2)\psi_L'(\tilde{x})\,\mathrm{d}x\lesssim\int v^2\psi_L'(\tilde{x})\,\mathrm{d}x,$$
which gives that
\begin{align}\label{vev}
 \frac12&\frac{\mathrm{d}}{\mathrm{d}t}\int \left(v^2+v_x^2\right)\psi_L(\tilde{x})\,\mathrm{d}x\nonumber\\
 \leq&3b\int\left(v^2+v_x^2\right)\psi'_L(\tilde{x})\,\mathrm{d}x+Ce^{\nu\left(R_0+8b(t-t_0)-x_0\right)/L}\int \left(v^2+v_x^2+h^2+h_x^2\right)\,\mathrm{d}x\nonumber\\
 &+Ce^{-\nu\left|4b(t_0-t)+\frac12x_0\right|/L}\int \left(v^2+v_x^2+h^2+h_x^2\right)\,\mathrm{d}x\nonumber\\
 &+(-b)\operatorname{sech}\left(\frac{\nu R_0}{L}\right)\int \left(v^2+v_x^2+h^2+h_x^2\right)\psi_L'(\tilde{x})\,\mathrm{d}x\nonumber\\
 \leq&2b\int\left(v^2+v_x^2\right)\psi'_L(\tilde{x})\,\mathrm{d}x+Ce^{\nu\left(R_0+8b(t-t_0)-x_0\right)/L}\int \left(v^2+v_x^2\right)\,\mathrm{d}x\nonumber\\
 &+Ce^{-\nu\left|4b(t_0-t)+\frac12x_0\right|/L}\int \left(v^2+v_x^2\right)\,\mathrm{d}x,
\end{align}
for sufficiently large $R_0$ such that $\operatorname{sech}\left(\frac{\nu R_0}{L}\right)\leq\frac{1}{2}.$

Integrating this inequality on $(t_0-1,t_0)$, we obtain that, for a positive  constant $C>0,$
\begin{equation*}
    \mathcal{I}(t_0)-\mathcal{I}(t_0-1)\leq Ce^{-\nu x_0/L}~~,\forall t_0\in\mathbb{R}.
  \end{equation*}
Similarly, integrating \eqref{vev} on $[t_0,t_0+\tau]$ for any $\tau>0,$ we have
\begin{equation*}
    \mathcal{I}(t_0+\tau)-\mathcal{I}(t_0)\leq Ce^{-\nu x_0/L}e^{-\nu(t_0+\tau)/L}.
  \end{equation*}
It is clear that
$$\mathcal{I}(t_0-1)=\int v^2(x,t_0-1)\psi_L(\tilde{x}(t_0-1))\,\mathrm{d}x\lesssim 1$$
and taking the limit $\tau\to\infty,$ we deduce that
$$\mathcal{I}(t_0+\tau)=\int v^2(x,t_0+\tau)\psi_L(x+8b\tau-x_0)\,\mathrm{d}x\to0$$
and $\mathcal{I}(t_0)\geq-Ce^{-\nu x_0/L},$ which implies that $|\mathcal{I}(t_0)|\lesssim e^{-x_0/L}.$ For $\mathcal{J}(t_0)$, it follows similarly that $|\mathcal{J}(t_0)|\lesssim e^{-\nu x_0/L}.$
In addition, integrating \eqref{vev} on $(\tilde{t}_0,\tilde{t}_0+1)$, we get that
\begin{align}
\int&\left(v^2+v_x^2\right)(x,t_0)\psi_L(x-x_0)\,\mathrm{d}x+\int_{\tilde{t}_0}^{\tilde{t}_0+1}\int\left(v^2+v_x^2\right)\psi'_L(\tilde{x})\,\mathrm{d}x\,\mathrm{d}t
    \lesssim e^{-\nu x_0/L},\label{c2.8}
\end{align}
for $ \tilde{t}_0\in\mathbb{R}.$
\end{proof}

We continue the proof of the monotonicity properties for solutions with high-order derivatives,  the counterpart gKdV result was shown in \cite{MM8}.

\begin{lemma}\label{lmp2}
  Let $v\in C\left(\mathbb{R},H_{\alpha}^1\right)\cap L^{\infty}\left(\mathbb{R},H_{\alpha}^1\right)$ be the solutions of \eqref{l2c} satisfying \eqref{lceta}. For any $x_0>0$, $t_0\in\mathbb{R}$ and $t_0\leq t_1,$ it holds that
  \begin{equation}\label{eta11}
    \mathcal{K}(t_1)-\mathcal{K}(t_0)\lesssim e^{-\nu x_0/L},
  \end{equation}
where $\mathcal{K}(t)=\int (\partial_x^j v)^2\psi_L(\tilde{x})\,\mathrm{d}x$ for $j\in\mathbb{N}.$
\end{lemma}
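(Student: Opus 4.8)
The plan is to adapt the monotonicity argument of Lemma \ref{lmp1} to the higher-order quantity $\mathcal{K}(t)=\int(\partial_x^jv)^2\psi_L(\tilde x)\,\mathrm{d}x$, proceeding by induction on $j$, with the base case $j=0,1$ already furnished by Lemma \ref{lmp1}. First I would differentiate the evolution equation \eqref{l2c} $j$ times in $x$: setting $w=\partial_x^jv$ and $g=\partial_x^j h=(1-\partial_x^2)^{-1}w$, one obtains $\partial_t w=\frac{1}{1-b}\partial_x^j\big(\mathcal{B}(Q)\mathcal{L}v\big)$, which—using the explicit form of $\mathcal{B}(Q)\mathcal{L}$ computed in the Appendix and the identity $Q''=b^2Q-\tfrac{b(3b+1)}{2k}Q^{1/b+2}$—can be written schematically as $\partial_t w=-\tfrac{2k}{1-b}Q^{-1/b}w_x+R_j$, where the transport coefficient $-\tfrac{2k}{1-b}Q^{-1/b}\to -8b$ at spatial infinity (matching the speed $8b$ built into $\tilde x$), and $R_j$ collects lower-order terms: products of $w,w_x$ with $x$-derivatives of the smooth, exponentially localized coefficients $Q^{-1/b}, Q^{-1/b-1}Q'$, etc., plus nonlocal contributions involving $g=(1-\partial_x^2)^{-1}w$ and its derivatives up to order two, all carrying at least one factor of a derivative of $Q$ (hence exponentially decaying) as a coefficient.

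Next I would compute $\tfrac{d}{dt}\mathcal{K}(t)=2\int w\,\partial_t w\,\psi_L(\tilde x)\,\mathrm{d}x+8b\int w^2\psi_L'(\tilde x)\,\mathrm{d}x$, integrate by parts in the transport term to produce the good negative quantity $-c\int w^2\psi_L'(\tilde x)\,\mathrm{d}x$ with $c>0$ (exactly as the $4b$ and $3b$ terms combine in \eqref{vev}), and bound the remainder $\int wR_j\psi_L(\tilde x)\,\mathrm{d}x$ using Cauchy–Schwarz, the bounds \eqref{psi} on $\psi_L,\psi_L''',$ and the property $\|g\|_{H^2}\lesssim\|w\|_{L^2}$ of the Helmholtz resolvent, together with the three-region decomposition (Cases~1–3 of Lemma \ref{lmp1}) that converts the factor $\operatorname{sech}(\nu x/L)\psi_L(\tilde x)$ coming from the localized coefficients into either $e^{\nu(R_0+8b(t-t_0)-x_0)/L}$, a multiple of $\psi_L'(\tilde x)$, or $e^{-\nu|4b(t_0-t)+x_0/2|/L}$. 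The only new ingredient relative to Lemma \ref{lmp1} is that the remainder also contains $\int(\partial_x^{j-1}v)^2$- and $(\partial_x^{j-1}v_x)^2$-type weighted integrals, which by the inductive hypothesis (and \eqref{c2.8} applied at order $j-1$) are already controlled by $e^{-\nu x_0/L}$ after integrating in $t$; this closes the induction. Integrating the resulting differential inequality over $(t_0,t_1)$ and absorbing the spatially-decaying exponentials gives \eqref{eta11}, and integrating over a unit time interval gives the analogue of \eqref{c2.8} at order $j$ needed to continue the induction.

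The main obstacle I anticipate is bookkeeping the nonlocal terms $R_j$: because $\mathcal{B}(Q)$ is nonlocal, $\partial_x^j$ does not commute cleanly with it, so one must verify that every term produced by $\partial_x^j(\mathcal{B}(Q)\mathcal{L}v)$ beyond the principal transport term $-\tfrac{2k}{1-b}Q^{-1/b}\partial_x^{j+1}v$ genuinely carries a coefficient that is either bounded with an exponentially-decaying derivative factor or is a resolvent $(1-\partial_x^2)^{-1}$ applied to something of order $\le j$ in $v$—so that no term of order $j+1$ in $v$ survives without the good sign, and no uncontrolled nonlocal growth appears. Once the structural claim "$R_j$ involves only $w,w_x,g,g_x,g_{xx}$ with exponentially localized coefficients, plus strictly-lower-order weighted integrals" is established—most efficiently by an induction that differentiates the already-known order-$(j-1)$ identity rather than re-expanding from scratch—the estimates are a routine repetition of \eqref{vve}–\eqref{vev}.
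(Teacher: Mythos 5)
Your plan is essentially the paper's proof: induction on $j$, Leibniz expansion of the explicit form \eqref{ble} of $\tfrac{1}{1-b}\mathcal{B}(Q)\mathcal{L}v=-bq'v-qv_x-bQh_x-Q'h$, an energy estimate for $\int(\partial_x^jv)^2\psi_L(\tilde x)\,\mathrm{d}x$ in which the moving weight supplies the negative term, the bound $\operatorname{sech}(\nu x/L)\psi_L(\tilde x)\lesssim\psi_L'(\tilde x)$ together with the three-region decomposition to absorb the localized coefficients, and control of the lower-order terms through the inductive space-time dissipation estimate before integrating on $(t_0,t_1)$. The only inaccuracy is your claim that the transport coefficient $-\tfrac{2k}{1-b}Q^{-1/b}=-q$ tends to $-8b$ at spatial infinity (it tends to $0$, since $Q\to0$ and $-1/b>0$), but this is immaterial because the good sign comes from the $8b\int(\partial_x^jv)^2\psi_L'$ term generated by the time-dependence of $\tilde x$, exactly as in your own computation.
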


\begin{proof}
We deduce from \eqref{ble} and the Leibniz rule that
\begin{align*}
    \partial_t\left(\partial_x^j v\right)=&\frac{2k}{1-b}\sum_{0\leq i\leq j}\partial_x^i\left(-b\partial_x\left(Q^{-\frac1b}\right)\right)\partial_x^{j-i}v
    -\frac{2k}{1-b}\sum_{0\leq i\leq j}\partial_x^i\left(Q^{-\frac1b}\right)\partial_x^{j+1-i}v\nonumber\\
    &-b\sum_{0\leq i\leq j}\partial_x^iQ\partial_x^{j+1-i}h-\sum_{0\leq i\leq j}\partial_x^{i+1}Q\partial_x^{j-i}h
\end{align*}
Arguing as Lemma \ref{lmp1}, for $v,$ we obtain that
  \begin{align}
    \frac12&\frac{\mathrm{d}}{\mathrm{d}t}\int\left(\partial_x^jv\right)^2\psi_L(\tilde{x})\,\mathrm{d}x\nonumber\\
    =&\frac{2k}{1-b}\sum_{0< i\leq j}\int\partial_x^{i+1}\left(Q^{-\frac1b}\right)\partial_x^{j-i}v \partial_x^{j}v\psi_L(\tilde{x})\,\mathrm{d}x
    -\frac{2k}{1-b}\sum_{1< i\leq j}\int\partial_x^{i}\left(Q^{-\frac1b}\right)\partial_x^{j-i}v \partial_x^{j}v\psi_L(\tilde{x})\,\mathrm{d}x\nonumber\\
    &-b\sum_{0< i\leq j}\int\partial_x^{i}Q\partial_x^{j+1-i}h\partial_x^{j}v\psi_L(\tilde{x})\,\mathrm{d}x-\sum_{0< i\leq j}\int\partial_x^{i+1}Q\partial_x^{j-i}h \partial_x^{j}v\psi_L(\tilde{x})\,\mathrm{d}x\nonumber\\
    &\frac{k(2b+1)}{b(1-b)}\int Q^{-\frac{1}{b}-1}Q'\left(\partial_x^jv\right)^2\psi_L(\tilde{x})\,\mathrm{d}x+\frac{k}{1-b}\int Q^{-\frac{1}{b}}\left(\partial_x^jv\right)^2\psi_L'(\tilde{x})\,\mathrm{d}x\nonumber\\
    &-b\int Q\partial_x^{j+1}h\partial_x^jv\psi_L(\tilde{x})\,\mathrm{d}x-\int Q'\partial_x^{j}h\partial_x^jv\psi_L(\tilde{x})\,\mathrm{d}x.
  \end{align}
We claim that
  \begin{align}\label{efho}
    \frac12\frac{\mathrm{d}}{\mathrm{d}t}\int\left(\partial_x^jv\right)^2\psi_L(\tilde{x})\,\mathrm{d}x\leq& -3b\int\left(\partial_x^jv\right)^2\psi_L'(\tilde{x})\,\mathrm{d}x-\frac{b}{4}\int\left((\partial_x^{j+1}h)^2+(\partial_x^{j}h)^2\right)^2\psi_L'(\tilde{x})\,\mathrm{d}x\nonumber\\
    &-\frac{b}{2}\sum_{0\leq i\leq j-2}\int\left(\partial_x^{i}v\right)^2\psi_L'(\tilde{x})\,\mathrm{d}x-\frac{b}{8}\sum_{0\leq i\leq j-1}\int\left(\partial_x^{i+1}h\right)^2\psi_L'(\tilde{x})\,\mathrm{d}x\nonumber\\
    &-\frac{b}{2}\sum_{0\leq i\leq j-1}\int\left(\partial_x^{i}v\right)^2\psi_L'(\tilde{x})\,\mathrm{d}x-\frac{b}{8}\sum_{0\leq i\leq j-1}\int\left(\partial_x^{i}h\right)^2\psi_L'(\tilde{x})\,\mathrm{d}x\nonumber\\
    \leq&-3b\int\left(\partial_x^jv\right)^2\psi_L'(\tilde{x})\,\mathrm{d}x-\frac{3b}{2}\sum_{0\leq i\leq j-1}\int\left(\partial_x^i v\right)^2\psi_L'(\tilde{x})\,\mathrm{d}x.
  \end{align}
  Indeed, it is observed from the properties of $Q$, the fact $\frac{k}{1-b}<-\frac{b}{2},~\frac{k(2b+1)}{b(1-b)}<-\frac{b}{2},-\frac{k}{b}<-b$ and
  $$\partial_x^2\left(Q^{-\frac{1}{b}-1}Q'\right)=-bQ^{-\frac{1}{b}}+\frac{b(1-b)}{2k}Q\lesssim\operatorname{sech}(\nu x/L),$$
  that
\begin{align}\label{psi2}
    \operatorname{sech}(\nu x/L)\psi_L(\tilde{x})\lesssim \psi_L'(\tilde{x}).
\end{align}
Here, we consider two cases for $\tilde{x}.$ In the case $\tilde{x}<0,$ it is clear that $\psi_L(\tilde{x})\lesssim \psi_L'(\tilde{x}).$ For the case $\tilde{x}>0,$ it reveals that $x>-8b(t-t_0)+x_0,$ that is $x>\tilde{x}>0,$ which leads to
$$\operatorname{sech}(\nu x/L)\psi_L(\tilde{x})\lesssim\operatorname{sech}(\nu x/L)\lesssim\psi_L'(\tilde{x}).$$
Invoking \eqref{psi2} and the properties of $Q$, we get that \eqref{efho} holds for $b<-1.$
In a similar way, we obtain that
\begin{align}\label{efho2}
    \frac12\frac{\mathrm{d}}{\mathrm{d}t}\int\left(\partial_x^{j+1}v\right)^2\psi_L(\tilde{x})\,\mathrm{d}x
    \leq& b\int\left(\left(\partial_x^jv\right)^2+\left(\partial_x^{j+1}v\right)^2\right)\psi_L'(\tilde{x})\,\mathrm{d}x\nonumber\\
    &-\frac{3b}{2}\sum_{0\leq i\leq j-1}\int\left(\partial_x^i v\right)^2\psi_L'(\tilde{x})\,\mathrm{d}x.
  \end{align}

We integrate \eqref{efho2} between $t_0-1$ and $t_0$, for fixed $t_0\in\mathbb{R},$ to obtain that
\begin{equation}\label{ho2}
  \mathcal{K}(t_0)-\mathcal{K}(t_0-1)\lesssim e^{-\nu x_0 / L}.
\end{equation}
For $\mathcal{K}(t_0-1),$ invoking the induction hypothesis for $j-1,$ we have
$$\int_{\tilde{t}_0}^{\tilde{t}_0+1}\int\left(\partial^{j}_x v\right)^2(x, t) \psi_L^{\prime}\left(\tilde{x}\right) \mathrm{d} x \mathrm{d} t \lesssim e^{-\nu x_0 / L},$$
which leads to
$$\int_{\tilde{t}_0}^{\tilde{t}_0+1}\int_{x<8b(t_0-t)+x_0}\left(\partial^{j}_x v\right)^2(x, t) e^{\nu\left(x+8b(t_0-t)\right)} \mathrm{d} x \mathrm{d} t \lesssim 1.$$
Then, we deduce from passing to the limit as $x_0\to\infty$ and multiplying $e^{-\nu x_0 / L}$ that
$$\int_{\tilde{t}_0}^{\tilde{t}_0+1}\int\left(\partial^{j}_x v\right)^2(x, t) \psi_L\left(\tilde{x}\right) \mathrm{d} x \mathrm{d} t \lesssim e^{-\nu x_0 / L},$$
where we use the fact that $\psi'_L(x)\geq e^{\nu x/L}$ for $x<0$ and $\psi_L(x)\leq e^{\nu x/L}$ for $x\in\mathbb{R}.$ In fact, we take a special choose $\tilde{t}_0=t_0-\frac32$ with $t_0-1\in(\tilde{t}_0,\tilde{t}_0+1)$ to get that $\mathcal{K}(t_0-1)\lesssim e^{-\nu x_0 / L}.$ We finally integrate \eqref{efho2} between $t_0$ and $t_0+\tau$ for $\tau>0$ to yield that
$$\mathcal{K}(t_0+\tau)-\mathcal{K}(t_0)\lesssim e^{-\nu x_0 / L}.$$
Taking the limit $\tau\to\infty,$ we also deduce that
$\mathcal{K}(t+\tau)\to0$ and $\mathcal{K}(t_0)\geq-Ce^{-\nu x_0/L},$ which imply that $|\mathcal{K}(t_0)|\lesssim e^{-\nu x_0/L}.$ This completes the proof of Lemma \ref{lmp2}.
\end{proof}

In the following, we derive the uniform exponential decays of the solutions  of \eqref{l2c}.

\begin{lemma}\label{hol}
  Let $v\in C\left(\mathbb{R},H_{\alpha}^1\right)\cap L^{\infty}\left(\mathbb{R},H_{\alpha}^1\right)$ be the solutions of \eqref{l2c} satisfying \eqref{lceta}. Then there exists a positive constant $\nu$ such that for any $j\in\mathbb{N}$,  there holds
  \begin{equation}\label{eta12}
    \sup_{t\in\mathbb{R}}\int\left(\partial_x^j v\right)^2(x,t)e^{\nu|x|}\,\mathrm{d}x\lesssim1.
  \end{equation}
\end{lemma}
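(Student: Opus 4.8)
The plan is to obtain Lemma~\ref{hol} as a fairly direct consequence of the uniform monotonicity bounds already proved in Lemmas~\ref{lmp1}--\ref{lmp2}: those lemmas furnish a bound on $\mathcal{K}(t_0)=\int(\partial_x^j v)^2(x,t_0)\psi_L(x-x_0)\,\mathrm{d}x$ that is \emph{uniform} in the shift $x_0>0$ and in $t_0\in\mathbb{R}$, and the point is to promote this shifted estimate to a genuine unshifted weighted $L^2$ bound. I would do this separately on the two half-lines, deriving the $x\to+\infty$ decay directly from the monotonicity of $\mathcal{K}$ and the $x\to-\infty$ decay from a reflection symmetry of \eqref{l2c}.

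For the right half-line, fix $j\in\mathbb{N}$ and $t\in\mathbb{R}$ and use $t$ in the role of $t_0$. Lemma~\ref{lmp2} (more precisely the bound $|\mathcal{K}(t_0)|\lesssim e^{-\nu x_0/L}$ isolated in its proof; for $j=0,1$ this is Lemma~\ref{lmp1}) gives $\int(\partial_x^j v)^2(x,t)\psi_L(x-x_0)\,\mathrm{d}x\lesssim e^{-\nu x_0/L}$ with a constant independent of $t$ and $x_0$. Since $\psi_L(y)=\tfrac{2}{\pi}\arctan(e^{\nu y/L})\geq\tfrac12 e^{\nu y/L}$ for $y\leq 0$ (concavity of $\arctan$ on $[0,1]$), discarding the region $x\geq x_0$ yields
\[
\tfrac12 e^{-\nu x_0/L}\int_{x<x_0}(\partial_x^j v)^2(x,t)\,e^{\nu x/L}\,\mathrm{d}x\;\leq\;\mathcal{K}(t)\;\lesssim\;e^{-\nu x_0/L},
\]
so that $\int_{x<x_0}(\partial_x^j v)^2(x,t)\,e^{\nu x/L}\,\mathrm{d}x\lesssim 1$ uniformly in $x_0$ and $t$; letting $x_0\to+\infty$ and invoking monotone convergence gives $\sup_{t\in\mathbb{R}}\int(\partial_x^j v)^2(x,t)\,e^{\nu x/L}\,\mathrm{d}x\lesssim 1$, which controls the decay as $x\to+\infty$.

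For the left half-line I would exploit that \eqref{l2c} is invariant under $(x,t)\mapsto(-x,-t)$. The lefton profile is even, so $\mathcal{L}$ commutes with the reflection $R\colon f(x)\mapsto f(-x)$, whereas each of the three factors of $\mathcal{B}(Q)$ in \eqref{ob} anti-commutes with $R$, hence so does $\mathcal{B}(Q)$, and therefore $\mathcal{B}(Q)\mathcal{L}$ anti-commutes with $R$; consequently $w(x,t):=v(-x,-t)$ is again a solution of \eqref{l2c} lying in $C(\mathbb{R},H_\alpha^1)\cap L^\infty(\mathbb{R},H_\alpha^1)$ and satisfying \eqref{lceta} (here $\alpha$ is even). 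Applying the right half-line estimate to $w$ gives $\sup_{t}\int(\partial_x^j v)^2(x,t)\,e^{-\nu x/L}\,\mathrm{d}x\lesssim 1$, and since $e^{(\nu/L)|x|}\leq e^{\nu x/L}+e^{-\nu x/L}$, adding the two bounds produces $\sup_{t}\int(\partial_x^j v)^2(x,t)\,e^{(\nu/L)|x|}\,\mathrm{d}x\lesssim 1$, which is \eqref{eta12} after renaming the exponent constant. In this argument essentially all the analytic work is already contained in Lemmas~\ref{lmp1}--\ref{lmp2}; the only step that genuinely requires care is the parity bookkeeping for the nonlocal operator $\mathcal{B}(Q)$, and one may sidestep it entirely by instead re-running the proofs of Lemmas~\ref{lmp1}--\ref{lmp2} verbatim with $1-\psi_L$ in place of $\psi_L$ and time reversed, the favorable sign of the $b$-weighted $\psi_L'$ terms for $b<-1$ being unaffected.
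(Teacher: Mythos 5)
Your argument is correct and is essentially the paper's own proof: both deduce $\int_{x<x_0}(\partial_x^j v)^2 e^{\nu x/L}\,\mathrm{d}x\lesssim 1$ from the uniform bound $|\mathcal{K}(t_0)|\lesssim e^{-\nu x_0/L}$ of Lemmas~\ref{lmp1}--\ref{lmp2} together with $\psi_L(y)\gtrsim e^{\nu y/L}$ for $y<0$, and then transfer the estimate to the left half-line via the observation that $v(-x,-t)$ again solves \eqref{l2c} and satisfies \eqref{lceta}. Your explicit parity bookkeeping for $\mathcal{L}$ (commuting with reflection) and $\mathcal{B}(Q)$ (anti-commuting, as a product of three anti-commuting factors) is a welcome justification of a step the paper merely asserts.
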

\begin{proof}
Under the conclusions of Lemma \ref{lmp2}, we derive from $\phi_L(y)\geq Ce^{y/L}$ and $\phi_L'(y)\geq Ce^{y/L}$ for $y<0$ that
  \begin{align}\label{hod}
   \int_{x<x_0}&\left(\partial_x^jv\right)^2(x,t_0)e^{\nu x/L}\,\mathrm{d}x\lesssim 1.
  \end{align}
Observe that $v(-x,-t)$ is the solutions of \eqref{l2c} satisfying \eqref{lceta}. Consequently, the same result holds for $v(-x,-t)$ when choosing $\delta=\frac{\nu}{L}$, which implies that \eqref{eta12} holds. Moreover, one gives that
\begin{align}\label{hod1}
  e^{A_0\delta}\int_{x>A_0}&\left(\partial_x^j v\right)^2(x,t)\,\mathrm{d}x\lesssim \int\left(\partial_x^jv\right)^2(x,t)e^{\delta|x|}\,\mathrm{d}x,~~A_0>0,
\end{align}
and invoking the Gagliardo-Nirenberg inequality $\|v\|_{L^{\infty}(x>A_0)}^2\lesssim \|v\|_{L^{2}(x>A_0)}\|\partial_x v\|_{L^{2}(x>A_0)},$ we obtain the pointwise exponential decay
$|\partial_x^j v|\lesssim e^{-A_0\nu}.$ We argue similarly for the derivative in space at any order of $v(-x,-t),$ that is $|\partial_x^j \tilde{v}|\lesssim e^{-A_0\nu}$ for $\tilde{v}=v(-x,-t),$ which completes the proof of Lemma \ref{hol}.
\end{proof}

\subsection{Proof of linear Liouville theorem}\label{s2.2}
In this subsection, we give the proof of Theorem \ref{llt}. The idea of proof originates from the strategy in \cite{MM1}, which introduces a dual problem that simplifies
\eqref{l2c}. We define that
\begin{equation}\label{dp}
    \eta=\mathcal{L}v-\frac{b+1}{b}\beta(t),
\end{equation}
where $$\beta(t)=\frac{b}{b+1}\frac{\int SQ\mathcal{L}v\,\mathrm{d}x}{\int SQ\,\mathrm{d}x},~~SQ=\frac{2k(1-b)}{b}Q^{-\frac{1}{b}}+\frac{2(b-1)}{b}Q.$$
It is readily verifiable $\int SQ\,\mathrm{d}x\neq0,$ which also ensures that $\beta(t)$ is well-defined. Note also that $\eta$ satisfies the following orthogonality conditions
\begin{equation}\label{woc1}
\int \eta Q'\,\mathrm{d}x=\int \mathcal{L}v Q'\,\mathrm{d}x-\frac{b+1}{b}\beta(t)\int Q'\,\mathrm{d}x=\int v \mathcal{L}Q'\,\mathrm{d}x-\frac{b+1}{b}\beta(t)\int Q'\,\mathrm{d}x=0,
\end{equation}
and
\begin{equation}\label{woc2}
\int \eta SQ\,\mathrm{d}x=\int \mathcal{L}v SQ\,\mathrm{d}x-\frac{b+1}{b}\beta(t)\int SQ\,\mathrm{d}x=0.
\end{equation}
We derive from \eqref{ker}, \eqref{l2c} and \eqref{dp} that
\begin{align}\label{dp2}
    \partial_t\eta=\mathcal{L}\partial_t v-\frac{b+1}{b}\beta'(t)=&\mathcal{L}\left(\frac{1}{1-b}\mathcal{B}(Q)\mathcal{L}v\right)+\tilde{\beta}(t)\nonumber\\
    =&\frac{1}{1-b}\mathcal{L}\mathcal{B}(Q)\eta+\tilde{\beta}(t),
\end{align}
where $$\tilde{\beta}(t)(t)=-\frac{\frac{\mathrm{d}}{\mathrm{d}t}\int SQ\mathcal{L}v\,\mathrm{d}x}{\int SQ\,\mathrm{d}x},$$
and we used the fact $\mathcal{L}\mathcal{B}(Q)\left(c\right)=0$ for arbitrary constant $c.$

We note that there is a compact condition for $\eta\in C\left(\mathbb{R},H_{\alpha}^1\right)\cap L^{\infty}\left(\mathbb{R},H_{\alpha}^1\right).$
For a positive constant $C,$ it follows from Lemma \ref{hol} and the decay properties of $Q$ that
\begin{equation}\label{hode}
  \left|\eta(x,t)\right|\leq Ce^{-\nu|x|},\quad\forall x\in\mathbb{R},t\in\mathbb{R}.
\end{equation}

Next, in order to compute a virial identity concerning the solutions of \eqref{l2c}, we introduce two functions $\phi(x)=Q'(x)$ and $\Phi(x)=b^2Q(x).$ For a constant $M>1$ to be chosen later, we set
\begin{equation}\label{phim}
\Phi_M(x)=b^2Q\left(\frac{x}{M+t^2}\right).
\end{equation}
A direct calculation gives
\begin{equation}\label{vgf}
  \phi'(x)=Q''(x)=b^2Q-\frac{b(3b+1)}{2k}Q^{\frac1b+2},~~\phi''(x)=b^2Q'-\frac{(2b+1)(3b+1)}{2k}Q^{\frac1b+1}Q',
\end{equation}
and
\begin{equation}\label{vgf1}
  \phi'''(x)=-\frac{(2b+1)(3b+1)}{2k}\left((2b^2+b)Q^{\frac1b+2}-\frac{b(5b+3)}{2k}Q^{\frac2b+3}\right)-\frac{b^3(3b+1)}{2k}Q^{\frac1b+2}+b^4Q.
\end{equation}
Similarly, we also have
\begin{equation}\label{vgf2}
  \frac{(\Phi')^2}{\Phi'}=b^2\frac{(Q')^2}{Q}=b^4Q\left(1-\frac{Q^{\frac{1}{b}+1}}{k}\right).
\end{equation}
We compute that there exists a positive constant $C$ with respect to $b$ such that
\begin{equation}\label{gine}
\left|\phi'(x)\right|+\left|\frac{(\phi'')^2}{\phi'}\right|+\left|\phi'''(x)\right|\leq C\Phi(x),~~\left|\Phi'\right|+\left|\frac{(\Phi')^2}{\Phi'}\right|\leq C\Phi(x).
\end{equation}
Multiplying the equation of $\eta$ in \eqref{dp2} and integrating over $\mathbb{R}$, we infer from the expression of $\eta\mathcal{L}\mathcal{B}(Q)$ \eqref{lbe} that
\begin{align}\label{w1}
\frac12\frac{\mathrm{d}}{\mathrm{d}t}\int \eta^2\phi\,\mathrm{d}x=&\frac{1}{1-b}\int \eta\mathcal{L}\mathcal{B}(Q)\eta\phi\,\mathrm{d}x+\tilde{\beta}\int \eta\phi\,\mathrm{d}x\nonumber\\
=&\frac{1}{1-b}\int \eta\phi\bigg(-2kQ^{-\frac{1}{b}}\partial_x\eta+\frac{2k(1-b)}{b}Q^{-\frac{1}{b}-1}Q'\eta\nonumber\\
    &+(b-1)(1-\partial_x^2)^{-1}\left(bQ\partial_x\eta+(b-1)Q'\eta\right)\bigg)\,\mathrm{d}x\nonumber\\
=&-\frac{k(2b-1)}{b(1-b)}\int Q^{-\frac{1}{b}-1}Q'\eta^2\phi\,\mathrm{d}x+\frac{k}{(1-b)}\int Q^{-\frac{1}{b}}\eta^2\phi'\,\mathrm{d}x\nonumber\\
&-\int \eta\phi(1-\partial_x^2)^{-1}\left(bQ\partial_x\eta+(b-1)Q'\eta\right)\,\mathrm{d}x,
\end{align}
where the $\tilde{\beta}$ term vanish by employing the orthogonality conditions \eqref{woc1}. Unlike the Hamiltonian structure of the KdV equation, the virial identity cannot directly yield the expression for operator $\mathcal{L}$ due to the presence of linear nonlocal operator $\mathcal{B}$. Therefore, we directly compute the virial estimate for operator $\mathcal{L}$ herein.

Let $M>1$ be chosen later and set $z=\eta\sqrt{\Phi_M}$ such that
\begin{align}\label{lz}
    \left(\mathcal{L}z,z\right)=&\frac{2k}{b^2}\int\eta_x^2\Phi_M\alpha\,\mathrm{d}x-\frac{2k(b+1)}{b}\int\eta^2\Phi_M\alpha\,\mathrm{d}x-\frac{k}{b^2}\int\eta^2\Phi_M'\alpha_x\,\mathrm{d}x\nonumber\\
    &-\frac{k}{b^2}\int\eta^2\Phi_M''\alpha\,\mathrm{d}x+\frac{k}{2b^2}\int\eta^2\frac{(\Phi_M')^2}{\Phi_M}\alpha\,\mathrm{d}x.
\end{align}
For $b<-1,$ we claim that the following {\bf coercivity property} holds true,
\begin{align}\label{lzze}
\left(\mathcal{L}z,z\right)\geq\lambda\int(\eta^2\alpha+\eta_x^2\alpha)\Phi_M\,\mathrm{d}x.
\end{align}
where $\lambda>0$ will be determined subsequently. To prove this coercivity property, we first present the following lemma, which concerns the first two terms at the right hand of \eqref{w1}.
\begin{lemma}\label{bfg1}
  For the bilinear form
  \begin{equation}\label{bf1}
   L(\eta,\eta):=\frac{2k}{b^2}\int\eta_x^2\Phi_M\alpha\,\mathrm{d}x-\frac{2k(b+1)}{b}\int\eta^2\Phi_M\alpha\,\mathrm{d}x,
  \end{equation}
  there exists a positive constant $\kappa$ such that
  \begin{equation}\label{bfw1}
    L(\eta,\eta)\geq\kappa\int(\eta^2\alpha+\eta_x^2\alpha)\Phi_M\,\mathrm{d}x,
  \end{equation}
where $\eta\in C\left(\mathbb{R},H_{\alpha}^1\right)\cap L^{\infty}\left(\mathbb{R},H_{\alpha}^1\right)$ satisfying the orthogonality conditions \eqref{woc1} and \eqref{woc2}.
\end{lemma}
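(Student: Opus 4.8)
The plan is to peel off the slowly varying weight $\Phi_M$ as a perturbation and reduce the claim to an unweighted spectral coercivity estimate for $\mathcal{L}$. Set $z=\eta\sqrt{\Phi_M}$ as in \eqref{lz}; since $L(\eta,\eta)$ is precisely the sum of the first two terms on the right of \eqref{lz},
\[
L(\eta,\eta)=(\mathcal{L}z,z)+\frac{k}{b^2}\int\eta^2\Phi_M'\alpha_x\,\mathrm{d}x+\frac{k}{b^2}\int\eta^2\Phi_M''\alpha\,\mathrm{d}x-\frac{k}{2b^2}\int\eta^2\frac{(\Phi_M')^2}{\Phi_M}\alpha\,\mathrm{d}x .
\]
Because $\Phi_M(x)=b^2Q\big(x/(M+t^2)\big)$ satisfies $|\Phi_M'/\Phi_M|+|\Phi_M''/\Phi_M|+|\Phi_M'/\Phi_M|^2\lesssim(M+t^2)^{-1}$ while $|\alpha_x/\alpha|\lesssim1$, the last three terms are bounded in absolute value by $C(M+t^2)^{-1}\int z^2\alpha\,\mathrm{d}x$, so $L(\eta,\eta)\ge(\mathcal{L}z,z)-CM^{-1}\|z\|_{L^2_\alpha}^2$. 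The function $z$ inherits the orthogonality conditions \eqref{woc1}--\eqref{woc2} only approximately; subtracting from $\sqrt{\Phi_M}$ the constant value it takes near the centre of $Q$ and exploiting the exponential decay of $Q'$ and $SQ$ against the exponential growth of $\alpha$ (so that $\int(x-x_*)^2(Q')^2/(\Phi_M\alpha)\,\mathrm{d}x$ and its $SQ$-analogue stay bounded uniformly in $M$ and $t$), one gets $|\int zQ'\,\mathrm{d}x|+|\int z\,SQ\,\mathrm{d}x|\lesssim M^{-1}\|z\|_{L^2_\alpha}$. As $\int Q'SQ\,\mathrm{d}x=0$ (both $\int Q'Q^{-1/b}\,\mathrm{d}x$ and $\int Q'Q\,\mathrm{d}x$ are integrals of exact derivatives), we may write $z=z_\perp+aQ'+cSQ$ with $z_\perp\perp_{L^2}\{Q',SQ\}$ and $|a|+|c|\lesssim M^{-1}\|z\|_{L^2_\alpha}$; using $\mathcal{L}Q'=0$ and the $L^2$-selfadjointness of $\mathcal{L}$, all $Q'$-cross terms vanish, leaving $(\mathcal{L}z,z)=(\mathcal{L}z_\perp,z_\perp)+2c(\mathcal{L}z_\perp,SQ)+c^2(\mathcal{L}SQ,SQ)$. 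Granting the unweighted coercivity
\[
(\mathcal{L}w,w)\ge\lambda_1\|w\|_{H^1_\alpha}^2\qquad\text{for all }w\in H^1_\alpha\text{ with }\ \int wQ'\,\mathrm{d}x=\int w\,SQ\,\mathrm{d}x=0 ,
\]
Young's inequality, the bound $\|z\|_{H^1_\alpha}^2\gtrsim\int(\eta^2+\eta_x^2)\Phi_M\alpha\,\mathrm{d}x$, and a choice of $M$ large in terms of $b$ (all estimates being uniform in $t$ since $M+t^2\ge M$) then give $L(\eta,\eta)\ge\kappa\int(\eta^2+\eta_x^2)\Phi_M\alpha\,\mathrm{d}x$.

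For the unweighted coercivity I would conjugate by $\sqrt\alpha$: with $\tilde w=\sqrt\alpha\,w$ one has $(\mathcal{L}w,w)=(H\tilde w,\tilde w)$ and $\|\tilde w\|_{H^1}\simeq\|w\|_{H^1_\alpha}$ (as $\alpha_x/\alpha$ is bounded). By Lemma~\ref{SPH1}, $H$ is selfadjoint with a single simple negative eigenvalue $\lambda_0<0$ and eigenfunction $\tilde\chi_0=\sqrt\alpha\,Q^{\frac1{2b}+\frac32}$, one-dimensional kernel spanned by $\sqrt\alpha\,Q'$, and the remainder of its spectrum $\ge\delta>0$. Writing $\tilde w=c_0\tilde\chi_0+c_1\sqrt\alpha\,Q'+\tilde w_+$, the condition $\int wQ'\,\mathrm{d}x=(\tilde w,\alpha^{-1/2}Q')_{L^2}=0$ fixes $c_1$ in terms of $(c_0,\tilde w_+)$ because $(\sqrt\alpha\,Q',\alpha^{-1/2}Q')=\|Q'\|_{L^2}^2\ne0$, and then $\int w\,SQ\,\mathrm{d}x=(\tilde w,\alpha^{-1/2}SQ)_{L^2}=0$ fixes $c_0$ in terms of $\tilde w_+$ alone, since $(\sqrt\alpha\,Q',\alpha^{-1/2}SQ)=\int Q'SQ\,\mathrm{d}x=0$ and $(\tilde\chi_0,\alpha^{-1/2}SQ)=\int Q^{\frac1{2b}+\frac32}SQ\,\mathrm{d}x\ne0$. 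A standard Weinstein-type computation — Cauchy--Schwarz in the form $(\tilde w_+,g)=(H^{1/2}\tilde w_+,H^{-1/2}g)$ on the positive spectral subspace — then yields $(H\tilde w,\tilde w)\gtrsim\|\tilde w_+\|_{L^2}^2\gtrsim\|\tilde w\|_{L^2}^2$ precisely when the scalar condition $\big(H^{-1}(\alpha^{-1/2}SQ),\alpha^{-1/2}SQ\big)_{L^2}<0$ holds; promoting $L^2$ to $H^1$ is immediate since $\tilde H_0$ is bounded. This scalar condition is exactly \eqref{sc}: from \eqref{ef2} together with $\alpha=Q^{-1/b-2}$ one gets the explicit identity $\mathcal{L}(Q^2)=SQ$, whence $H^{-1}(\alpha^{-1/2}SQ)=\alpha^{1/2}Q^2$ and the scalar equals $\int Q^2SQ\,\mathrm{d}x$. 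Writing $Q(x)=A\frac{1-b}{2}\big(\cosh\nu(x-x_*)\big)^{-\mu}$ with $\mu=-b/\nu>0$ and recalling $k=\big(A\frac{1-b}{2}\big)^{1/b+1}$, the two integrals in $\int Q^2SQ\,\mathrm{d}x=\frac{2k(1-b)}{b}\int Q^{2-1/b}\,\mathrm{d}x+\frac{2(b-1)}{b}\int Q^3\,\mathrm{d}x$ share the common positive prefactor $\big(A\frac{1-b}{2}\big)^3/\nu$, so $\int Q^2SQ\,\mathrm{d}x$ has the sign of $B\big(\mu(2-1/b)\big)-B(3\mu)$, where $B(q)=\int_{\mathbb{R}}(\cosh y)^{-q}\,\mathrm{d}y$ is strictly decreasing; since $2-1/b<3$ exactly when $b<-1$, this difference is positive, hence $\int Q^2SQ\,\mathrm{d}x<0$, and $\int Q^{\frac1{2b}+\frac32}SQ\,\mathrm{d}x\ne0$ follows from the same monotonicity.

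The step I expect to be the main obstacle is precisely this verification of \eqref{sc} with the correct sign, because it is the only place where the hypothesis $b<-1$ (equivalently $k>1$) is genuinely used: outside this range $B\big(\mu(2-1/b)\big)-B(3\mu)\le0$, the inequality in \eqref{sc} reverses, and $(\mathcal{L}\cdot,\cdot)$ fails to be coercive on the constrained subspace. A secondary, purely bookkeeping difficulty is the mismatch between the flat $L^2$ pairings in which \eqref{woc1}--\eqref{woc2} are stated and the $\sqrt\alpha$-conjugated picture in which $H$ is diagonal; this forces one to use both orthogonality conditions simultaneously — one to eliminate the kernel coefficient, the other the negative-eigenvalue coefficient — together with the vanishing $\int Q'SQ\,\mathrm{d}x=0$. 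Once the unweighted coercivity is established, the perturbative handling of $\Phi_M$ is routine.
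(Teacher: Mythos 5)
Your proposal is correct and follows the same overall strategy as the paper: you peel off $\Phi_M$ exactly as in \eqref{b1dc}, transfer the orthogonality conditions to $z=\eta\sqrt{\Phi_M}$ up to $O(M^{-1})$ errors, invoke an unweighted constrained coercivity for $\mathcal{L}$, and absorb the $O(M^{-1})$ remainders for $M$ large (the paper gets $O(M^{-2})$, but this is immaterial). Where the paper simply cites its Lemmas \ref{bfgg} and \ref{bfg11}, you reprove them inline: your decomposition $z=z_\perp+aQ'+cSQ$ with $|a|+|c|\lesssim M^{-1}\|z\|_{L^2_\alpha}$ is exactly the mechanism behind the almost-orthogonality Lemma \ref{bfgg} and the paper's unproved claims \eqref{ver11}--\eqref{ver12}, and your $\sqrt{\alpha}$-conjugation to $H$ plus the Weinstein-type argument is the content of Lemma \ref{bfg11}. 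The one genuinely different (and arguably cleaner) ingredient is your verification of the spectral condition \eqref{sc}: using $\mathcal{L}(Q^2)=SQ$ and the exact cancellation $k\,\Lambda^{2-1/b}=\Lambda^{3}$ with $\Lambda=A\tfrac{1-b}{2}$, the sign reduces to monotonicity of $B(q)=\int(\cosh y)^{-q}\,\mathrm{d}y$, which isolates $b<-1$ precisely and avoids the paper's auxiliary claims ($A>1$ and $\int Q^{-1/b+2}\,\mathrm{d}x>\int Q^{3}\,\mathrm{d}x$). One small slip in that step: since the common factor is $\tfrac{2(1-b)}{b}\Lambda^{3}/\nu<0$ (not the positive $\Lambda^{3}/\nu$ alone), $\int Q^{2}SQ\,\mathrm{d}x$ has the \emph{opposite} sign of $B\bigl(\mu(2-\tfrac1b)\bigr)-B(3\mu)$; your final conclusion $\int Q^{2}SQ\,\mathrm{d}x<0$ is nevertheless the correct one and matches \eqref{sc}, so this is a phrasing error rather than a gap.
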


To prove Lemma \ref{bfg1}, we consider a non-localized bilinear form
\begin{equation}\label{bfwi1}
(\mathcal{L}\eta,\eta)=\frac{2k}{b^2}\int\eta_x^2\alpha\,\mathrm{d}x-\frac{2k(b+1)}{b}\int\eta^2\alpha\,\mathrm{d}x.
  \end{equation}
The proof of Lemma \ref{bfg1} involves the following coercivity property of $(\mathcal{L}\eta,\eta),$ which is similar to Lemma E.1 in \cite{MW}.
\begin{lemma}\label{bfg11}
There exists a positive constant $\lambda_1>0$ such that
 \begin{equation}\label{bf11}
    (\mathcal{L}\eta,\eta)\geq\lambda_1\left\|\eta\right\|_{H^1_{\alpha}}^2,
  \end{equation}
for any $\eta\in C\left(\mathbb{R},H_{\alpha}^1\right)\cap L^{\infty}\left(\mathbb{R},H_{\alpha}^1\right)$ satisfying the orthogonality conditions \eqref{woc1} and \eqref{woc2}.
\end{lemma}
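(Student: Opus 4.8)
The plan is to establish \eqref{bf11} by the Weinstein-type coercivity argument (in the spirit of Lemma~E.1 in \cite{MW}), exploiting that the change of variables $\tilde f=\sqrt\alpha f$ in \eqref{epn} gives the operator identity $\mathcal{L}f=\sqrt\alpha\,H(\sqrt\alpha f)$, so that the spectrum of $\mathcal{L}$ is completely governed by the gKdV-type operator $H$ of Lemma~\ref{SPH1}. For the weighted eigenvalue problem $\mathcal{L}f=\lambda\alpha f$ this yields: a single simple negative eigenvalue $\lambda_0=-k\big(\tfrac12-\tfrac1{2b^2}\big)<0$ with even eigenfunction $\chi_0:=Q^{\frac1{2b}+\frac32}$; kernel $\operatorname{span}\{Q'\}$; and the rest of the spectrum bounded below by a positive constant $\lambda_1^\ast$ (coming from $\sigma_{ess}(H)=[k/(2b^2),\infty)$ and the finiteness of the low-lying eigenvalues of $H$). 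Since $Q'$ is odd whereas $\chi_0$, $SQ$ and $Q^2$ are even, the pairings $\int Q'\chi_0\,\mathrm{d}x$, $\int Q'SQ\,\mathrm{d}x$, $(\chi_0,Q')_\alpha$ and $(Q^2,Q')_\alpha$ all vanish. I therefore decompose a given $\eta$ in the constraint set orthogonally in $H^1_\alpha$ as $\eta=a\chi_0+bQ'+w$ with $(w,\chi_0)_\alpha=(w,Q')_\alpha=0$.

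Using $\mathcal{L}Q'=0$, $\mathcal{L}\chi_0=\lambda_0\alpha\chi_0$ and self-adjointness of $\mathcal{L}$, all cross terms cancel and $(\mathcal{L}\eta,\eta)=-|\lambda_0|\,\|\chi_0\|^2_{L^2_\alpha}\,a^2+(\mathcal{L}w,w)$. The term $(\mathcal{L}w,w)$ is coercive: setting $\tilde w=\sqrt\alpha w$, which is $L^2$-orthogonal to the negative eigenfunction of $H$ and to $\ker H=\operatorname{span}\{\sqrt\alpha Q'\}$, Lemma~\ref{SPH1} gives $(\mathcal{L}w,w)=(H\tilde w,\tilde w)\ge\lambda_1^\ast\|\tilde w\|^2_{L^2}$; since $H=\frac{2k}{b^2}(-\partial_x^2+\tilde H_0)$ has a bounded potential, a routine bootstrap controls $\|\tilde w_x\|^2_{L^2}$ by $(\mathcal{L}w,w)$ as well, and, the norms being equivalent, $(\mathcal{L}w,w)\ge c\|w\|^2_{H^1_\alpha}$ for some $c>0$; in particular no compactness argument is needed. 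It remains to absorb the negative term, i.e. to bound $a^2$ by a small multiple of $(\mathcal{L}w,w)$. For this I use the identity $SQ=\mathcal{L}(Q^2)$ (proved below) and write $Q^2=\alpha_0\chi_0+W$ with $(W,\chi_0)_\alpha=(W,Q')_\alpha=0$ (the $Q'$-component vanishing by parity); then $\int\eta SQ\,\mathrm{d}x=0$ reduces, using self-adjointness and $(w,\chi_0)_\alpha=0$, to $a\lambda_0\alpha_0\|\chi_0\|^2_{L^2_\alpha}+(\mathcal{L}w,W)=0$, so by the Cauchy--Schwarz inequality for the positive semidefinite form $(\mathcal{L}\cdot,\cdot)$ on $\{(\,\cdot\,,\chi_0)_\alpha=(\,\cdot\,,Q')_\alpha=0\}$ one gets $a^2\le(\mathcal{L}w,w)(\mathcal{L}W,W)\big/\big(\lambda_0^2\alpha_0^2\|\chi_0\|^4_{L^2_\alpha}\big)$. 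Substituting this together with $(\mathcal{L}^{-1}SQ,SQ)=(\mathcal{L}Q^2,Q^2)=\alpha_0^2\lambda_0\|\chi_0\|^2_{L^2_\alpha}+(\mathcal{L}W,W)$ yields $(\mathcal{L}\eta,\eta)\ge-\,(\mathcal{L}w,w)\,(\mathcal{L}^{-1}SQ,SQ)\big/\big(|\lambda_0|\alpha_0^2\|\chi_0\|^2_{L^2_\alpha}\big)=:\delta_0(\mathcal{L}w,w)$; the spectral condition \eqref{sc} makes $\delta_0>0$ (and forces $\alpha_0\ne0$, which is precisely the required non-orthogonality of $SQ$ to the eigenfunction of $\mathcal{L}$). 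Finally $\int\eta Q'\,\mathrm{d}x=0$ gives $|b|\lesssim\|w\|_{L^2}\lesssim\|w\|_{H^1_\alpha}$ (since $\alpha$ is bounded away from $0$) and $a^2\lesssim(\mathcal{L}w,w)\lesssim\|w\|^2_{H^1_\alpha}$, so $\|\eta\|^2_{H^1_\alpha}\lesssim\|w\|^2_{H^1_\alpha}$; combined with $(\mathcal{L}\eta,\eta)\ge\delta_0 c\|w\|^2_{H^1_\alpha}$ this proves \eqref{bf11}.

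The one genuinely non-routine step — the main obstacle — is verifying \eqref{sc} and, along the way, the identity $SQ=\mathcal{L}(Q^2)$ used above. Applying $\mathcal{L}f=\sqrt\alpha H(\sqrt\alpha f)$ with $f=Q^2$ and using Lemma~\ref{SPH1}(5) (equation \eqref{ef2}) together with $\alpha=Q^{-\frac1b-2}$ (see \eqref{wei}), one gets $\mathcal{L}(Q^2)=\alpha\big(\tfrac{2k(1-b)}{b}Q^2+\tfrac{2(b-1)}{b}Q^{\frac1b+3}\big)=\tfrac{2k(1-b)}{b}Q^{-\frac1b}+\tfrac{2(b-1)}{b}Q=SQ$; since $SQ\perp_{L^2}Q'$ by parity, $\mathcal{L}^{-1}(SQ)=Q^2$ modulo the kernel and $\int\mathcal{L}^{-1}(SQ)\,SQ\,\mathrm{d}x=\int Q^2\,SQ\,\mathrm{d}x=\tfrac{2(1-b)}{b}\big(k\!\int Q^{2-\frac1b}\,\mathrm{d}x-\int Q^3\,\mathrm{d}x\big)$. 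As $\tfrac{2(1-b)}{b}<0$, \eqref{sc} is equivalent to $k\int Q^{2-1/b}\,\mathrm{d}x>\int Q^3\,\mathrm{d}x$. Inserting $Q=(A\tfrac{1-b}{2})(\cosh\nu(x-x_*))^{b/\nu}$ and $k=(A\tfrac{1-b}{2})^{\frac1b+1}$, the amplitude $A$ cancels from both sides, and via $\int_{\mathbb R}\operatorname{sech}^s u\,\mathrm{d}u=\sqrt\pi\,\Gamma(s/2)/\Gamma(\tfrac{s+1}{2})$ the inequality becomes $\Gamma(p_1/2)/\Gamma(\tfrac{p_1+1}{2})>\Gamma(p_2/2)/\Gamma(\tfrac{p_2+1}{2})$ with $p_1=\tfrac{2(2b-1)}{b+1}$ and $p_2=\tfrac{6b}{b+1}$. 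Since $s\mapsto\Gamma(s/2)/\Gamma(\tfrac{s+1}{2})=\pi^{-1/2}\!\int_{\mathbb R}\operatorname{sech}^s u\,\mathrm{d}u$ is strictly decreasing on $(0,\infty)$, this holds precisely when $0<p_1<p_2$; a direct computation gives $p_1-p_2=-2$, while $p_1,p_2>0$ exactly in the lefton regime $b<-1$ (where $\nu>0$ and the integrals converge), which is our standing hypothesis. I expect this closed-form evaluation — identifying $\mathcal{L}^{-1}(SQ)$ from Lemma~\ref{SPH1}(5) and reducing \eqref{sc} to the Gamma-function comparison, with the bookkeeping that makes $A$ drop out — to be the technical heart of the proof; everything else is the standard Weinstein machinery described above.
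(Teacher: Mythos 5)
Your proposal is correct, and its skeleton is the same as the paper's: coercivity is reduced to the three facts that $SQ$ is not orthogonal to the negative eigenfunction $\chi_0=Q^{\frac1{2b}+\frac32}$, that $SQ\perp\ker\mathcal{L}$, and that the spectral condition \eqref{sc} holds, all hinging on the identity $\mathcal{L}(Q^2)=SQ$ obtained from Lemma \ref{SPH1}(5) via $\mathcal{L}f=\sqrt\alpha\,H(\sqrt\alpha f)$. The differences are in execution. The paper stops after verifying these three hypotheses and invokes the abstract Weinstein lemma (Lemma E.1 of \cite{MW}), whereas you re-derive that step explicitly (decomposition $\eta=a\chi_0+bQ'+w$ in $L^2_\alpha$, Cauchy--Schwarz for the nonnegative form on $\{\chi_0,Q'\}^{\perp_\alpha}$, absorption with constant $-(\mathcal{L}^{-1}SQ,SQ)/\big(|\lambda_0|\alpha_0^2\|\chi_0\|^2_{L^2_\alpha}\big)$, and the correct handling of the unweighted constraints \eqref{woc1}--\eqref{woc2} against the weighted decomposition); this is standard but sound, and your observation that \eqref{sc} automatically forces $\alpha_0\neq0$ subsumes the paper's separate verification of $(SQ,\chi_0)_\alpha\neq0$. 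More substantively, your verification of \eqref{sc} takes a genuinely different and more robust route: the paper writes $(\mathcal{L}^{-1}SQ,SQ)=\tfrac{2k(1-b)}{b}\int Q^{-\frac1b+2}\,\mathrm{d}x+\tfrac{2(b-1)}{b}\int Q^3\,\mathrm{d}x$ and concludes negativity from $k>1$ (which uses $A>1$) together with the unproved comparison $\int Q^{-\frac1b+2}\,\mathrm{d}x>\int Q^3\,\mathrm{d}x$, a comparison that is not uniform in the amplitude (the two sides scale like $(A\tfrac{1-b}{2})^{2-\frac1b}$ and $(A\tfrac{1-b}{2})^{3}$ respectively, so it fails for $A$ large), whereas you evaluate both integrals in closed form, note that $k\,(A\tfrac{1-b}{2})^{2-\frac1b}=(A\tfrac{1-b}{2})^{3}$ so the amplitude cancels exactly, and reduce \eqref{sc} to the strict monotonicity of $s\mapsto\int\operatorname{sech}^s u\,\mathrm{d}u$ with $p_1=p_2-2$, which gives \eqref{sc} for every $A>0$ and every $b<-1$. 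Two cosmetic points: you reuse the letter $b$ for a decomposition coefficient, and the positive lower bound on the spectrum of $H$ away from $\{\lambda_0,0\}$ deserves the one-line remark that discrete eigenvalues below $\sigma_{ess}(H)=\left[\tfrac{k}{2b^2},+\infty\right)$ can accumulate only at its (positive) bottom; neither affects the argument.
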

\begin{proof}
It is found  that
\begin{equation}\label{e1c1}
  (SQ,\chi_0)_{\alpha}=\frac{1}{\lambda_0}(SQ,\mathcal{L}\chi_0)=\frac{1}{\lambda_0}(\mathcal{L}(SQ),\chi_0)\neq0,
\end{equation}
where
\begin{align*}
  \mathcal{L}(SQ)=&\mathcal{L}\left(\frac{2k(1-b)}{b}Q^{-\frac{1}{b}}+\frac{2(b-1)}{b}Q\right)\\
  =&-\frac{4k(1-b^2)(b+2)}{b^3}Q^{-\frac{1}{b}}\alpha+\frac{6(2b+1)(1-b)}{b^2}\alpha-\frac{2(b^2-1)}{kb^2}.
\end{align*}
According to the expressions of $Q$ and $SQ$, we obtain that
$$\left(SQ,Q'\right)_{\alpha}=\frac{2k(1-b)}{b}\int Q^{-\frac{1}{b}}Q'\alpha\,\mathrm{d}x+\frac{2(b-1)}{b}\int QQ'\alpha\,\mathrm{d}x=0$$
which implies that $SQ\in N^{\bot}(\mathcal{L}).$

We finally verify the spectral properties $(\mathcal{L}^{-1}SQ,SQ)<0$. In fact, we deduce from $b<-1$ and $A>1$ that $\left|\frac{2k(1-b)}{b}\right|>\left|\frac{2(b-1)}{b}\right|,$
which combining with the fact $\int Q^{-\frac{1}{b}+2}\,\mathrm{d}x> \int Q^3\,\mathrm{d}x>0$ and $\mathcal{L}(Q^2)=SQ$ yield that
$$\left(\mathcal{L}^{-1}SQ,SQ\right)=\frac{2k(1-b)}{b}\int Q^{-\frac{1}{b}+2}\,\mathrm{d}x+\frac{2(b-1)}{b}\int Q^3\,\mathrm{d}x<0.$$
This completes the proof of Lemma \ref{bfg11}.
\end{proof}
Building on Lemma \ref{bfg11}, we proceed in accordance with the idea in \cite{MM9} to weaken its conditions, thereby proposing the following lemma.
\begin{lemma}\label{bfgg}
  For $\theta>0$, if $\eta$ satisfies
\begin{equation}\label{cpc}
  \left|\left(\eta,\frac{SQ}{\left\|SQ\right\|_{L^2}}\right)\right|+ \left|\left(\eta,\frac{Q'}{\left\|Q'\right\|_{L^2}}\right)\right|\leq \theta\left\|\eta\right\|_{H_{\alpha}^1}, \end{equation} then there holds,
  \begin{equation}\label{bf12}
    (\mathcal{L}\eta,\eta)\geq\frac{3\lambda_1}{4}\left\|\eta\right\|_{H^1_{\alpha}}^2.
  \end{equation}
\end{lemma}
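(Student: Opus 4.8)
\emph{Proof strategy for Lemma~\ref{bfgg}.} The plan is to show that there is $\theta_0=\theta_0(b,\lambda_1)>0$ so that \eqref{bf12} holds for all $\theta\in(0,\theta_0)$, by writing $\eta$ as a small perturbation of a function satisfying the exact orthogonality conditions \eqref{woc1}--\eqref{woc2} and reducing everything to Lemma~\ref{bfg11}. The correct correction directions turn out to be $Q^2$ and $Q'$ rather than $SQ$ and $Q'$: one checks from the explicit profile of $Q$ and the exponents in \eqref{wei} that $Q^2,Q'\in H_\alpha^1$ for every $b<-1$ (whereas $SQ$ need not lie in $H_\alpha^1$ once $b\le-3/2$), that by parity ($Q$ even, $Q'$ odd about $x_*$) one has $\int Q^2Q'\,\mathrm{d}x=\int Q'SQ\,\mathrm{d}x=0$, and that, by the spectral computation at the end of the proof of Lemma~\ref{bfg11} together with $\mathcal{L}(Q^2)=SQ$,
$$\int Q^2\,SQ\,\mathrm{d}x=\left(\mathcal{L}^{-1}SQ,SQ\right)<0,\qquad\text{in particular }\ \int Q^2\,SQ\,\mathrm{d}x\neq0 .$$

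First I would set up the decomposition. Given $\eta$ as in the statement (for each fixed $t$, an element of $H_\alpha^1$), define $\beta_2=\|Q'\|_{L^2}^{-2}\int\eta Q'\,\mathrm{d}x$ and $\beta_1=\left(\int Q^2SQ\,\mathrm{d}x\right)^{-1}\int\eta SQ\,\mathrm{d}x$, and put $\tilde\eta:=\eta-\beta_1Q^2-\beta_2Q'$. Using $\int Q^2Q'\,\mathrm{d}x=\int Q'SQ\,\mathrm{d}x=0$ one sees $\int\tilde\eta Q'\,\mathrm{d}x=\int\tilde\eta SQ\,\mathrm{d}x=0$, i.e. $\tilde\eta$ satisfies \eqref{woc1}--\eqref{woc2}. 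Since $SQ$ and $Q'$ are bounded and $H_\alpha^1\hookrightarrow L^1$ (because $\alpha^{-1}$ is integrable), the maps $\eta\mapsto\int\eta SQ\,\mathrm{d}x$ and $\eta\mapsto\int\eta Q'\,\mathrm{d}x$ are bounded functionals on $H_\alpha^1$, so the hypothesis \eqref{cpc} yields $|\beta_1|+|\beta_2|\le C_0\,\theta\,\|\eta\|_{H_\alpha^1}$ with $C_0=C_0(b)$, and hence also $\|\tilde\eta\|_{H_\alpha^1}\ge(1-C_1\theta)\|\eta\|_{H_\alpha^1}$ with $C_1=C_1(b)$.

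Next I would expand $(\mathcal{L}\eta,\eta)$ bilinearly, $\eta=\tilde\eta+\beta_1Q^2+\beta_2Q'$, exploiting self-adjointness of $\mathcal{L}$ together with the two algebraic facts $\mathcal{L}Q'=0$ (kernel of $\mathcal{L}$, cf. \eqref{ker}) and $\mathcal{L}Q^2=SQ$. All cross terms involving $Q'$ vanish because $\mathcal{L}Q'=0$; and, crucially, $(\mathcal{L}\tilde\eta,Q^2)=\int\tilde\eta\,\mathcal{L}Q^2\,\mathrm{d}x=\int\tilde\eta\,SQ\,\mathrm{d}x=0$ by the orthogonality of $\tilde\eta$. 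What survives is
$$(\mathcal{L}\eta,\eta)=(\mathcal{L}\tilde\eta,\tilde\eta)+\beta_1^2\,(\mathcal{L}Q^2,Q^2)=(\mathcal{L}\tilde\eta,\tilde\eta)+\beta_1^2\int Q^2\,SQ\,\mathrm{d}x\ \ge\ (\mathcal{L}\tilde\eta,\tilde\eta),$$
since $\int Q^2SQ\,\mathrm{d}x<0$. Applying Lemma~\ref{bfg11} to $\tilde\eta$ and then the comparison $\|\tilde\eta\|_{H_\alpha^1}\ge(1-C_1\theta)\|\eta\|_{H_\alpha^1}$ gives $(\mathcal{L}\eta,\eta)\ge\lambda_1(1-C_1\theta)^2\|\eta\|_{H_\alpha^1}^2$, which exceeds $\tfrac{3\lambda_1}{4}\|\eta\|_{H_\alpha^1}^2$ once $\theta\le\theta_0$ with $\theta_0$ small, proving \eqref{bf12}.

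The main point — and the only place where anything beyond bookkeeping is needed — is the structural observation that $Q^2$ is the right correction direction: it lies in $H_\alpha^1$ for the whole range $b<-1$, and the identity $\mathcal{L}Q^2=SQ$ simultaneously annihilates the dangerous cross term $(\mathcal{L}\tilde\eta,Q^2)$ and turns the remaining quadratic term into $\beta_1^2(\mathcal{L}^{-1}SQ,SQ)$, whose favorable sign is exactly the spectral condition established in Lemma~\ref{bfg11} and available only because $b<-1$ (equivalently $k>1$). The rest — finiteness of the weighted norms and pairings, and the embedding $H_\alpha^1\hookrightarrow L^1$ — is routine verification from the explicit weight $\alpha$ and profile $Q$.
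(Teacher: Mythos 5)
Your argument has the same skeleton as the paper's proof: split $\eta$ into a piece satisfying the exact orthogonality conditions \eqref{woc1}--\eqref{woc2} plus small corrections whose coefficients are $O(\theta\|\eta\|_{H^1_\alpha})$ by \eqref{cpc}, invoke Lemma~\ref{bfg11} on the orthogonal piece, and absorb the remainder for $\theta$ small. The genuine difference is the choice of correction directions: the paper decomposes along $SQ/\|SQ\|_{L^2}$ and $Q'/\|Q'\|_{L^2}$ and then estimates the cross terms $(\mathcal{L}\eta_1,\eta_2)$ and $(\mathcal{L}\eta_2,\eta_2)$ by $\theta\|\eta\|^2_{H^1_\alpha}$, which implicitly uses $SQ\in H^1_\alpha$; you instead correct along $Q^2$ and $Q'$, so that $\mathcal{L}Q^2=SQ$, $\mathcal{L}Q'=0$ and the orthogonality of $\tilde\eta$ make every cross term vanish identically, and $Q^2\in H^1_\alpha$ for the whole range $b<-1$ (your observation that $\int(SQ)^2\alpha\,\mathrm{d}x$ may diverge for $b\le -3/2$ is correct, so your variant is actually more robust in that regime). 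The parity identities $\int Q^2Q'\,\mathrm{d}x=\int Q'\,SQ\,\mathrm{d}x=0$ and the nondegeneracy $\int Q^2SQ\,\mathrm{d}x\neq0$ needed to define $\beta_1,\beta_2$ are all verified, and $\tilde\eta$ does satisfy \eqref{woc1}--\eqref{woc2}, so Lemma~\ref{bfg11} applies.

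There is, however, one concrete slip. From $\int Q^2SQ\,\mathrm{d}x=(\mathcal{L}^{-1}SQ,SQ)<0$ and $\beta_1^2\ge0$ you get
\begin{equation*}
(\mathcal{L}\eta,\eta)=(\mathcal{L}\tilde\eta,\tilde\eta)+\beta_1^2\int Q^2SQ\,\mathrm{d}x\ \le\ (\mathcal{L}\tilde\eta,\tilde\eta),
\end{equation*}
i.e.\ the inequality goes in the \emph{opposite} direction to the one you wrote: the correction term is nonpositive and cannot simply be dropped. The proof survives because $|\beta_1|\lesssim\theta\|\eta\|_{H^1_\alpha}$ gives $\beta_1^2\bigl|\int Q^2SQ\,\mathrm{d}x\bigr|\le C\theta^2\|\eta\|^2_{H^1_\alpha}$, which together with $(\mathcal{L}\tilde\eta,\tilde\eta)\ge\lambda_1(1-C_1\theta)^2\|\eta\|^2_{H^1_\alpha}$ yields \eqref{bf12} for $\theta\le\theta_0$ small; you should state it this way. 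Relatedly, your closing remark overstates the role of the sign of $(\mathcal{L}^{-1}SQ,SQ)$ in \emph{your} argument: once Lemma~\ref{bfg11} is taken as given, only $\int Q^2SQ\,\mathrm{d}x\neq0$ (to define $\beta_1$) and the smallness of the correction matter; the negativity is what drives the proof of Lemma~\ref{bfg11} itself, not this reduction. Finally, note that both your version and the paper's require $\theta$ small (the paper's ``for $\theta>0$'' is used with $\theta,\theta^2\le\lambda_1/16$), so your explicit $\theta\le\theta_0(b,\lambda_1)$ is the honest formulation.
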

\begin{proof}
  Assume that $\eta\in C\left(\mathbb{R},H_{\alpha}^1\right)\cap L^{\infty}\left(\mathbb{R},H_{\alpha}^1\right)$ satisfies \eqref{cpc} and $\eta_{1}(x,t)$ satisfies the orthogonality conditions $(\eta_{1},SQ)=(\eta_{1},Q')=0.$
  The decomposition of $\eta$ can be given as
  \begin{equation}\label{dec1}
 \eta=\eta_{1}+a_1\frac{SQ}{\left\|SQ\right\|_{L^2}}+a_2\frac{Q'}{\left\|Q'\right\|_{L^2}}:=\eta_{1}+\eta_{2}.
 \end{equation}
It reveals from the definition of $\mathcal{L}$ that
\begin{equation}\label{bde1}
(\mathcal{L}\eta,\eta)=(\mathcal{L}\eta_1,\eta_1)+(\mathcal{L}\eta_2,\eta_2)+2(\mathcal{L}\eta_1,\eta_2).
\end{equation}
We deduce from  \eqref{cpc}, \eqref{dec1} and Lemma \ref{bfg11} that
\begin{equation}\label{ae1}
  |a_1|+|a_2|\leq \theta\left\|\eta\right\|_{H_{\alpha}^1},
\end{equation}
and
\begin{equation}\label{w11}
    (\mathcal{L}\eta_1,\eta_1)\geq\lambda_1\left\|\eta_1\right\|^2_{H_{\alpha}^1}.
\end{equation}
Moreover, it follows from \eqref{dec1} and \eqref{ae1} that
\begin{equation}\label{we1}
 \left\|\eta\right\|^2_{H_{\alpha}^1}\leq  \left\|\eta_1\right\|^2_{H_{\alpha}^1}+\theta\left\|\eta\right\|^2_{H_{\alpha}^1}~~\text{and}~~\left\|\eta_1\right\|^2_{H_{\alpha}^1}\leq\left\|\eta\right\|^2_{H_{\alpha}^1},
\end{equation}
which together with \eqref{w11} to obtain that
\begin{equation}\label{c2.52}
   (\mathcal{L}\eta_1,\eta_1)\geq\frac{7\lambda_1}{8}\left\|\eta\right\|^2_{H_{\alpha}^1}.
\end{equation}
Next, it remains to estimate $(\mathcal{L}\eta_2,\eta_2).$ In view of the properties of $Q$ and \eqref{ae1}, we have
\begin{equation}\label{c2.53}
  \left|(\mathcal{L}\eta_2,\eta_2)\right|\leq|a_1|^2+|a_2|^2\leq
  \theta^2\left\|\eta\right\|^2_{H_{\alpha}^1}\leq\frac{\lambda_1}{16}\left\|\eta\right\|^2_{H_{\alpha}^1}
\end{equation}
and
\begin{equation}\label{c2.54}
  \left|(\mathcal{L}\eta_1,\eta_2)\right|\leq\left\|\eta_1\right\|_{H_{\alpha}^1}\left\|\eta_2\right\|_{H_{\alpha}^1}\leq
  \left\|\eta_1\right\|_{H_{\alpha}^1}\left(|a_1|+|a_2|\right)\leq
  \theta\left\|\eta\right\|^2_{H_{\alpha}^1}\leq\frac{\lambda_1}{16}\left\|\eta\right\|^2_{H_{\alpha}^1}.
\end{equation}
We conclude the proof of \eqref{bf12} follows gathering \eqref{bde1}, \eqref{c2.52}, \eqref{c2.53} and \eqref{c2.54}.
The proof of Lemma \ref{bfgg} is complete.
\end{proof}

Lemma \ref{bfgg} not only constitutes the fundamental cornerstone of the linear Liouville problem, but also directly extends to the nonlinear Liouville theorem without requiring additional modifications by relaxing the orthogonality constraint. Thus, this lemma effectively bridges the gap between linear and nonlinear Liouville theory.

With Lemmas \ref{bfg11} and \ref{bfgg} in hand, we are now prepared to prove Lemma \ref{bfg1}.
\begin{proof}[Proof of Lemma \ref{bfg1}]
 Assume that $\eta(x,t)\in C\left(\mathbb{R},H_{\alpha}^1\right)\cap L^{\infty}\left(\mathbb{R},H_{\alpha}^1\right)$ satisfies the orthogonality conditions \eqref{woc1} and \eqref{woc2}.
 It then transpires from the definition of $L(\eta,\eta)$ in \eqref{bf1} that
 \begin{equation}\label{b1dc}
   L(\eta,\eta)=\left(\mathcal{L}\sqrt{\Phi_M}\eta,\sqrt{\Phi_M}\eta\right)-\int\left(\partial_x\sqrt{\Phi_M}\right)^2\eta^2\alpha\,\mathrm{d}x
   -\int \Phi_M'\eta\eta_x\alpha\,\mathrm{d}x.
 \end{equation}
Provided that verification of  $\sqrt{\Phi_M}\eta$ satisfies \eqref{cpc}, we can obtain the desired coercivity results. It follows from the definition of $\Phi_M(x)$ and the properties of $Q$ that
 \begin{equation}\label{ver11}
   \left|\int\sqrt{\Phi_M}\eta\frac{SQ}{\left\|SQ\right\|_{L^2}}\,\mathrm{d}x\right|\leq\theta\left\|\sqrt{\Phi_M}\eta\right\|_{L_{\alpha}^2}
 \end{equation}
 and
 \begin{equation}\label{ver12}
   \left|\int\sqrt{\Phi_M}\eta\frac{Q'}{\left\|Q'\right\|_{L^2}}\,\mathrm{d}x\right|\leq\theta\left\|\sqrt{\Phi_M}\eta\right\|_{L_{\alpha}^2},
 \end{equation}
which in turn imply that
\begin{equation}\label{c2.58}
  \left(\mathcal{L}\sqrt{\Phi_M}\eta,\sqrt{\Phi_M}\eta\right)\geq \frac{3\lambda_1}{4}\left\|\sqrt{\Phi_M}\eta\right\|_{H_{\alpha}^1}^2.
\end{equation}
For the last two terms of the right hand of \eqref{b1dc}, invoking the properties of $\Phi_M(x)$ in \eqref{gine}, we get that
\begin{equation}\label{c2.59}
 \left|\int\left(\partial_x\sqrt{\Phi_M}\right)^2\eta^2\alpha\,\mathrm{d}x\right|\leq\frac{C}{M^2}\int \eta^2\alpha\Phi_M\,\mathrm{d}x\leq\frac{\lambda_1}{8}\int \eta^2\alpha\Phi_M\,\mathrm{d}x,
\end{equation}
and
\begin{equation}\label{c2.60}
\left|\int \Phi_M'\eta\eta_x\alpha\,\mathrm{d}x\right|\leq\frac{C}{M^2}\int (\eta^2\alpha+\eta_x^2\alpha)\Phi_M\,\mathrm{d}x\leq\frac{\lambda_1}{8}\int (\eta^2\alpha+\eta_x^2\alpha)\Phi_M\,\mathrm{d}x.
\end{equation}
We conclude the proof of \eqref{bfw1} from \eqref{b1dc} and \eqref{c2.58}--\eqref{c2.60}. Here, we can choose $\kappa=\lambda_1/2,$ where $\lambda_1$ is given in Lemma \ref{bfg11}. This completes the proof of Lemma \ref{bfg1}.
\end{proof}
We are now in the position to obtain the {\bf coercivity property}  in \eqref{lzze}.
\begin{proof}[Proof of \eqref{lzze}]
To prove \eqref{lzze}, it remains to consider the last three terms on the right hand of \eqref{lz}. We first note that
$$\left|\frac{4b(1+2b)}{M(b+1)^2}\tanh(\nu x)\right|\leq \frac{4(1-b)b^2}{M(b+1)^2}\quad\text{for}\quad b\in(-2,-1)$$
and
$$\left|\frac{4b(1+2b)}{M(b+1)^2}\tanh(\nu x)\right|\leq 4A\frac{(1-b)}{2M}b^2\quad\text{for}\quad b\leq-2,$$
so that we choose $M=\max\left\{1,\frac{32A(1-b)}{\lambda_1(b+1)^2},\frac{32A(1-b)}{\lambda_1}\right\}$ for $b<-1.$  We deduce from \eqref{vgf}--\eqref{gine} that
\begin{align}\label{3lz}
  \left|-\frac{k}{b^2}\int\eta^2\Phi_M'\alpha_x\,\mathrm{d}x-\frac{k}{b^2}\int\eta^2\Phi_M''\alpha\,\mathrm{d}x+\frac{k}{2b^2}\int\eta^2\frac{(\Phi_M')^2}{\Phi_M}\alpha\,\mathrm{d}x\right|\leq
 \frac{\lambda_1}{4}\int\eta^2\alpha\Phi_M\,\mathrm{d}x.
\end{align}
 We deduce gathering \eqref{bfw1} in Lemma \ref{bfg1} and \eqref{3lz} that \eqref{lzze} holds true with $\lambda=\lambda_1/4$.
\end{proof}

To proceed, we need to verify the following.\\
 For $\frac12<s<1$ and $b<-1,$ there exists a positive constant $C$ such that
\begin{align}\label{wlz}
    \left|\frac12\frac{1}{(M+t^2)^{s}}\frac{\mathrm{d}}{\mathrm{d}t}\int \eta^2\phi\,\mathrm{d}x+\left(\mathcal{L}z,z\right)\right|\leq \frac{C}{(M+t^2)^{s}}\left\|\eta\right\|^2_{H^1_{\alpha}}.
\end{align}
In fact,  in viewing of \eqref{w1}, for $\lambda_1>0$, it reveals from the Cauchy-Schwarz inequality that
\begin{align}\label{IE1}
  \left|\frac{k(2b-1)}{b(1-b)}\int Q^{-\frac{1}{b}-1}Q'\eta^2\phi\,\mathrm{d}x \right|\leq\frac{k(2b-1)}{b(1-b)^2}\left|\int Q'Q\eta^2\alpha\phi\,\mathrm{d}x \right|\leq\frac{\lambda_1}{16}\left\|\eta\right\|^2_{H^1_{\alpha}},
\end{align}
where we note that $\frac{k(2b-1)}{b(1-b)^2}\leq\frac{3}{4}$ and $|Q|\leq A\frac{1-b}{2}\leq\frac{\lambda_1}{16}.$ In addition, we also get that
\begin{align}\label{IE2}
  \left|\frac{k}{(1-b)}\int Q^{-\frac{1}{b}}\eta^2\phi'\,\mathrm{d}x \right|\leq\frac{k}{(1-b)^2}\left|\int Q^2\eta^2\alpha\phi'\,\mathrm{d}x \right|\leq\frac{\lambda_1}{16}\left\|\eta\right\|^2_{H^1_{\alpha}},
\end{align}
and
\begin{align}\label{IE3}
  \left|\int \eta\phi(1-\partial_x^2)^{-1}\left(bQ\partial_x\eta+(b-1)Q'\eta\right)\,\mathrm{d}x \right|\leq\frac{\lambda_1}{8}\left\|\eta\right\|_{L^{\infty}}\int\eta^2\phi^2\,\mathrm{d}x\leq
  \frac{\lambda_1}{4}\left\|\eta\right\|^2_{H^1_{\alpha}},
\end{align}
where $\phi^2=(Q')^2=b^2\left(Q^2-Q^{\frac{1}{b}+3}/k\right)\lesssim \Phi$.
The proof of \eqref{wlz} is inferred gathering \eqref{w1}, \eqref{lz}, \eqref{3lz} and \eqref{IE1}--\eqref{IE3}.

With \eqref{lzze} and \eqref{wlz} in hand, we can conclude the proof of Theorem \ref{llt}.
\begin{proof}[Proof of Theorem \ref{llt}]
Invoking \eqref{lzze} and \eqref{wlz}, there exists a positive constant $C_{b,\lambda_1}$ such that
\begin{equation}\label{c2.72}
  -\frac12\frac{1}{(M+t^2)^{s}}\frac{\mathrm{d}}{\mathrm{d}t}\int \eta^2\phi\,\mathrm{d}x\geq\frac{\lambda_1}{4}\int(\eta^2\alpha+\eta_x^2\alpha)\Phi_M\,\mathrm{d}x-\frac{C_{b,\lambda_1}}{(M+t^2)^{s}}\left\|\eta\right\|^2_{H^1_{\alpha}}.
\end{equation}
Integrating \eqref{c2.72} and using the boundedness of $\phi(x),$ we obtain that
\begin{equation}\label{c2.73}
  \int_{\mathbb{R}}\int \eta^2(x,t)\Phi_M(x)\,\mathrm{d}x\,\mathrm{d}t\leq\int_{\mathbb{R}}\int \eta^2(x,t)\alpha(x)\Phi_M(x)\,\mathrm{d}x\,\mathrm{d}t\leq C_{b,M,\lambda_1}\sup_t\|\eta(\cdot,t)\|^2_{L^2}<+\infty.
\end{equation}
Thus, there exists a time sequence $t_n\to\infty~(n\to\infty)$ such that
\begin{equation}\label{c2.74}
  \int \eta^2(x,t_n)\Phi_M\,\mathrm{d}x\to0,
\end{equation}
as $n\to\infty.$ Moreover, for some $A>0,$ one has $|\Phi_M(x)|\geq C_{A,b}>0$ if $|x|\leq A$.  Then,  we derive from \eqref{hode} and \eqref{c2.74} that
\begin{align}\label{c2.75}
  \int \eta^2(x,t_n)\,\mathrm{d}x=&\int_{|x|>A} \eta^2(x,t_n)\,\mathrm{d}x+\int_{|x|\leq A} \eta^2(x,t_n)\,\mathrm{d}x\nonumber\\
  \leq&\frac{1}{C_A}\int \eta^2(x,t_n)\Phi_M(x)\,\mathrm{d}x+\epsilon,
\end{align}
which leads to
\begin{equation}\label{c2.76}
  \int \eta^2(x,t_n)\,\mathrm{d}x\to0~~~~\text{as}~n\to\infty.
\end{equation}
In a similar way, for a sequence $s_n\to\infty,$ we get that
\begin{equation}\label{c2.77}
  \int \eta^2(x,-s_n)\,\mathrm{d}x\to0~~~~\text{as}~n\to\infty.
\end{equation}
Integrating \eqref{c2.72} on $(-s_n,t_n)$ and using \eqref{c2.76} and \eqref{c2.77}, we have
\begin{equation}\label{c2.78}
  \int_{-s_n}^{t_n}\int \eta^2(x,t)\Phi_M\,\mathrm{d}x\,\mathrm{d}t\leq \frac{C}{\lambda_1}\left(\int \eta^2(x,t_n)\,\mathrm{d}x+\int \eta^2(x,-s_n)\,\mathrm{d}x\right),
\end{equation}
which leads to
\begin{equation}\label{c2.79}
   \int_{-s_n}^{t_n}\int \eta^2(x,t)\Phi_M\,\mathrm{d}x\,\mathrm{d}t=0~~~~\text{as}~n\to\infty.
\end{equation}
Notice that the positive characteristics of $\Phi_M(x)$ implies that $\eta(x,t)=0$ for $(x,t)\in\mathbb{R}^2.$
The definitions of $\eta(x,t)$ in \eqref{dp} gives that there exists $a_0(t), b_0(t)\in\mathbb{R}$  such that
\begin{equation}\label{c2.80}
v=a_0(t)Q'+b_0(t)Q.
\end{equation}
Moreover, it is  worth noting that $\mathcal{B}(Q)\mathcal{L}Q=b(b-1)Qq'+(b-1)Q'q=0,$
relying on \eqref{ker} and the definition of $v(x,t)$ in \eqref{l2c}, we have
$$a_0'Q'+b_0'Q=\frac{1}{1-b}\mathcal{B}(Q)\mathcal{L}\left(a_0Q'+b_0Q\right)=0.$$
This implies that $a_0'(t)=b_0'(t)=0,$ that is $v(x,t)=a_0Q'+b_0Q$ for two real constants $a_0,b_0.$
This completes the proof of Theorem \ref{llt}.
\end{proof}

\section{Nonlinear Liouville property}\label{s3}
The focus  in this section is  to prove  the nonlinear Liouville property Theorem \ref{nlt}.  We start with a modulation argument (Lemma \ref{zlem3.1}) for solutions close to the leftons, we then construct a space transition parameter $\rho(t)$ such that the error \eqref{z3.3} satisfies some orthogonal conditions and is bounded in $H^1_{\alpha}$. Next, we derive monotonicity property for the localized version of conservation laws in Lemma \ref{uvm1} and Lemma \ref{mener}, a direct consequence of which is the uniformly exponential decay of the solutions and the error.  We analyze the localized error with transformed variable and present the coercivity property of the quadratic form \eqref{lwe}. Gathering with \eqref{wlw}, we show that the localized error vanishes, implying that the nonlinear Liouville property holds true.

Let us start with the modulation of solution of $b$-family of equations close to the leftons.
\begin{lemma}\label{zlem3.1}
 Let $\epsilon_0>0,K_0>0$ and $0<\epsilon \leq \epsilon_0$. Assume that $m(x,t)$ is the global solution of $b$-family of equations with the initial data $0<m_0(x)\in\mathcal{Z}$ satisfying
\begin{equation}\label{z3.2}
\inf _{y \in \mathbb{R}}\|m(\cdot, t)-\gamma Q(\cdot-y)\|_{H^1} \leq \epsilon.
\end{equation}
Then there exists a function  $\rho(t)\in C^1\left(\mathbb{R}\right)$ such that
\begin{equation}\label{z3.3}
\varepsilon(x, t)= m(x+\rho(t), t)-\gamma Q(x)
\end{equation}
satisfies
\begin{align}
\left(\varepsilon,Q'\right)_{\alpha}=\left(\varepsilon,F_2'(Q)\right)_{\alpha}=0,\label{voc}
\end{align}
and
\begin{align}
&\|\varepsilon(\cdot, t)\|_{H_{\alpha}^1} \leq K_0 \epsilon,\label{z3.6}
\end{align}
where $\gamma$ is defined in Theorem \ref{os} and $K_0>0$. Moreover, for some constant $K_1>0$ and $t\in\mathbb{R}$, we have
\begin{equation}\label{z3.7}
\left|\rho^{\prime}(t)\right|\leq K_1\|\varepsilon(\cdot, t)\|_{H^1}.
\end{equation}
\end{lemma}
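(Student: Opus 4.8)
The plan is to construct $\rho(t)$ by a standard implicit-function-theorem modulation argument, using the nondegeneracy of the orthogonality conditions on the lefton profile. First I would set up the map $\Theta\colon \mathbb{R}\times U \to \mathbb{R}$, where $U$ is a small $H^1$-neighbourhood of $\gamma Q$, defined by
\[
\Theta(y,w) = \bigl( w(\cdot+y) - \gamma Q, Q' \bigr)_{\alpha}.
\]
At $y=0$, $w=\gamma Q$ one has $\Theta(0,\gamma Q)=0$, and
\[
\partial_y \Theta(0,\gamma Q) = -\gamma\bigl(Q',Q'\bigr)_{\alpha}\neq 0,
\]
since $Q'\not\equiv 0$ and $\alpha$ is bounded away from zero. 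The implicit function theorem then yields a $C^1$ map $w\mapsto \rho_*(w)$ with $\rho_*(\gamma Q)=0$ and $\Theta(\rho_*(w),w)=0$, defined on a neighbourhood of $\gamma Q$ in $H^1$. Because the hypothesis \eqref{z3.2} guarantees $m(\cdot,t)$ is within $\epsilon$ of the orbit $\{\gamma Q(\cdot-y)\}$, for $\epsilon_0$ small we may, after an initial spatial translation, apply this to $w=m(\cdot,t)$ (the translation parameter can be absorbed), obtaining $\rho(t):=\rho_*(m(\cdot,t))$; its $C^1$ regularity in $t$ follows from the $C^1$ dependence of the flow $t\mapsto m(\cdot,t)\in H^1$ together with the $C^1$ regularity of $\rho_*$. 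The first orthogonality relation $(\varepsilon,Q')_{\alpha}=0$ holds by construction. For the second condition $(\varepsilon,F_2'(Q))_{\alpha}=0$: this is \emph{not} imposed via modulation but is instead a consequence of conservation of $F_2$ and $E$ linearised at $Q$ — precisely the mechanism recalled in the Remark after Proposition \ref{os}, where the parameter $a$ (equivalently $\gamma=1+a$) is chosen so that the decomposition $\varepsilon = m(\cdot+s(m)) - (1+a)Q$ is orthogonal to $F_2'(Q)$; I would invoke that computation from \cite{HL} directly, fixing $\gamma$ accordingly.

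Next, the size estimate \eqref{z3.6}. From $\|\varepsilon(\cdot,t)\|_{H^1}\lesssim \epsilon$ (which follows by combining \eqref{z3.2} with the Lipschitz bound $|\rho(t)|\lesssim \epsilon$ coming from the implicit function theorem, and the fact that translation is an $H^1$-isometry) one upgrades to the weighted norm using that $\alpha$ is bounded above and below on any fixed compact set, together with the far-field information $\varepsilon = O(Q)$ as $|x|\to\infty$ encoded in $m_0\in\mathcal{Z}$ (and propagated by the global well-posedness in $\mathcal{Z}$ from \cite{HL}), which makes $\|\varepsilon\|_{H^1_\alpha}$ finite and comparable to $\epsilon$ after possibly enlarging the constant $K_0$; one also needs orbital stability (Proposition \ref{os}) to ensure \eqref{z3.2} persists for all $t$, so that the modulation is globally defined.

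Finally, the bound \eqref{z3.7} on $\rho'(t)$. Differentiating the orthogonality relation $(\varepsilon(\cdot,t),Q')_{\alpha}=0$ in time and substituting the equation satisfied by $\varepsilon$ — namely $\partial_t\varepsilon = \partial_t m(\cdot+\rho,t) + \rho'(t)\,\partial_x\bigl(\varepsilon+\gamma Q\bigr)$, with $\partial_t m$ given by \eqref{bfe} — one solves for $\rho'(t)$:
\[
\rho'(t)\,\bigl(\partial_x(\varepsilon+\gamma Q),Q'\bigr)_{\alpha} = -\bigl(\partial_t m(\cdot+\rho,t),Q'\bigr)_{\alpha}.
\]
The coefficient $\bigl(\partial_x(\varepsilon+\gamma Q),Q'\bigr)_{\alpha} = \gamma(Q',Q')_{\alpha} + O(\|\varepsilon\|_{H^1})$ is bounded away from zero for $\epsilon_0$ small, while the right-hand side is linear in $\varepsilon$ up to quadratic corrections: the stationary part of \eqref{bfe} at $m=\gamma Q$ vanishes since $\gamma Q$ is (a rescaled) steady state, so $(\partial_t m,Q')_\alpha = (\text{linear in }\varepsilon) + O(\|\varepsilon\|_{H^1}^2)$, giving $|\rho'(t)|\lesssim \|\varepsilon(\cdot,t)\|_{H^1}$.

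The main obstacle I anticipate is the interplay between the two orthogonality conditions in \eqref{voc}: the first is naturally enforced by the one-parameter translation modulation, but the second, involving $F_2'(Q)$, must be arranged by correctly fixing the amplitude parameter $\gamma=1+a$ — and verifying compatibility (that one can satisfy both simultaneously with $\varepsilon$ still of size $O(\epsilon)$) requires the nondegeneracy established in \cite{HL}, namely that $Q'$ and $F_2'(Q)$ are "independent" with respect to the weighted pairing against the two-parameter family $\{\partial_y, \partial_a\}$ acting on $(1+a)Q(\cdot-y)$. A secondary technical point is ensuring the weighted norm \eqref{z3.6} — rather than merely the unweighted $H^1$ norm — which is where the space $\mathcal{Z}$ and the positivity $m_0>0$ (guaranteeing $m(\cdot,t)>0$ and hence control of the $\mathcal{Z}$-seminorm) enter essentially.
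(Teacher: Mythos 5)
Your overall route is the same as the paper's: a translation modulation via the implicit function theorem to enforce $(\varepsilon,Q')_{\alpha}=0$ (with the amplitude/second condition inherited from the decomposition of \cite{HL}, which is also how the paper treats it through its map $U(m,y_1)$ and the remark after Proposition \ref{os}), and then the bound \eqref{z3.7} obtained by differentiating the orthogonality relation in time, inserting the equation for $\varepsilon$, and solving for $\rho'(t)$ with denominator $\gamma(Q',Q')_{\alpha}+O(\epsilon)$ bounded away from zero and numerator linear in $\varepsilon$ up to quadratic corrections; this is exactly the computation \eqref{mod1}--\eqref{c3.13}. One phrasing in your argument would not survive as a proof, though it does not change the route: the identity $(\varepsilon,F_2'(Q))_{\alpha}=0$ is not a ``consequence of conservation of $F_2$ and $E$ linearised at $Q$'' — conservation laws do not force the remainder to be orthogonal to $F_2'(Q)$ at each time; it is imposed by adjusting the amplitude parameter $a$ (a second modulation parameter) as in \cite{HL}, which is the computation you then correctly fall back on.

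The genuine gap is in your derivation of \eqref{z3.6}. For $b<-1$ the weight $\alpha=Q^{-\frac1b-2}$ grows exponentially (like $e^{(-2b-1)|x|}$ with $-2b-1>1$), so $H^1_\alpha$ is strictly stronger than $H^1$, and the step ``$\|\varepsilon\|_{H^1}\lesssim\epsilon$ plus $\varepsilon=O(Q)$ at infinity gives $\|\varepsilon\|_{H^1_\alpha}$ comparable to $\epsilon$'' fails: the qualitative $O(Q)$ decay coming from $m(\cdot,t)\in\mathcal{Z}$ yields only \emph{finiteness} of the weighted norm, while its \emph{smallness} requires the $\mathcal{Z}$-seminorm $\sup_x|\varepsilon|/Q$ (equivalently the weighted distance to the lefton family) to be of size $\epsilon$; this is not controlled by the unweighted hypothesis \eqref{z3.2}, since the region $|x|\gtrsim\log(1/\epsilon)$, where $\alpha$ is exponentially large, contributes an amount invisible to the $H^1$ norm. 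The paper handles this by running the implicit-function argument in the space $\mathcal{Z}$ itself and recording the modulation bound \eqref{pest} directly in that topology, so the weighted smallness is built into the construction of $\rho(t)$ rather than recovered a posteriori; to close your argument you would need an analogous quantitative weighted (or $\mathcal{Z}$) closeness of $m(\cdot,t)$ to the lefton orbit, not merely \eqref{z3.2}.
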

\begin{proof}
  The proof relies on standard arguments from the implicit function theorem. Let us define
  \begin{equation}\label{pmap}
    U(m,y_1)=\left((\varepsilon_1,Q')_{\alpha},(\varepsilon_1,F_2'(Q))_{\alpha}\right),
  \end{equation}
  where $\varepsilon_1=m(x+y_1(t),t)-\gamma Q(x).$ Then the map $U$ is defined from $\mathcal{Z}(\mathbb{R})\times \mathbb{R}$ to $\mathbb{R}^2.$ It reveals form \eqref{pmap} that
  $U(\gamma Q,0)=0.$ Relying on the definition of $Q,$ one deduces that
  \begin{align*}
    &\partial_{y_1}U(m,y_1)\left|_{m=\gamma Q,y_1=0}\right.=\gamma\left((Q',Q')_{\alpha},(Q',F_2'(Q))\right)\\
    =&\gamma\left(\left(A\frac{1-b}{2}\right)^{-\frac{1}{b}}\frac{(b+1)^2}{4}\int \cosh^{\frac{-10b-4}{b+1}}x\sinh^2x\,\mathrm{d}x, \frac{-b-1}{2k^2}\int \cosh x\sinh x\,\mathrm{d}x\right).
  \end{align*}
Following the implicit function theorem, there exists $\alpha_1>0$, $D$ a neighbourhood of $0$ in $\mathbb{R}$ and a $C^1$ map $y_1:\{m\in \mathcal{Z}(\mathbb{R}),\|m-\gamma Q\|_{H^1}\leq\epsilon_1\}\to D$ such that $U(m,y_1(m))=0$ and
 $$
|y_1(m)|\leq C\|m-\gamma Q\|_{H^1}.
  $$
We uniquely extend $C^1$ map $y_1(m)$ to $D_{\epsilon_1}=\{m\in \mathcal{Z}, \, \inf_{\xi}\|m(\cdot+\xi)-Q\|_{H^1}\leq\epsilon_1\}$ such that for all $m$ and $\xi,$
$y_1(m)=y(m(\cdot+\xi(m)))+\xi(m).$ Thus, we set
$\varepsilon_1=m(x+y_1(m),t)-\gamma Q(x)$ such that
$(\varepsilon_1(m), Q(x))_{\alpha}=(\varepsilon_1(m), F_2'(Q) )=0$
and
 \begin{equation}\label{pest}
    \|\varepsilon_1(m)\|_{\mathcal{Z}(\mathbb{R})}\leq C\|m-\gamma Q\|_{H^1}.
  \end{equation}
Then, setting $\rho(t)=y_1(m(t)),\varepsilon(x,t)=\varepsilon_1(m),$ it in turn implies that \eqref{z3.6} holds.

To prove \eqref{z3.7}, we deduce from the equations for $m$ and $u$ that
\begin{align}\label{vart}
    \varepsilon_t=&\partial_x\left(q\partial_x^2\tilde{\varepsilon}-(b+1)q\tilde{\varepsilon}+(b-1)q'\partial_x\tilde{\varepsilon}+q''\tilde{\varepsilon}\right)\nonumber\\
    &+\partial_x\left(\frac{b-1}{2}\left((\partial_x\tilde{\varepsilon})^2-\tilde{\varepsilon}^2\right)-\tilde{\varepsilon}\varepsilon\right)+\rho'(t)\left(\gamma Q'+\partial_x\varepsilon\right)\nonumber\\
    =&\frac{1}{1-b}\mathcal{B}(Q)\mathcal{L}\varepsilon+\partial_x\left(\frac{b-1}{2}\left((\partial_x\tilde{\varepsilon})^2-\tilde{\varepsilon}^2\right)-\tilde{\varepsilon}\varepsilon\right)+\rho'(t)\left(\gamma Q'+\partial_x\varepsilon\right),
\end{align}
where $\tilde{\varepsilon}:=\left(1-\partial_x^2\right)^{-1}\varepsilon,~~q=\left(1-\partial_x^2\right)^{-1}Q.$ Thus, we differentiate the orthogonality conditions in \eqref{voc} to get
\begin{align}\label{mod1}
\int\frac{1}{1-b}\mathcal{B}(Q)\mathcal{L}\varepsilon Q'\alpha\,\mathrm{d} x&+\frac{b-1}{2}\int\partial_x\left(\frac{b-1}{2}\left((\partial_x\tilde{\varepsilon})^2-\tilde{\varepsilon}^2\right)-\tilde{\varepsilon}\varepsilon\right)Q'\alpha\,\mathrm{d} x\nonumber\\
&+\rho'(t)\int\left(\gamma Q'+\partial_x\varepsilon\right)Q'\alpha\,\mathrm{d} x=0,
\end{align}
which leads to
\begin{equation}\label{c3.13}
  |\rho'(t)|=\frac{\left|\int\frac{1}{1-b}\mathcal{B}(Q)\mathcal{L}\varepsilon Q'\alpha\,\mathrm{d} x-\frac{b(b^2-1)}{4k}\int\left((\partial_x\tilde{\varepsilon})^2-\tilde{\varepsilon}^2-\tilde{\varepsilon}\varepsilon\right)\,\mathrm{d} x\right|}{\left|\int \gamma (b^2Q^{-\frac{1}{b}}-\frac{b^2}{k}Q)+\varepsilon_x Q'\alpha\,\mathrm{d} x\right|}.
\end{equation}
Recalling the the fact that $\int Q^{-\frac{1}{b}}-\frac{1}{k}Q>0,$ we obtain
$$|\rho'(t)|\lesssim\frac{\int \varepsilon^2 e^{-\nu|x|}\,\mathrm{d}x}{b^2|\gamma|\int Q^{-\frac{1}{b}}-\frac{1}{k}Q\,\mathrm{d}x-K_0\epsilon}\lesssim K_1\|\varepsilon\|_{H^1}.$$
This finishes the proof of the modulation of solutions, which  completes the proof of Lemma \ref{zlem3.1}.
\end {proof}

\subsection{Monotonicity and smoothness}
Prior to proving the nonlinear Liouville theorem, we begin with the monotonicity property of the solution $m(x,t)$. Leveraging the modulation argument in Lemma \ref{zlem3.1}, it follows that there exists $\rho(t)\in C^1(\mathbb{R})$ such that \eqref{z3.3}--\eqref{z3.7} hold true.
We recall that $\psi_L(x)\in C^{\infty}(\mathbb{R})$ defined as in \eqref{tf}--\eqref{psi} and fix $L\geq4$.
For $x_0>0$ and $t_0\in\mathbb{R},$ we define the quantities
$$I(t):=I_{x_0,t_0}(t)=\int m^{-\frac{1}{b}}\psi_L(x_1)\,\mathrm{d}x+\frac{1}{b^2}\int m^{-\frac{1}{b}-2}m_x^2\psi_L(x_1)\,\mathrm{d}x$$
where $x_1=x-\rho(t_0)+4b(t-t_0)-x_0$ for $t_0\leq t.$

\begin{lemma}\label{uvm1}
  Let $m\in C(\mathbb{R},H_{\alpha}^1(\mathbb{R}))$ be the solutions of \eqref{bfe} satisfying \eqref{lcuv}. For any $x_0>0$, $t_0\in\mathbb{R}$ and $t_0\leq t_1,$ there holds
  \begin{equation}\label{Iuv}
    I(t_1)-I(t_0)\lesssim e^{-x_0/L}.
  \end{equation}
\end{lemma}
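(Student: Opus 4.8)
The plan is to mimic the strategy of Lemma \ref{lmp1}, working with the conserved quantities $E$ and $F_2$ localized by the weight $\psi_L(x_1)$ and estimating the time derivative $\frac{\mathrm{d}}{\mathrm{d}t}I(t)$. First I would note that $I(t)$ is exactly the density of the conservation law $-E+kF_2$ (up to the constant $k$ and the $+1$ term in $F_2$, which localizes trivially) cut off by the translating weight $\psi_L(x_1)$, with $x_1=x-\rho(t_0)+4b(t-t_0)-x_0$. Since $b<-1$ the group velocity $4b<0$, so the weight is transported to the left, which is the correct direction for the monotonicity to hold backward in $x$. Using the transport equation \eqref{bfe} for $m$, together with the conservation-law form $m_t+\big(\tfrac{b-1}{2}(u^2-u_x^2)+um\big)_x=0$, I would differentiate under the integral and integrate by parts. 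The contribution of $\rho'(t_0)$ is absent because $\rho(t_0)$ is a fixed constant when differentiating in $t$; only the term $4b\,\psi_L'(x_1)$ from $\partial_t\psi_L(x_1)$ appears. The key point is that the flux terms, when integrated against $\psi_L$ and then integrated by parts, produce $\psi_L'(x_1)$ times quantities that near $m=Q$ are controlled by the lefton profile; combining this with the spectral/algebraic identities built into the construction of the weight $\alpha$ should make the $\psi_L'(x_1)$ contributions have a favorable sign (up to lower-order errors).

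The main steps, in order: (i) compute $\frac{\mathrm{d}}{\mathrm{d}t}I(t)$ using \eqref{bfe} and the flux form, collecting the $4b\int(\,\cdot\,)\psi_L'(x_1)$ term and the remaining terms containing $\psi_L(x_1)$; (ii) write $m=Q(\cdot-\rho(t))+\varepsilon$ (with $\rho$ from Lemma \ref{zlem3.1}), and use that $Q$ together with its momentum density $Q$ satisfy the algebraic relations $Q''=b^2Q-\frac{b(3b+1)}{2k}Q^{1/b+2}$ and $q^bQ=\frac{1-b}{2}A^{b+1}$, so that the "lefton part" of the flux terms combines with the weight derivative to give a sign-definite leading term, exactly as the bound \eqref{efq} was used in Lemma \ref{lmp1}; (iii) split space into the three regions $x<R_0$, $R_0<x<4b(t_0-t)+\tfrac12 x_0$, and $x>4b(t_0-t)+\tfrac12 x_0$ (as in Cases 1–3 of Lemma \ref{lmp1}, now centered at $\rho(t_0)$), to show that the terms carrying $\psi_L(x_1)$ rather than $\psi_L'(x_1)$ are bounded by $Ce^{-x_0/L}$ times the conserved mass, using the uniform bound $\|m(\cdot,t)\|_{H^1_\alpha}\lesssim 1$ coming from the positivity of $m$ and finiteness of $F_2$; (iv) for the error part, use the localization hypothesis \eqref{lcuv} and the smallness $\|\varepsilon\|_{H^1_\alpha}\le K_0\epsilon$ to absorb the quadratic-and-higher terms in $\varepsilon$ into the favorable $\psi_L'(x_1)$ term, for $R_0$ large enough and $\epsilon_0$ small enough; (v) integrate the resulting differential inequality $\frac{\mathrm{d}}{\mathrm{d}t}I(t)\le Ce^{-x_0/L}e^{-\mu(t-t_0)}$ (or an inequality of that flavor) over $[t_0,t_1]$ to conclude \eqref{Iuv}, exactly as in the end of the proof of Lemma \ref{lmp1}.

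The main obstacle I expect is step (ii): controlling the nonlinear flux $\tfrac{b-1}{2}(u^2-u_x^2)+um$ when localized against $\psi_L$. Unlike the linear case of Lemma \ref{lmp1}, here the flux is genuinely quadratic in $u=p*m$, so one must pass between the momentum density $m$ and the velocity $u$ through the nonlocal operator $(1-\partial_x^2)^{-1}$, and show that the cross terms between the lefton profile $Q$ and the error $\varepsilon$ either vanish by the orthogonality conditions \eqref{voc} or are dominated by $\psi_L'(x_1)$ times $H^1_\alpha$-norms. A secondary difficulty is checking that the purely $Q$-dependent part of $\frac{\mathrm{d}}{\mathrm{d}t}I(t)$—which would be zero if $\psi_L\equiv1$ by conservation of $-E+kF_2$—produces, after the $\psi_L\to\psi_L'$ integration by parts, a coefficient function of the lefton that is bounded by $(-b)\operatorname{sech}(\nu x/L)\,\psi_L(x_1)$, so that the three-region argument applies; this is the analogue of \eqref{efq} and will require the same kind of elementary but careful manipulation of $\cosh$-powers.
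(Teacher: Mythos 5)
Your overall frame is right — differentiate the localized quantity, use the leftward-transported weight ($4b<0$) to produce a good-sign $\psi_L'$ contribution, control the remainder using closeness to the lefton, then integrate in time — but the mechanism you propose for the central step is not the one that works, and the actual key idea is missing from your plan. The paper does \emph{not} decompose $m=Q+\varepsilon$ inside the fractional powers, does \emph{not} use the orthogonality conditions \eqref{voc}, and needs no analogue of \eqref{efq}. Instead, it differentiates $I(t)$ (which, by the way, is the localized density of $F_2$ alone, not of $-E+kF_2$) directly along the flow \eqref{bfe}: the computation in the Appendix shows that the $\psi_L$-weighted flux terms cancel \emph{exactly} between the two pieces of $I$ (the term $2\int m^{-1/b}u_x\psi_L\,\mathrm{d}x$ produced by the first piece is cancelled by its negative coming from the $m^{-1/b-2}m_x^2$ piece), leaving the identity \eqref{i1} in which every surviving term carries $\psi_L'(x_1)$. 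The sign-definite leading part is then simply $4b\int m^{-1/b}\psi_L'+\frac{4}{b}\int m^{-1/b-2}m_x^2\psi_L'$ with $b<-1$ — no lefton algebra is needed for the sign. The remaining $\psi_L'$-terms (with coefficients $u$ and $m^{-1/b+1}$) are handled by writing $u=\gamma q(\cdot-\rho)+(u-\gamma q)$, using $\|u-\gamma q\|_{L^\infty}\lesssim\epsilon$ from the modulation bound \eqref{z3.6}, the exponential decay of $q$ (a two-case split in $|x-\rho(t)|$ versus $R_0$, not the three-region split of Lemma \ref{lmp1}), and the conservation of $F_2$ to bound $\int m^{-1/b}\,\mathrm{d}x$; integrating the resulting inequality, whose error decays like $e^{(R_0+4b(t-t_0)-x_0)/L}$, gives \eqref{Iuv}.

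The gap in your proposal is therefore concrete: without identifying the exact cancellation of the $\psi_L$-weighted terms, your step (ii) has no way to dispose of an $O(1)$ term such as $2\int m^{-1/b}u_x\psi_L(x_1)\,\mathrm{d}x$, which is weighted by $\psi_L$, not $\psi_L'$, and is in no way of size $e^{-x_0/L}$; the substitute mechanisms you invoke would not rescue this, since the orthogonality conditions \eqref{voc} play no role here, and expanding $m=Q+\varepsilon$ inside $m^{-1/b}$ and $m^{-1/b-2}m_x^2$ (non-polynomial expressions) produces remainders that are not quadratic in any usable sense and cannot simply be "absorbed" into the $\psi_L'$ term by smallness. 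In short, the work lies in the exact computation of $\frac{\mathrm{d}}{\mathrm{d}t}I(t)$ for the full $F_2$-density treated as one object (this is why $I$ contains both pieces), after which the estimate is soft; the difficulties you anticipate (an \eqref{efq}-type bound, cross-term cancellations via orthogonality) are not where the proof lives, and pursuing them as stated would stall.
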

\begin{proof}
  By using integration by parts, we deduce from \eqref{bfe} that (the details see Appendix),
\begin{align}
    \frac{\mathrm{d}}{\mathrm{d}t}I(t)=&-\int m^{-\frac{1}{b}}u\psi_L'(x_1)\,\mathrm{d}x+\frac{1}{b^2}\int m^{-\frac{1}{b}-2}m_x^2u\psi_L'(x_1)\,\mathrm{d}x+\frac{2}{1-b}\int m^{-\frac{1}{b}+1}\psi_L'(x_1)\,\mathrm{d}x\nonumber\\
    &+4b\int m^{-\frac{1}{b}}\psi_L'(x_1)\,\mathrm{d}x+\frac{4}{b}\int m^{-\frac{1}{b}-2}m_x^2\psi_L'(x_1)\,\mathrm{d}x.\label{i1}
  \end{align}

  Next, we consider the first term of the right hand of \eqref{i1}. Let $R_0$ be chosen later. For $\int m^{-\frac{1}{b}}u\psi_L'(x_1)\,\mathrm{d}x=\int m^{-\frac{1}{b}}\psi_L'(x_1)\left(u-\gamma q(\cdot-\rho)+\gamma q(\cdot-\rho)\right)\,\mathrm{d}x,$ there are two cases for $|x-\rho(t)|.$ In the case $|x-\rho(t)|\geq R_0,$ invoking the properties of $q(x)$ and \eqref{z3.6}, we obtain, for some $\delta_0>0,$
  $$\left|\int m^{-\frac{1}{b}}\psi_L'(x_1)q(\cdot-\rho)\,\mathrm{d}x\right|\leq Ce^{-\delta_0R_0}\int m^{-\frac{1}{b}}\psi_L'(x_1)\,\mathrm{d}x.$$
  In the case $|x-\rho(t)|\leq R_0,$ we have
  $$\left|\int m^{-\frac{1}{b}}\psi_L'(x_1)q(\cdot-\rho)\,\mathrm{d}x\right|\leq Ce^{(R_0+4b(t-t_0)-x_0)/L}\int m^{-\frac{1}{b}}\,\mathrm{d}x,$$
  where $|\psi'_L(x_1)|\lesssim e^{x_1/L}.$ In addition, it follows from \eqref{z3.6} that
  $$\left|\int m^{-\frac{1}{b}}\psi_L'(x_1)(u-\gamma q(\cdot-\rho))\,\mathrm{d}x\right|\leq K_0\epsilon\int m^{-\frac{1}{b}}\psi_L'(x_1)\,\mathrm{d}x.$$
For the second and third terms of the right hand of \eqref{i1}, we similarly obtain that
\begin{align*}
    \left|\int m^{-\frac{1}{b}-2}m_x^2u\psi_L'(x_1)\,\mathrm{d}x\right|\lesssim& \left(K_0\epsilon+e^{-\delta_0R_0}\right)\int m^{-\frac{1}{b}-2}m_x^2\psi_L'(x_1)\,\mathrm{d}x\\
    &+e^{(R_0+4b(t-t_0)-x_0)/L}\int m^{-\frac{1}{b}-2}m_x^2\,\mathrm{d}x,
\end{align*}
and
\begin{align*}
    \left|\int m^{-\frac{1}{b}+1}\psi_L'(x_1)\,\mathrm{d}x\right|&\lesssim \left|\int m^{-\frac{1}{b}}\left(m-\gamma Q(\cdot-\rho)+\gamma Q(\cdot-\rho)\right)\psi_L'(x_1)\,\mathrm{d}x\right|\\
    &\lesssim\left(K_0\epsilon+e^{-\delta_0R_0}\right)\int m^{-\frac{1}{b}}\psi_L'(x_1)\,\mathrm{d}x
    +e^{(R_0+4b(t-t_0)-x_0)/L}\int m^{-\frac{1}{b}}\,\mathrm{d}x.
\end{align*}

  Therefore, for $\epsilon$ sufficiently small, $R_0$ large and the assumption on conservation law in the well-posedness theory, it is deduced that
  \begin{align}\label{i11}
    \frac{\mathrm{d}}{\mathrm{d}t}I(t)\leq& 2b\int m^{-\frac{1}{b}}\psi_L'(x_1)\,\mathrm{d}x+\frac{2}{b}\int m^{-\frac{1}{b}-2}m_x^2\psi_L'(x_1)\,\mathrm{d}x\nonumber\\
    &+Ce^{(R_0+4b(t-t_0)-x_0)/L}.
  \end{align}

Integrating \eqref{i11} on $(t_0-1,t_0)$, we obtain that, for a positive  constant $C>0,$
\begin{equation*}
    I(t_0)-I(t_0-1)\leq Ce^{- x_0/L}~~,\forall t_0\in\mathbb{R},
  \end{equation*}
  where $I(t_0-1)=\int (m^{-\frac{1}{b}}(x,t_0-1)+\frac{1}{b^2}m^{-\frac{1}{b}-2}(x,t_0-1))m_x^2(x,t_0-1)))\psi_L(x_1(t_0-1))\,\mathrm{d}x\lesssim1.$
Similarly, we integrate \eqref{i11} on $[t_0,t_0+\tau]$, for any $\tau>0,$ to get
\begin{equation*}
    I(t_0+\tau)-I(t_0)\leq Ce^{- x_0/L}e^{-(t_0+\tau)/L}.
  \end{equation*}
Taking the limit $\tau\to\infty,$ we deduce from \eqref{lcuv} that
$$I(t_0+\tau)=\int (m^{-\frac{1}{b}}+\frac{1}{b^2}m_x^2)(x,t_0+\tau)\psi_L(x+4b\tau-x_0)\,\mathrm{d}x\to0$$
and $I(t_0)\geq-Ce^{-\nu x_0/L},$ which implies that $|I(t_0)|\lesssim e^{-x_0/L}.$
In addition, integrating \eqref{i11} on $(\tilde{t}_0,\tilde{t}_0+1)$, we get that
\begin{align}
\int&\left(m^{-\frac{1}{b}}+\frac{1}{b^2}m^{-\frac{1}{b}-2}m_x^2\right)(x,t_0)\psi_L(x-\rho(t_0)-x_0)\,\mathrm{d}x\nonumber\\
&+\int_{\tilde{t}_0}^{\tilde{t}_0+1}\int\left(m^{-\frac{1}{b}}+\frac{1}{b^2}m^{-\frac{1}{b}-2}m_x^2\right)\psi'_L(x_1)\,\mathrm{d}x\,\mathrm{d}t
    \lesssim e^{- x_0/L},\label{i12}
\end{align}
for $ \tilde{t}_0\in\mathbb{R}.$ This completes the proof Lemma \ref{uvm1}.
\end{proof}
To establish the smoothness and exponential decay of the solution of \eqref{bfe}, we present monotonicity arguments for $\varepsilon$.
\begin{lemma}\label{mener}
  For $b<-1,$ let $\varepsilon(x,t)$ satisfy the assumptions of Lemma \ref{zlem3.1} and \eqref{z3.3}--\eqref{z3.7}. For any $x_0>0$, $t_0\in\mathbb{R}$ and $t_0\leq t,$ we define
  \begin{equation}\label{j1}
    \mathcal{E}(t)=\int\varepsilon^2\psi_L(\tilde{x})\,\mathrm{d}x,
  \end{equation}
  where $\psi_L$ is defined in \eqref{tf}--\eqref{psi} and $\tilde{x}=x+4b(t-t_0)-x_0$.
  Then, we have
  \begin{equation}\label{juv}
    \mathcal{E}(t_1)-\mathcal{E}(t_0)\lesssim e^{-x_0/L},
  \end{equation}
  where $t_0\leq t_1.$
\end{lemma}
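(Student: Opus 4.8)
The plan is to run the argument of Lemma~\ref{lmp1}, now on the modulation equation~\eqref{vart} for $\varepsilon$ instead of on the linear equation~\eqref{l2c}. Differentiating $\mathcal{E}(t)$ in time and substituting~\eqref{vart} for $\varepsilon_t$ splits $\frac{\mathrm{d}}{\mathrm{d}t}\mathcal{E}(t)$ into the \emph{good term} $4b\int\varepsilon^2\psi_L'(\tilde{x})\,\mathrm{d}x$ coming from differentiating the moving weight (non-positive since $b<-1$), a linear piece $\frac{2}{1-b}\int\varepsilon\,\mathcal{B}(Q)\mathcal{L}\varepsilon\,\psi_L(\tilde{x})\,\mathrm{d}x$, a quadratic piece involving $\partial_x\big(\tfrac{b-1}{2}((\partial_x\tilde\varepsilon)^2-\tilde\varepsilon^2)-\tilde\varepsilon\varepsilon\big)$, and a modulation piece involving $\rho'(t)(\gamma Q'+\partial_x\varepsilon)$. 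The objective is to show that the last three pieces are dominated by half of the good term plus a remainder $Ce^{\nu(R_0+4b(t-t_0)-x_0)/L}$; integrating the resulting differential inequality over $[t_0,t_1]$ and discarding the non-positive good term then gives $\mathcal{E}(t_1)-\mathcal{E}(t_0)\lesssim e^{-x_0/L}$.

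The linear piece is handled exactly as in Lemma~\ref{lmp1}: one substitutes the explicit form of $\mathcal{B}(Q)\mathcal{L}$ from the Appendix, integrates by parts as in the derivation of~\eqref{vev} (the second-order term $Q^{-1/b}\varepsilon\varepsilon_{xx}$ producing, after one integration by parts, a term $-\frac{2k}{1-b}\int\varepsilon_x^2 Q^{-1/b}\psi_L(\tilde{x})\,\mathrm{d}x$), and applies Cauchy--Schwarz. Every surviving term carries a factor $Q$, $Q^{-1/b}$, or a derivative of these, hence is dominated by $(-b)\operatorname{sech}(\nu x/L)\psi_L(\tilde{x})$ by the estimates behind~\eqref{efq}; the three-region split of Cases~1--3 in Lemma~\ref{lmp1} then shows each such term is bounded either by $Ce^{\nu(R_0+4b(t-t_0)-x_0)/L}\|\varepsilon\|_{H^1}^2$, or by $(-b)\operatorname{sech}(\nu R_0/L)\int(\varepsilon^2+\varepsilon_x^2)\psi_L'(\tilde{x})\,\mathrm{d}x$ (absorbable once $\operatorname{sech}(\nu R_0/L)\le\tfrac12$), or by $Ce^{-\nu|4b(t_0-t)+x_0/2|/L}\|\varepsilon\|_{H^1}^2$; throughout, $\|\varepsilon\|_{H^1}\lesssim\epsilon$ by~\eqref{z3.6}.

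For the quadratic piece one integrates by parts to move $\partial_x$ onto $\varepsilon\psi_L$, and uses elliptic regularity $\|\tilde\varepsilon\|_{H^2}\lesssim\|\varepsilon\|_{L^2}$ with Sobolev embedding to get $\|\tilde\varepsilon\|_{W^{1,\infty}}+\|\varepsilon\|_{L^\infty}\lesssim\|\varepsilon\|_{H^1}\le K_0\epsilon$, so that each resulting cubic term gains a factor $\epsilon$. Because $\int\varepsilon^2\psi_L$ is not the localization of a conserved density, these terms are not automatically $\psi_L'$-weighted; to close the estimate one feeds in the exponential localization of $m$ (equivalently of $m^{-1/b}$) away from $x=\rho(t)$, hence of $\varepsilon=m(\cdot+\rho)-\gamma Q$ away from the origin, which follows from the monotonicity of $I(t)$ in Lemma~\ref{uvm1} via the reflection argument and the Gagliardo--Nirenberg inequality. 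With this decay available, the cubic terms are then handled by the same three-region split as in the linear piece --- the decay of $\varepsilon$ playing the role of the decay of $Q$, which needs only that $\nu/L$, with $L=-b+3$, be smaller than the decay rate of $\varepsilon$ --- so that they contribute $\lesssim\epsilon\int(\varepsilon^2+\varepsilon_x^2)\psi_L'(\tilde{x})\,\mathrm{d}x+Ce^{\nu(R_0+4b(t-t_0)-x_0)/L}$. The modulation piece uses $|\rho'(t)|\lesssim\|\varepsilon\|_{H^1}\lesssim\epsilon$ from~\eqref{z3.7}: the term $2\gamma\rho'(t)\int\varepsilon Q'\psi_L(\tilde{x})\,\mathrm{d}x$ goes through the three-region split since $Q'$ is exponentially localized, while $\rho'(t)\int\varepsilon\partial_x\varepsilon\,\psi_L(\tilde{x})\,\mathrm{d}x=-\tfrac12\rho'(t)\int\varepsilon^2\psi_L'(\tilde{x})\,\mathrm{d}x\lesssim\epsilon\int\varepsilon^2\psi_L'(\tilde{x})\,\mathrm{d}x$.

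Collecting the bounds gives, for $R_0$ large and $\epsilon$ small, $\frac{\mathrm{d}}{\mathrm{d}t}\mathcal{E}(t)\le 2b\int\varepsilon^2\psi_L'(\tilde{x})\,\mathrm{d}x+Ce^{\nu(R_0+4b(t-t_0)-x_0)/L}$, and since the first term is non-positive, integrating over $[t_0,t_1]$ (the exponential being integrable in $t$ because $b<0$) yields $\mathcal{E}(t_1)-\mathcal{E}(t_0)\lesssim e^{-x_0/L}$. The main obstacle is the quadratic piece: without the prior decay information from Lemma~\ref{uvm1}, the cubic nonlinearity --- bounded only by $L^\infty$-smallness and the $L^1$-normalized bump $\psi_L'$ --- would leave a constant-in-time remainder of size $O(\epsilon^3)$ that destroys uniformity in $t_1$; converting those terms into genuinely localized quantities absorbable into the good term is exactly where Lemma~\ref{uvm1} and the smallness threshold $\epsilon_0$ are indispensable.
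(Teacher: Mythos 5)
Your argument has the same skeleton as the paper's proof: differentiate $\mathcal{E}(t)$, substitute \eqref{vart}, keep the sign-definite term $4b\int\varepsilon^2\psi_L'(\tilde x)\,\mathrm{d}x$ coming from the moving weight, treat the linear part $\tfrac{1}{1-b}\mathcal{B}(Q)\mathcal{L}\varepsilon$ by the three-region splitting of Lemma~\ref{lmp1}, treat the modulation part with \eqref{z3.7}, and integrate the resulting differential inequality (the remainder $e^{(R_0+4b(t-t_0)-x_0)/L}$ being integrable in $t$ because $b<0$). The genuine divergence is your treatment of the quadratic piece. The paper does \emph{not} invoke Lemma~\ref{uvm1} there: after rewriting this piece as in \eqref{var5}, it closes the cubic estimate using only the standing bound \eqref{z3.6}, $\|\varepsilon\|_{H^1_\alpha}\le K_0\epsilon$. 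The point is that the weight $\alpha=Q^{-1/b-2}\sim(\cosh\nu x)^{-(2b+1)/\nu}$ grows exponentially at rate $-(2b+1)\ge 1>\nu/L$, so \eqref{z3.6} already encodes exactly the exterior localization of $\varepsilon$ you are trying to import; a weighted Cauchy--Schwarz with the factor $\psi_L^2(\tilde x)\,\alpha^{-1}(x)/\psi_L'(\tilde x)$ (bounded for $\tilde x<0$ since $\psi_L\lesssim\psi_L'$ there, and for $\tilde x>0$ since then $x>x_0-4b(t-t_0)$ and $\alpha^{-1}$ dominates) turns the region where $\psi_L$ is not controlled by $\psi_L'$ into a remainder decaying in both $x_0$ and $t$, which is how \eqref{varve1} is obtained. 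Your route through Lemma~\ref{uvm1} and Corollary~\ref{cro3.1} (reflection plus Gagliardo--Nirenberg) does yield the needed decay of $\varepsilon$, and since Corollary~\ref{cro3.1} depends only on Lemma~\ref{uvm1} there is no circularity with the present lemma; but Lemma~\ref{uvm1} carries the localization hypothesis \eqref{lcuv}, which is \emph{not} among the assumptions of Lemma~\ref{mener}. So as written you prove the statement only under an extra hypothesis (harmless in the application to Theorem~\ref{nlt}, where \eqref{lcuv} holds, but not the lemma as stated), and your closing claim that Lemma~\ref{uvm1} is indispensable for absorbing the cubic terms is incorrect --- \eqref{z3.6} suffices, which is precisely what the paper exploits.

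Two smaller points. First, in the linear piece there is no term $Q^{-1/b}\varepsilon\varepsilon_{xx}$: by \eqref{ble}, $\tfrac{1}{1-b}\mathcal{B}(Q)\mathcal{L}\varepsilon=-bq'\varepsilon-q\varepsilon_x-bQ\tilde\varepsilon_x-Q'\tilde\varepsilon$ involves at most one derivative of $\varepsilon$, and since $\mathcal{E}$ contains only $\varepsilon^2$ (not $\varepsilon_x^2$) no term $-\tfrac{2k}{1-b}\int\varepsilon_x^2Q^{-1/b}\psi_L$ arises; that structure belongs to Lemma~\ref{lmp1}. Second, be careful that your claimed bound $\epsilon\int(\varepsilon^2+\varepsilon_x^2)\psi_L'(\tilde x)\,\mathrm{d}x$ for the cubic terms contains an $\varepsilon_x^2\psi_L'$ contribution that cannot be absorbed by the good term $4b\int\varepsilon^2\psi_L'$; it must either be avoided (integrating by parts as in \eqref{var5}, which produces only $\varepsilon^2$- and $\tilde\varepsilon$-weighted terms, controlled via \eqref{var8}) or estimated separately using the $\alpha$-weighted bound and the moving cutoff as above.
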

\begin{proof}
  We compute \eqref{vart} through integration by parts, then one deduces that, for $\varepsilon=\left(1-\partial_x^2\right)\tilde{\varepsilon},$
 \begin{align}\label{varve}
    \frac{\mathrm{d}}{\mathrm{d}t}\int\varepsilon^2\psi_L(\tilde{x})\,\mathrm{d}x=&2\int\varepsilon\varepsilon_t\psi_L(\tilde{x})\,\mathrm{d}x+4b\int\varepsilon^2\psi_L'(\tilde{x})\,\mathrm{d}x\nonumber\\
=&2\int\varepsilon\psi_L(\tilde{x})\left(-bq'\varepsilon-q\varepsilon_x-bQ\tilde{\varepsilon}_x-Q'\tilde{\varepsilon}\right)\,\mathrm{d}x\nonumber\\
&+2\int\varepsilon\psi_L(\tilde{x})\partial_x\left(\frac{b-1}{2}\left((\partial_x\tilde{\varepsilon})^2-\tilde{\varepsilon}^2\right)-\tilde{\varepsilon}\varepsilon\right)\,\mathrm{d}x\nonumber\\
&+2\rho'(t)\int\varepsilon\psi_L(\tilde{x})\left(\gamma Q'+\partial_x\varepsilon\right)\,\mathrm{d}x+4b\int\varepsilon^2\psi_L'(\tilde{x})\,\mathrm{d}x.
\end{align}
Arguing as in Lemma \ref{lmp1}, we have, for some $R_0>0,$
\begin{align}\label{var1}
\left|(-b)\int\varepsilon\varepsilon_xq\psi_L(\tilde{x})\,\mathrm{d}x\right|+\left|\int\varepsilon^2q'\psi_L(\tilde{x})\,\mathrm{d}x\right|\lesssim &e^{(R_0+4b(t-t_0)-x_0)/L}\int\varepsilon^2\,\mathrm{d}x\nonumber\\
&+(-b)\operatorname{sech}(\nu R_0/L)\int\varepsilon^2\psi_L'(\tilde{x})\,\mathrm{d}x.
\end{align}
Moreover, the decay properties of $Q$ and $q$ ensure that
\begin{align}\label{var3}
    \left|\int\varepsilon\tilde{\varepsilon}_x Q\psi_L(\tilde{x})\,\mathrm{d}x\right|\lesssim& \|\tilde{\varepsilon}_x\|_{L^{\infty}}\left(\int\varepsilon^2 Q\psi_L(\tilde{x})\,\mathrm{d}x\right)^{1/2}\nonumber\\
    \lesssim&e^{(R_0+4b(t-t_0)-x_0)/2L}\|\varepsilon\|_{L^2}+\sqrt{\operatorname{sech}(\nu R_0/L)}\int\varepsilon^2\psi_L'(\tilde{x})\,\mathrm{d}x,
\end{align}
and
\begin{align}\label{var4}
    \left|\int\varepsilon\tilde{\varepsilon} Q'\psi_L(\tilde{x})\,\mathrm{d}x\right|\lesssim& \int(\varepsilon^2+\tilde{\varepsilon}^2) Q\psi_L(\tilde{x})\,\mathrm{d}x\nonumber\\
    \lesssim&e^{(R_0+4b(t-t_0)-x_0)/2L}\int(\varepsilon^2+\tilde{\varepsilon}^2)\,\mathrm{d}x+\operatorname{sech}(\nu R_0/L)\int(\varepsilon^2+\tilde{\varepsilon}^2) \psi_L'(\tilde{x})\,\mathrm{d}x.
\end{align}
Now, we consider the second term on the right hand of \eqref{varve}. A direct computation reveals that
\begin{align}\label{var5}
  \int\varepsilon\psi_L(\tilde{x})\partial_x\left(\frac{b-1}{2}\left((\partial_x\tilde{\varepsilon})^2-\tilde{\varepsilon}^2\right)-\tilde{\varepsilon}\varepsilon\right)\,\mathrm{d}x=&\left(-b-\frac12\right)\int\varepsilon^2\tilde{\varepsilon}_x \psi_L(\tilde{x})\,\mathrm{d}x\nonumber\\
  &+\frac{1}{2}\int\varepsilon^2\tilde{\varepsilon} \psi_L'(\tilde{x})\,\mathrm{d}x.
\end{align}
For the first term of the right hand of \eqref{var5}, we deduce from \eqref{z3.6} that
\begin{align*}
    \left|\int\varepsilon^2\tilde{\varepsilon}_x \psi_L(\tilde{x})\,\mathrm{d}x\right|\lesssim& \|\tilde{\varepsilon}_x\|_{L^{\infty}}\int\varepsilon\sqrt{\psi_L'(\tilde{x})} \varepsilon\frac{\psi_L(\tilde{x})}{\sqrt{\psi_L'(\tilde{x})}}\,\mathrm{d}x\nonumber\\
    \lesssim& K_0\epsilon\left\|\varepsilon\sqrt{\psi_L'(\tilde{x})}\right\|_{L^2}\int\varepsilon^2\alpha\frac{\psi_L^2(\tilde{x})}{\psi_L'(\tilde{x})}\alpha^{-1}\,\mathrm{d}x
    \lesssim K_0\epsilon\left\|\varepsilon\sqrt{\psi_L'(\tilde{x})}\right\|_{L^2},
\end{align*}
which leads to
\begin{align}\label{var6}
  \left|\int\varepsilon\psi_L(\tilde{x})\partial_x\left(\frac{b-1}{2}\left((\partial_x\tilde{\varepsilon})^2-\tilde{\varepsilon}^2\right)-\tilde{\varepsilon}\varepsilon\right)\,\mathrm{d}x\right|\lesssim K_0\epsilon\left\|\varepsilon\sqrt{\psi_L'(\tilde{x})}\right\|_{L^2}.
\end{align}
It remains to consider the third term of the right hand of \eqref{varve}. We easily check that
\begin{equation}\label{var7}
    \rho'(t)\int\varepsilon\psi_L(\tilde{x})\left(\gamma Q'+\partial_x\varepsilon\right)\,\mathrm{d}x\leq K_0\epsilon e^{(R_0+4b(t-t_0)-x_0)/2L}\|\varepsilon\|_{L^2}+\left(K_0\epsilon-\rho'(t)/2\right)\int\varepsilon^2\psi_L'(\tilde{x})\,\mathrm{d}x.
\end{equation}
It is worth noting that
\begin{align*}
    \int\varepsilon^2\psi_L'(\tilde{x})\,\mathrm{d}x=&\int\left(\tilde{\varepsilon}-\tilde{\varepsilon}_{xx}\right)^2\psi_L'(\tilde{x})\,\mathrm{d}x\nonumber\\
    =&\int\left(\tilde{\varepsilon}^2+2\tilde{\varepsilon}_{x}^2+\tilde{\varepsilon}_{xx}^2\right)\psi_L'(\tilde{x})\,\mathrm{d}x-\int\tilde{\varepsilon}^2\psi_L'''(\tilde{x})\,\mathrm{d}x,
\end{align*}
which implies that
\begin{align}\label{var8}
\int\left(\tilde{\varepsilon}^2+\tilde{\varepsilon}_{x}^2+\tilde{\varepsilon}_{xx}^2\right)\psi_L'(\tilde{x})\,\mathrm{d}x\leq C\int\varepsilon^2\psi_L'(\tilde{x})\,\mathrm{d}x.
\end{align}
for $L>4.$ We deduce from \eqref{varve} and \eqref{var1}--\eqref{var8} that
\begin{align}\label{varve1}
    \frac{\mathrm{d}}{\mathrm{d}t}\int\varepsilon^2\psi_L(\tilde{x})\,\mathrm{d}x\lesssim 2b\int\varepsilon^2\psi_L'(\tilde{x})\,\mathrm{d}x+e^{(R_0+4b(t-t_0)-x_0)/L}.
\end{align}
where $R_0$ is large enough such that $\operatorname{sech}(R_0/L)\leq \frac{1}{4}.$
Integrating \eqref{varve1} on $(t_0-1,t_0)$, we obtain that, for a positive  constant $C>0,$
\begin{equation*}
    \mathcal{E}(t_0)-\mathcal{E}(t_0-1)\leq Ce^{- x_0/L},
  \end{equation*}
  where $\mathcal{E}(t_0-1)=\int \varepsilon^2(x,t_0-1)\psi_L(\tilde{x}(t_0-1))\,\mathrm{d}x\lesssim1.$
 We similarly integrate \eqref{varve1} on $[t_0,t_0+\tau]$, for any $\tau>0,$ to get
\begin{equation*}
    \mathcal{E}(t_0+\tau)-\mathcal{E}(t_0)\leq Ce^{- x_0/L}e^{-(t_0+\tau)/L}.
  \end{equation*}
We deduce from the properties of $\psi_L$ that
$$\mathcal{E}(t_0+\tau)=\int \varepsilon^2(x,t_0+\tau)\psi_L(x+4b\tau-x_0)\,\mathrm{d}x\underset{t \to \infty}{\longrightarrow}0,$$
and $\mathcal{E}(t_0)\geq-Ce^{-\nu x_0/L},$ which implies that $|\mathcal{E}(t_0)|\lesssim e^{-x_0/L}.$
In addition, for $ \tilde{t}_0\in\mathbb{R},$ integrating \eqref{varve1} on $(\tilde{t}_0,\tilde{t}_0+1)$, we get that
\begin{align}\label{vardc}
\int&\varepsilon^2(x,t_0)\psi_L(x-x_0)\,\mathrm{d}x+\int_{\tilde{t}_0}^{\tilde{t}_0+1}\int\varepsilon^2(x,t)\psi'_L(\tilde{x})\,\mathrm{d}x\,\mathrm{d}t
    \lesssim e^{- x_0/L}.
\end{align} This completes the proof of Lemma \ref{mener}.
\end{proof}

\begin{corollary}\label{cro3.1}
  Let $m\in C(\mathbb{R},H_{\alpha}^1(\mathbb{R}))$ be the solutions of \eqref{bfe} satisfying \eqref{lcuv}. For a positive constant $\delta_0$, it holds that
  \begin{equation}\label{vml1}
    \sup_{t\in\mathbb{R}}\int\left(m^{-\frac{1}{b}}+\frac{1}{b^2}m^{-\frac{1}{b}-2}m_x^2\right)(x+\rho(t),t)e^{\delta_0|x|}\,\mathrm{d}x\lesssim1,
  \end{equation}
  and
  \begin{equation}\label{vml2}
   \int_{\tilde{t}_0}^{\tilde{t}_0+1}\int\left(m^{-\frac{1}{b}}+\frac{1}{b^2}m^{-\frac{1}{b}-2}m_x^2\right)e^{x_1/L}\,\mathrm{d}x\,\mathrm{d}t
    \lesssim e^{- x_0/L}.
  \end{equation}
\end{corollary}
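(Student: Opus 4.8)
The bounds \eqref{vml1}--\eqref{vml2} are the nonlinear counterpart of Lemma \ref{hol} and of the space--time estimate \eqref{c2.8} from the linear analysis, and the plan is to deduce them from the monotonicity estimate of Lemma \ref{uvm1} by the very argument already carried out in its proof, supplemented by the reflection symmetry of \eqref{bfe}. Recall from \eqref{tf}--\eqref{psi} that $\psi_L(y)\gtrsim e^{\nu y/L}$ and $\psi_L'(y)\gtrsim e^{\nu y/L}$ for $y\le 0$, while $\psi_L(y),\,\psi_L'(y)\lesssim e^{\nu y/L}$ for all $y\in\mathbb{R}$, with $\nu>0$ and $L>4$ since $b<-1$; and recall that inside the proof of Lemma \ref{uvm1} (integrate \eqref{i11} on $[t_0,t_0+\tau]$, let $\tau\to\infty$, invoke \eqref{lcuv}) one already obtains the uniform pointwise bound $|I(t_0)|\lesssim e^{-x_0/L}$ for every $x_0>0$ and $t_0\in\mathbb{R}$, together with the space--time bound \eqref{i12}.

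First I would exploit the pointwise bound on $I(t_0)$: at $t=t_0$ the weight in $I$ is $\psi_L\big(x-\rho(t_0)-x_0\big)$ and $m>0$, so the lower bound $\psi_L(y)\gtrsim e^{\nu y/L}$ on $\{y\le0\}$ gives
\[
\int_{x<\rho(t_0)+x_0}\Big(m^{-\frac1b}+\tfrac1{b^2}m^{-\frac1b-2}m_x^2\Big)(x,t_0)\,e^{\nu(x-\rho(t_0))/L}\,\mathrm{d}x\ \lesssim\ 1,
\]
uniformly in $x_0$ and $t_0$; letting $x_0\to\infty$ and using monotone convergence yields \eqref{vml1} on the half-line $x>0$ with $\delta_0=\nu/L$. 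To obtain the decay on $x<0$, I would use that $\tilde m(x,t):=m(-x,-t)$ is again a solution of \eqref{bfe} (with $\tilde u(x,t)=u(-x,-t)$, consistent with $u=p*m$ since $p$ is even), and that $\tilde m$ still satisfies the orbital closeness \eqref{z3.2} and the localization \eqref{lcuv} for a suitable modulation, because $Q$ is even and the $H^1$-norm, the $\mathcal{Z}$-norm and condition \eqref{lcuv} are invariant under $(x,t)\mapsto(-x,-t)$. Hence Lemma \ref{zlem3.1} and Lemma \ref{uvm1} apply verbatim to $\tilde m$, and repeating the previous step for $\tilde m$ gives the same exponential control, i.e.\ \eqref{vml1} for $m$ on $x<0$. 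Adding the two half-line estimates proves \eqref{vml1}.

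For \eqref{vml2} I would start from \eqref{i12}, which bounds the space--time integral of $\big(m^{-\frac1b}+\tfrac1{b^2}m^{-\frac1b-2}m_x^2\big)\psi_L'(x_1)$ over $[\tilde t_0,\tilde t_0+1]\times\mathbb{R}$ by $e^{-x_0/L}$, and then upgrade the weight $\psi_L'(x_1)$ to $e^{x_1/L}$ exactly as in the proofs of Lemmas \ref{lmp2} and \ref{hol}: on $\{x_1<0\}$ one has $\psi_L'(x_1)\gtrsim e^{\nu x_1/L}$ directly, while on $\{x_1>0\}$ one replaces $x_0$ by $x_0+R$, integrates, multiplies by $e^{-x_0/L}$ and lets $R\to\infty$, controlling the transferred weight via $\psi_L(y)\lesssim e^{\nu y/L}$; this produces \eqref{vml2}.

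No genuinely new analytical step is needed beyond the linear case. The two points that require care are: (a) checking that the reflected solution $\tilde m$ really lies in the scope of Lemmas \ref{zlem3.1} and \ref{uvm1}, which reduces to the evenness of $Q$ and the reflection-invariance of the relevant norms and of \eqref{lcuv}; and (b) tracking the uniformity of all constants in $t_0$, so that the supremum in \eqref{vml1} is legitimate, which is built into the monotonicity-plus-limiting structure. The only mild bookkeeping issue, already present in Lemma \ref{uvm1}, is that the exponential rates produced are proportional to $\nu/L$ rather than to $1/L$; since $\delta_0$ is only required to be positive this is harmless for \eqref{vml1}, and in \eqref{vml2} it is absorbed exactly as in the linear lemmas.
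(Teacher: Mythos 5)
Your proposal is correct and follows essentially the same route as the paper: both arguments take the two terms of the monotonicity estimate \eqref{i12} from Lemma \ref{uvm1}, lower-bound $\psi_L$ and $\psi_L'$ by exponentials on the negative half-line, pass to the limit $x_0\to\infty$ after multiplying by the appropriate exponential factor, and then use the reflected solution $m(-x,-t)$ to obtain the two-sided weight $e^{\delta_0|x|}$. Your extra care in verifying that the reflected solution satisfies the hypotheses and your remark on the $\nu/L$ versus $1/L$ rate are refinements of, not departures from, the paper's proof.
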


\begin{proof}
   Note that $\psi_L(y)\geq Ce^{y/L}$ and $\psi_L'(y)\geq Ce^{y/L}$ for $y<0.$ It follows from \eqref{i12} in Lemma \ref{uvm1} and passing to the limit as $x_0\to\infty$ and multiplying by $e^{-x_0/L}$ that
   $$\sup_{t\in\mathbb{R}}\int\left(m^{-\frac{1}{b}}+\frac{1}{b^2}m^{-\frac{1}{b}-2}m_x^2\right)(x+\rho(t),t)e^{x/L}\,\mathrm{d}x\lesssim1.$$
   We similarly note that $m(-x,-t)$ is the solutions of \eqref{bfe}, then the same result for $m(-x,-t)$ also hold for $\delta_0<\frac1L$, which implies that \eqref{vml1} holds.

   Next, we know that $\psi_L'(y)\geq Ce^{y/L}$ for $y<0.$ It follows from \eqref{i12} that
   \begin{align}
     \int_{\tilde{t}_0}^{\tilde{t}_0+1}\int_{x<\rho(t_0)-4b(t-t_0)+x_0}&\left(m^{-\frac{1}{b}}+\frac{1}{b^2}m^{-\frac{1}{b}-2}m_x^2\right)(x,t) e^{(x-\rho(t_0)+4b(t_0-t))/L}\,\mathrm{d}x\,\mathrm{d}t\lesssim1.\label{c3.44}
   \end{align}
We obtain \eqref{vml2} by taking the limit $x_0\to\infty$ and multiplying \eqref{c3.44} by $e^{-x_0/L}$. The proof of this Corollary is completed.
\end{proof}

Proceeding as in the proof of Corollary \ref{cro3.1}, we finally derive the uniform exponential decay of $\varepsilon$ from the monotonicity properties.
\begin{corollary}\label{uvlem3.5}
    Under the assumptions of Lemma \ref{zlem3.1}, for sufficiently small $\epsilon_0$, there exists a positive constant $\delta_0$ such that
  \begin{equation}\label{lcvar}
    \sup_{t\in\mathbb{R}}\int\varepsilon^2(x,t)e^{\delta_0|x|}\,\mathrm{d}x\lesssim1.
  \end{equation}
\end{corollary}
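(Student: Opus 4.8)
The plan is to mimic the proof of Corollary~\ref{cro3.1} almost verbatim, with the monotonicity of Lemma~\ref{mener} in place of that of Lemma~\ref{uvm1}; the one additional ingredient is the reversibility of \eqref{bfe} under $(x,t)\mapsto(-x,-t)$, which turns a decay estimate on the right half‑line into one on the left.

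First I would use the lower bound $\psi_L(y)\geq C e^{y/L}$ for $y\leq 0$, valid for the weight defined in \eqref{tf}. Restricting the first (nonnegative) term of the estimate \eqref{vardc} in Lemma~\ref{mener} to $\{x<x_0\}$ gives, uniformly in $t_0\in\mathbb{R}$,
\[
\int_{x<x_0}\varepsilon^2(x,t_0)\,e^{(x-x_0)/L}\,\mathrm{d}x\lesssim e^{-x_0/L},
\]
hence, after multiplying by $e^{x_0/L}$ and letting $x_0\to+\infty$ by monotone convergence,
\[
\sup_{t\in\mathbb{R}}\int_{\mathbb{R}}\varepsilon^2(x,t)\,e^{x/L}\,\mathrm{d}x\lesssim 1,
\]
which controls $\varepsilon$ on the half‑line $x>0$.

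For the half‑line $x<0$ I would observe that $(x,t)\mapsto m(-x,-t)$ is again a global solution of \eqref{bfe}, and that, since $Q$, $\alpha$ and $F_2'(Q)$ are even, the modulated error associated with it equals $\varepsilon(-x,-t)$ provided $\rho(t)$ is replaced by $-\rho(-t)$; in particular it still satisfies the orthogonality conditions \eqref{voc}, the bounds \eqref{z3.6}--\eqref{z3.7}, and an evolution equation of exactly the form \eqref{vart}, so Lemma~\ref{mener} applies to it unchanged. Reading the conclusion of the previous step for this reflected error and undoing the change of variables $x\mapsto-x$, $t\mapsto-t$ yields $\sup_{t\in\mathbb{R}}\int_{\mathbb{R}}\varepsilon^2(x,t)\,e^{-x/L}\,\mathrm{d}x\lesssim 1$. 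Adding the two bounds and using $e^{x/L}+e^{-x/L}\geq e^{\delta_0|x|}$ for any $0<\delta_0\leq 1/L$ gives \eqref{lcvar}. The smallness of $\epsilon_0$ is used only to guarantee the hypotheses of Lemmas~\ref{zlem3.1} and~\ref{mener} (in particular the favorable sign in \eqref{varve1}), and it is preserved by the reflection because $\inf_{\xi}\|m(\cdot+\xi,t)-\gamma Q\|_{H^1}$ is invariant under $x\mapsto-x$.

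The main (and essentially only) point requiring care is the verification that this reflected problem genuinely falls under Lemma~\ref{mener}: one must check that the nonlocal drift $\mathcal{B}(Q)\mathcal{L}$ anti‑commutes with the spatial reflection $x\mapsto-x$, so that together with time reversal the leading term of \eqref{vart} is reproduced, while the quadratic remainder and the modulation term $\rho'(t)(\gamma Q'+\partial_x\varepsilon)$ transform compatibly and are absorbed exactly as in the derivation of \eqref{varve1} via \eqref{z3.7}. All of this follows from the parity of $Q$ and $\alpha$ and from the estimates already established for $m$, so beyond this bookkeeping the corollary is a one‑line consequence of Lemma~\ref{mener}.
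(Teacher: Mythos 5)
Your argument is correct and is essentially the paper's own: the paper proves this corollary by "proceeding as in Corollary \ref{cro3.1}", which is exactly your combination of the monotonicity bound \eqref{vardc} from Lemma \ref{mener}, the lower bound $\psi_L(y)\gtrsim e^{\nu y/L}$ for $y\le 0$ together with the limit $x_0\to\infty$, and the reflection symmetry $(x,t)\mapsto(-x,-t)$ of \eqref{bfe} (with $Q$, $\alpha$ even, so the modulated error of the reflected solution is $\varepsilon(-x,-t)$) to handle the left half-line. The only cosmetic discrepancy is the exponent, where the sharp rate is $\delta_0\le\nu/L$ rather than $1/L$, a constant the paper itself also drops in this section.
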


\subsection{Proof of nonlinear Liouville theorem}
In this subsection, we process to prove Theorem \ref{nlt}.
Letting $m_0(x)\in \mathcal{Z}(\mathbb{R})$ be the initial data stated in Theorem \ref{nlt}, we obtain a unique global solution $m(x,t)$ of \eqref{bfe}. Invoking the orbital stability of the unique global solution $m(x,t)$, there exists $\rho(t)\in C^1(\mathbb{R})$ such that
\begin{equation*}
m(x,t)=\gamma Q(x-\rho(t))+\varepsilon(x-\rho(t),t),
\end{equation*}
which satisfies
$$\sup_{t>0}\left\|m(\cdot,t)-\gamma Q(\cdot-\rho(t))\right\|_{H^1(\mathbb{R})}\leq \epsilon,$$
for any $t \in \mathbb{R}$. The function $\varepsilon(x,t)$ satisfies
\begin{equation}\label{vartt}
\varepsilon_t=\frac{1}{1-b}\mathcal{B}(Q)\mathcal{L}\varepsilon+\partial_x\left(\frac{b-1}{2}\left((\partial_x\tilde{\varepsilon})^2-\tilde{\varepsilon}^2\right)-\tilde{\varepsilon}\varepsilon\right)+\rho'(t)\left(\gamma Q'+\partial_x\varepsilon\right),
\end{equation}
where $\tilde{\varepsilon}=\left(1-\partial_x^2\right)^{-1}\varepsilon$, $\mathcal{B}$ and $\mathcal{L}$ are defined in \eqref{ob} and \eqref{ol}.
Let us define two $C^{\infty}-$functions $\phi_M(x)=\phi(x/M)$ and $\Phi_M=b^2Q\left(\frac{x}{M+t^2}\right)$ as in \eqref{vgf}--\eqref{gine}. Relying on \eqref{vartt}, we obtain that, for some $M>0$ (to be chosen later),
 \begin{align}\label{varveq}
    \frac{1}{2}\frac{\mathrm{d}}{\mathrm{d}t}\int\varepsilon^2\phi_M(x)\,\mathrm{d}x
=&\int\varepsilon\phi_M(x)\left(-bq'\varepsilon-q\varepsilon_x-bQ\tilde{\varepsilon}_x-Q'\tilde{\varepsilon}\right)\,\mathrm{d}x\nonumber\\
&+\int\varepsilon\phi_M(x)\partial_x\left(\frac{b-1}{2}\left((\partial_x\tilde{\varepsilon})^2-\tilde{\varepsilon}^2\right)-\tilde{\varepsilon}\varepsilon\right)\,\mathrm{d}x\nonumber\\
&+\rho'(t)\int\varepsilon\phi_M(x)\left(\gamma Q'+\partial_x\varepsilon\right)\,\mathrm{d}x.
\end{align}

Proceeding analogously to Section \ref{s2.2}, to the aim of proving nonlinear Liouville property, we set $w=\varepsilon\sqrt{\Phi_M}$ such that
\begin{align}\label{lw}
    \left(\mathcal{L}w,w\right)=&\frac{2k}{b^2}\int\varepsilon_x^2\Phi_M\alpha\,\mathrm{d}x-\frac{2k(b+1)}{b}\int\varepsilon^2\Phi_M\alpha\,\mathrm{d}x-\frac{k}{b^2}\int\varepsilon^2\Phi_M'\alpha_x\,\mathrm{d}x\nonumber\\
    &-\frac{k}{b^2}\int\varepsilon^2\Phi_M''\alpha\,\mathrm{d}x+\frac{k}{2b^2}\int\varepsilon^2\frac{(\Phi_M')^2}{\Phi_M}\alpha\,\mathrm{d}x.
\end{align}
Similar to \eqref{lzze}, for $b<-1,$ we also claim the following {\bf coercivity property},
\begin{align}\label{lwe}
\left(\mathcal{L}w,w\right)\geq\tilde{\lambda}\int(\varepsilon^2\alpha+\varepsilon_x^2\alpha)\Phi_M\,\mathrm{d}x.
\end{align}
where $\tilde{\lambda}>0$ will be chosen later. The proof of this coercivity property is similar to the previous result in \eqref{lzze}. For this purpose, we first present the following lemma.
\begin{lemma}\label{bvar1}
  For the bilinear form
  \begin{equation}\label{lvar}
   L(\varepsilon,\varepsilon)=\frac{2k}{b^2}\int\varepsilon_x^2\Phi_M\alpha\,\mathrm{d}x-\frac{2k(b+1)}{b}\int\varepsilon^2\Phi_M\alpha\,\mathrm{d}x,
  \end{equation}
  there exists a positive constant $\tilde{\kappa}$ such that
  \begin{equation}\label{lvac}
    L(\varepsilon,\varepsilon)\geq\tilde{\kappa}\int(\varepsilon^2\alpha+\varepsilon_x^2\alpha)\Phi_M\,\mathrm{d}x,
  \end{equation}
where $\varepsilon$ satisfies \eqref{z3.2}--\eqref{z3.7}.
\end{lemma}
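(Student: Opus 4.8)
The plan is to adapt, essentially verbatim, the proof of Lemma~\ref{bfg1}, now with $\varepsilon$ in the role of $\eta$ and the modulation orthogonality conditions \eqref{voc} playing the role of \eqref{woc1}--\eqref{woc2}. First I would record the same integration-by-parts identity as in \eqref{b1dc}:
\[
L(\varepsilon,\varepsilon)=\bigl(\mathcal{L}\sqrt{\Phi_M}\varepsilon,\sqrt{\Phi_M}\varepsilon\bigr)-\int\bigl(\partial_x\sqrt{\Phi_M}\bigr)^2\varepsilon^2\alpha\,\mathrm{d}x-\int\Phi_M'\varepsilon\varepsilon_x\alpha\,\mathrm{d}x,
\]
which is purely algebraic and uses no property of $\varepsilon$.

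The crux is the second step: verifying that $g:=\sqrt{\Phi_M}\varepsilon$ satisfies the relaxed hypothesis \eqref{cpc} of Lemma~\ref{bfgg}, i.e. $|(g,SQ)|+|(g,Q')|\leq\theta\|g\|_{H^1_\alpha}$ for a fixed small $\theta$. Here I would use that $\varepsilon$ is $\alpha$-orthogonal to $Q'$ and to $F_2'(Q)$ \emph{exactly}, that $SQ=\mathcal{L}(Q^2)$ is $\alpha$-orthogonal to the kernel of $\mathcal{L}$ (as shown inside the proof of Lemma~\ref{bfg11}), and that the weight $\Phi_M(x)=b^2Q\bigl(x/(M+t^2)\bigr)$ is slowly varying on the scale $M+t^2\geq M$ while $Q'$, $SQ$ and---by Corollary~\ref{uvlem3.5}---$\varepsilon$ decay exponentially at a rate independent of $t$. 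Replacing $\sqrt{\Phi_M}$ by its value at the origin thus costs only an $O(M^{-1})$ correction (and, via \eqref{z3.6}, an $O(\epsilon)$ correction) relative to $\|g\|_{H^1_\alpha}$; the evenness of $Q$ (so $Q'(0)=\alpha'(0)=0$) together with a Taylor expansion of $\sqrt{\Phi_M}/\alpha$ about $x=0$ then lets one pass between the weighted pairings $(\varepsilon,\cdot)_\alpha$ and the unweighted pairings appearing in \eqref{cpc}.

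Once \eqref{cpc} is verified, Lemma~\ref{bfgg} yields $(\mathcal{L}g,g)\geq\frac{3\lambda_1}{4}\|g\|_{H^1_\alpha}^2$, and the two error terms in the identity above are controlled exactly as in \eqref{c2.59}--\eqref{c2.60}: by \eqref{gine} one has $(\partial_x\sqrt{\Phi_M})^2\lesssim M^{-2}\Phi_M$ and $|\Phi_M'|\lesssim M^{-1}\Phi_M$, so each error term is $\leq\frac{\lambda_1}{8}\int(\varepsilon^2\alpha+\varepsilon_x^2\alpha)\Phi_M\,\mathrm{d}x$ for $M$ large, and $\|g\|_{H^1_\alpha}^2$ itself differs from $\int(\varepsilon^2\alpha+\varepsilon_x^2\alpha)\Phi_M\,\mathrm{d}x$ only by terms of the same order. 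Collecting everything gives \eqref{lvac} with, say, $\tilde{\kappa}=\lambda_1/2$.

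The hard part will be the second step above: transferring the exact but $\alpha$-weighted orthogonality \eqref{voc} of $\varepsilon$---where moreover $F_2'(Q)$ is not literally $SQ$---into the unweighted, approximate orthogonality of $\sqrt{\Phi_M}\varepsilon$ demanded by \eqref{cpc}, uniformly in $t\in\mathbb{R}$. Everything else is a routine repetition of the estimates already carried out for the linear problem in Section~\ref{s2.2}.
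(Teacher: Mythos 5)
Your proposal follows essentially the same route as the paper's proof: the identity \eqref{b1wc}, verification of \eqref{cpc} for $\sqrt{\Phi_M}\varepsilon$ so that Lemma \ref{bfgg} applies, and the error estimates \eqref{verw4}--\eqref{verw5} coming from \eqref{gine}, concluding with $\tilde{\kappa}=\lambda_1/2$. The step you flag as the hard part is exactly the one the paper also treats only by assertion (``it follows from the definition of $\Phi_M(x)$ and the properties of $Q$''), so your sketch of how to pass from the $\alpha$-weighted orthogonality \eqref{voc} to the unweighted near-orthogonality in \eqref{cpc} is, if anything, more detailed than what the paper records.
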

\begin{proof}
We compute by using the definition of $L(\varepsilon,\varepsilon)$ in \eqref{lvar} that
 \begin{equation}\label{b1wc}
   L(\varepsilon,\varepsilon)=\left(\mathcal{L}\sqrt{\Phi_M}\varepsilon,\sqrt{\Phi_M}\varepsilon\right)-\int\left(\partial_x\sqrt{\Phi_M}\right)^2\varepsilon^2\alpha\,\mathrm{d}x
   -\int \Phi_M'\varepsilon\varepsilon_x\alpha\,\mathrm{d}x.
 \end{equation}
The proof of Lemma \ref{bvar1} follows the lines of
Lemma \ref{bfg1}. It  remains to check that $\varepsilon\sqrt{\Phi_M}$ satisfies \eqref{cpc} in Lemma \ref{bfgg}.
It follows from the definition of $\Phi_M(x)$ and the properties of $Q$ that
 \begin{equation*}\label{verw}
   \left|\int\sqrt{\Phi_M}\varepsilon\frac{SQ}{\left\|SQ\right\|_{L^2}}\,\mathrm{d}x\right|\leq\theta\left\|\sqrt{\Phi_M}\varepsilon\right\|_{L_{\alpha}^2}
 \end{equation*}
 and
 \begin{equation*}\label{verw2}
   \left|\int\sqrt{\Phi_M}\varepsilon\frac{Q'}{\left\|Q'\right\|_{L^2}}\,\mathrm{d}x\right|\leq\theta\left\|\sqrt{\Phi_M}\varepsilon\right\|_{L_{\alpha}^2},
 \end{equation*}
the above two inequalities imply that
\begin{equation}\label{verw3}
  \left(\mathcal{L}\sqrt{\Phi_M}\varepsilon,\sqrt{\Phi_M}\varepsilon\right)\geq \frac{3\lambda_1}{4}\left\|\sqrt{\Phi_M}\varepsilon\right\|_{H_{\alpha}^1}^2.
\end{equation}
On the other hand, by using the properties of $\Phi_M(x)$ in \eqref{gine}, we have
\begin{equation}\label{verw4}
 \left|\int\left(\partial_x\sqrt{\Phi_M}\right)^2\varepsilon^2\alpha\,\mathrm{d}x\right|\leq\frac{C}{M^2}\int \varepsilon^2\alpha\Phi_M\,\mathrm{d}x\leq\frac{\lambda_1}{8}\int \varepsilon^2\alpha\Phi_M\,\mathrm{d}x,
\end{equation}
and
\begin{equation}\label{verw5}
\left|\int \Phi_M'\varepsilon\varepsilon_x\alpha\,\mathrm{d}x\right|\leq\frac{C}{M^2}\int (\varepsilon^2\alpha+\varepsilon_x^2\alpha)\Phi_M\,\mathrm{d}x\leq\frac{\lambda_1}{8}\int (\varepsilon^2\alpha+\varepsilon_x^2\alpha)\Phi_M\,\mathrm{d}x.
\end{equation}
We conclude gathering \eqref{b1wc} and \eqref{verw3}--\eqref{verw5} that \eqref{lvac} holds if we can choose $\tilde{\kappa}=\lambda_1/2,$ where $\lambda_1$ is given in Lemma \ref{bfg11}. This completes the proof of the Lemma.
\end{proof}

\begin{proof}[Proof of \eqref{lwe}]
To that end, it is still necessary to consider the last three term of \eqref{lw}. For $b<-1,$ it is quite similar to a related proof of \eqref{lzze}.
By using \eqref{vgf}--\eqref{gine} and the choice of $M=\max\left\{1,\frac{32A(1-b)}{\lambda_1(b+1)^2},\frac{32A(1-b)}{\lambda_1}\right\}$ for $b<-1,$  we also obtain
\begin{align}\label{3lw}
  \left|-\frac{k}{b^2}\int\varepsilon^2\Phi_M'\alpha_x\,\mathrm{d}x-\frac{k}{b^2}\int\varepsilon^2\Phi_M''\alpha\,\mathrm{d}x+\frac{k}{2b^2}\int\varepsilon^2\frac{(\Phi_M')^2}{\Phi_M}\alpha\,\mathrm{d}x\right|\leq
 \frac{\lambda_1}{4}\int\varepsilon^2\alpha\Phi_M\,\mathrm{d}x.
\end{align}
 We deduce gathering \eqref{lvac} and \eqref{3lw} that \eqref{lwe} holds for $\tilde{\lambda}=\lambda_1/4$. This completes the proof.
\end{proof}

To the aim of proving nonlinear Liouville property, we claim the following.\\
For $\frac12<s<1$, $b<-1$ and a positive constant $C$, there holds
\begin{align}\label{wlw}
    \left|\frac12\frac{1}{(M+t^2)^{s}}\frac{\mathrm{d}}{\mathrm{d}t}\int \varepsilon^2\phi_M\,\mathrm{d}x+\left(\mathcal{L}w,w\right)\right|\leq \frac{C}{(M+t^2)^{s}}\left\|\varepsilon\right\|^2_{H^1_{\alpha}}.
\end{align}
Indeed, for $\lambda_1>0,$  we deduce from \eqref{varveq} and \eqref{z3.6} that
\begin{align}\label{IW1}
  \left|(-b)\int \varepsilon^2\phi_M q'\,\mathrm{d}x \right|&+\left|\int \varepsilon\varepsilon_x\phi_M q\,\mathrm{d}x \right|+\left|(-b)\int \varepsilon\tilde{\varepsilon}_x\phi_M Q\,\mathrm{d}x \right|+\left|\int \varepsilon\tilde{\varepsilon}\phi_M Q'\,\mathrm{d}x \right|\nonumber\\
  &\leq C\left(\left\|\varepsilon\right\|_{L^{\infty}}+\left\|\tilde{\varepsilon}\right\|_{L^{\infty}}+
  \left\|\tilde{\varepsilon}_x\right\|_{L^{\infty}}\right)\left\|\varepsilon\right\|_{H^1_{\alpha}}\leq K_0\epsilon\left\|\varepsilon\right\|_{H^1_{\alpha}},
\end{align}
 In addition, we also get that
\begin{align}\label{IW2}
  &\left|(b-1)\int\varepsilon\tilde{\varepsilon}_x\tilde{\varepsilon}_{xx}\phi_M\,\mathrm{d}x\right|+\left|(b-1)\int\varepsilon\tilde{\varepsilon}\tilde{\varepsilon}_{x}\phi_M\,\mathrm{d}x\right|
  +\left|-\int\varepsilon^2\tilde{\varepsilon}_x\phi_M\,\mathrm{d}x-\int\varepsilon\varepsilon_x\tilde{\varepsilon}\phi_M\,\mathrm{d}x\right|\nonumber\\
  &\leq\left|(b-1)\int\varepsilon\tilde{\varepsilon}_x\tilde{\varepsilon}_{xx}\phi_M\,\mathrm{d}x\right|+\left|(b-1)\int\varepsilon\tilde{\varepsilon}\tilde{\varepsilon}_{x}\phi_M\,\mathrm{d}x\right|
  +\left|-\frac12\int\varepsilon^2\tilde{\varepsilon}_x\phi_M\,\mathrm{d}x+\frac{1}{2}\int\varepsilon^2\tilde{\varepsilon}\phi_M'\,\mathrm{d}x\right|\nonumber\\
  &\leq C\left(\left\|\varepsilon\right\|_{L^{\infty}}+\left\|\tilde{\varepsilon}\right\|_{L^{\infty}}+
  \left\|\tilde{\varepsilon}_x\right\|_{L^{\infty}}\right)\left\|\varepsilon\right\|_{H^1_{\alpha}}\leq K_0\epsilon\left\|\varepsilon\right\|^2_{H^1_{\alpha}},
\end{align}
and
\begin{align}\label{IW3}
  |\rho'(t)|\left|\int  \varepsilon\phi_M\left(\gamma Q'+\varepsilon_x\right)\,\mathrm{d}x\right|=& |\rho'(t)|\left|\int\gamma Q'\varepsilon\phi_M\,\mathrm{d}x -\frac12\int  \varepsilon^2\phi_M'\,\mathrm{d}x\right|
  \leq K_0\epsilon\left\|\varepsilon\right\|^2_{H_{\alpha}^1}.
\end{align}
Gathering \eqref{varveq}, \eqref{lw}, \eqref{3lw} and \eqref{IW1}--\eqref{IW3}, we conclude the proof of \eqref{wlw} .

Since \eqref{lwe} and \eqref{wlw} are proven, we are in position of showing Theorem \ref{nlt}.
\begin{proof}[Proof of Theorem \ref{nlt}]
Applying \eqref{lwe} and \eqref{wlw}, we obtain that, for a positive constant $C_{b,\lambda_1}$,
\begin{equation}\label{VAR}
  -\frac12\frac{1}{(M+t^2)^{s}}\frac{\mathrm{d}}{\mathrm{d}t}\int \varepsilon^2\phi_M\,\mathrm{d}x\geq\frac{\lambda_1}{4}\int(\varepsilon^2\alpha+\varepsilon_x^2\alpha)\Phi_M\,\mathrm{d}x-\frac{C_{b,\lambda_1}}{(M+t^2)^{s}}.
\end{equation}
provided $\epsilon_0$ is chosen to be sufficiently small.
From the boundedness of $\phi(x),$ we thus have, by integrating \eqref{VAR} on $\mathbb{R},$
\begin{equation}\label{nlt1}
  \int_{\mathbb{R}}\int \varepsilon^2(x,t)\Phi_M(x)\,\mathrm{d}x\,\mathrm{d}t\leq C_{b,M,\lambda_1}\sup_t\|\varepsilon(\cdot,t)\|_{L^2}<+\infty.
\end{equation}
Arguing as in \eqref{c2.73}--\eqref{c2.79}, we deduce from \eqref{lcvar} that $\varepsilon(x,t)=0$ for $x,t\in\mathbb{R}.$ Observing the definition of $\varepsilon$ in \eqref{z3.3} and \eqref{vartt}, we obtain that
$$\varepsilon(x,t)=0,~~m(x+\rho(t),t)=\gamma Q(x).$$
In addition, we derive from \eqref{z3.7} that $\rho(t)=\rho(0),$
which completes the proof of \eqref{nltt}. Therefore,  the proof of Theorem \ref{nlt} is complete.
\end{proof}

\section{Proof of asymptotic stability}\label{s4}
We now focus our  attention in the last  section  to the proof of Theorem \ref{as} for $b<-1$. The core objective is to show that any global solution starting sufficiently close to a lefton (in the function space $\mathcal{Z}(\R)$) will converge to a lefton (up to translation and scaling) as time tends to infinity. We start by leveraging the orbital stability result (from prior work) and the modulation argument from Section \ref{s3} to ensure the solution remains close to a lefton for all time. We then analyze the long-time behavior using a sequence of time tending to infinity: by extracting a subsequence of the solution (aligned via the modulation parameter), we show this subsequence converges to a limit object in a Sobolev space. We prove this limit object is a solution of the $b$-family of equations that satisfies the localized conditions of the nonlinear Liouville Theorem, hence the limit is a lefton. The key reliance for us to prove the asymptotic stability is smoothness and
rigidity properties of the solutions stated in Section \ref{s3}. Our proof strategy is inspired
by \cite{MM3}, which involve compactness of limit object and compact decay conditions.

Let $m_0$ be the initial data stated in Theorem \ref{as}, we obtain a unique solution $m(x, t)$ of \eqref{bfe} by
global well-posedness result. Relying on the stability results, we have
\begin{equation*}
\inf_{y\in\mathbb{R}}\left\|m(\cdot,t)-\gamma Q(\cdot-y(t))\right\|_{H^1(\mathbb{R})}\leq C\epsilon,
\end{equation*}
for any $t>0$.
We deduce from Lemma \ref{zlem3.1} that there exists $\rho(t)\in C^1(\mathbb{R})$ such that
\begin{align}\label{mb}
\|m(\cdot+\rho(t), t)\|_{H^1(\mathbb{R})} \leq\left\|m(\cdot+\rho(t), t)-\gamma Q\right\|_{H^1(\mathbb{R})}+|\gamma|\left\|Q\right\|_{H^1(\mathbb{R})} \lesssim K_0 +  \epsilon
\end{align} for some fixed positive constant $ K_0>0$.
For a time sequence $t_n$ with $t_n\to\infty~(n\to\infty),$ the $H^1$ boundedness of $m(\cdot+\rho(t_n), t_n)$ implies that there exists a time subsequence (still denoted by ${t_n}$) and $m_0^*\in\mathcal{Z}(\mathbb{R})$ such that
\begin{align}\label{mb2}
m(\cdot+\rho(t_n), t_n)\rightharpoonup m_0^*\quad\text{in}\quad H^1(\mathbb{R}).
\end{align}
It follows from \eqref{z3.3} and \eqref{z3.6} that there exists a time sequence $(t_n)_{n\in\mathbb{N}}$ and $\varepsilon_0^*\in \mathcal{Z}(\mathbb{R})$ such that
\begin{equation}\label{iweak}
\varepsilon\left(\cdot, t_n\right)=m(\cdot+\rho(t_n), t_n)-\gamma Q\rightharpoonup \varepsilon_0^* \text { in } H^1_{\alpha}(\mathbb{R}),
\end{equation}
where $t_n\to\infty$ as $n\to\infty.$
In particular, the solution sequence $ m(\cdot+\rho(t_n), t_n)$ converges to $m_0^*(x)$ in $H^1(\mathbb{R})$  as $n\to\infty.$
It is clear that $m^*(x,t)$ is the solution of \eqref{bfe} with the initial data $m_0^*.$
By employing the stability conclusion \eqref{z3.2}, we deduce that $\|m_0^*-Q\|_{H^1(\mathbb{R})}\leq C\epsilon$ and
\begin{align}\label{stbm}
  \|m^*(\cdot,t)-\gamma Q(\cdot-\rho^*(t))\|_{H^1(\mathbb{R})}\leq C\epsilon,
\end{align}
where $\rho^*$ is the modulation parameter corresponding to the decomposition of $m^*$ stated in Lemma \ref{zlem3.1}.

Now we are going to present some technical lemmas. The aim of the following lemmas is to verify that the limit object satisfies the nonlinear Liouville property. We start with a monotonicity argument for $m(x,t)$.
\begin{lemma}\label{male}
  Let $\psi_L$ be defined as in \eqref{tf}--\eqref{psi} for $L\geq4$. Thus, we have for $x_0>0$ and $t\geq0$,
  \begin{align}\label{ma1}
    \limsup_{t\to\infty}\int \left(m^2+m_x^2\right)(x+\rho(t),t)\psi_L(x-x_0)\,\mathrm{d}x
    \lesssim e^{-x_0/L},
  \end{align}
  and
 \begin{align}\label{ma2}
    \limsup_{t\to\infty}\int \left(m^{-\frac1b}+m^{-\frac1b-2}m_x^2\right)(x+\rho(t),t)\psi_L(x-x_0)\,\mathrm{d}x
    \lesssim e^{-x_0/L}.
  \end{align}
\end{lemma}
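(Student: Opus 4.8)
The plan is to obtain \eqref{ma2} essentially for free from the monotonicity machinery already set up in Lemma \ref{uvm1}, and then to deduce \eqref{ma1} from \eqref{ma2} by a pointwise comparison that exploits the uniform $L^\infty$ bound on $m$ furnished by orbital stability; the regime $b<-1$ enters precisely in making that comparison valid.

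For \eqref{ma2}, first I would check that the localization hypothesis \eqref{lcuv} of Lemma \ref{uvm1} is available here. By Lemma \ref{zlem3.1} the error $\varepsilon(\cdot,t)=m(\cdot+\rho(t),t)-\gamma Q$ is uniformly small in $H^1_\alpha$; since $\alpha=Q^{-1/b-2}$ grows exponentially while $Q$ decays exponentially, both $\gamma Q$ and $\varepsilon(\cdot,t)$ decay exponentially in $x$, and as $-1/b\in(0,1)$ the tail $\int_{|x|>A_0}m^{-1/b}(x+\rho(t),t)\,\mathrm{d}x$ is then exponentially small, so \eqref{lcuv} holds with $A_0$ large and $\epsilon_0$ small. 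Lemma \ref{uvm1} then applies, and what its proof actually delivers is the uniform-in-time bound $|I(t_0)|\lesssim e^{-x_0/L}$ for all $x_0>0$, where $I(t_0)=\int\bigl(m^{-1/b}+\tfrac1{b^2}m^{-1/b-2}m_x^2\bigr)(x,t_0)\,\psi_L(x-\rho(t_0)-x_0)\,\mathrm{d}x$. Changing variables $x\mapsto x+\rho(t_0)$ and dropping the fixed positive factor $\tfrac1{b^2}$, this reads
\begin{equation*}
\int\bigl(m^{-\frac1b}+m^{-\frac1b-2}m_x^2\bigr)(x+\rho(t_0),t_0)\,\psi_L(x-x_0)\,\mathrm{d}x\lesssim e^{-x_0/L}
\end{equation*}
uniformly in $t_0$, and taking $\limsup_{t_0\to\infty}$ gives \eqref{ma2} (the bound is in fact uniform in time, so the $\limsup$ is merely a convenient weakening for the later weak-limit argument).

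Next I would deduce \eqref{ma1}. Orbital stability gives $\sup_t\|m(\cdot,t)\|_{H^1}\lesssim1$, hence $\sup_t\|m(\cdot,t)\|_{L^\infty}\lesssim1$ by the one-dimensional embedding $H^1\hookrightarrow L^\infty$. Since $b<-1$ the exponent $2+\tfrac1b$ is strictly positive, so, using $m>0$,
\begin{equation*}
m^2=m^{-\frac1b}\,m^{2+\frac1b}\le\|m\|_{L^\infty}^{2+\frac1b}\,m^{-\frac1b}\lesssim m^{-\frac1b},\qquad m_x^2=\bigl(m^{-\frac1b-2}m_x^2\bigr)\,m^{2+\frac1b}\lesssim m^{-\frac1b-2}m_x^2
\end{equation*}
pointwise. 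Multiplying by $\psi_L(x-x_0)\ge0$, integrating after the translation by $\rho(t)$, and invoking \eqref{ma2} then yields \eqref{ma1}.

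The main thing to get right is not hard analysis but bookkeeping: verifying that \eqref{lcuv} is genuinely inherited from the $H^1_\alpha$-smallness of $\varepsilon$ together with the exponential decay of the lefton, and recording the elementary inequality $2+\tfrac1b>0$, which is what lets $m^2$ and $m_x^2$ be controlled by the conserved-density quantities $m^{-1/b}$ and $m^{-1/b-2}m_x^2$ and is exactly the place where $b<-1$ is used.
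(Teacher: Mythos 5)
Your reduction of \eqref{ma1} to \eqref{ma2} is exactly the paper's argument (write $m^2=m^{-\frac1b}m^{2+\frac1b}$ and $m_x^2=(m^{-\frac1b-2}m_x^2)\,m^{2+\frac1b}$, use $2+\tfrac1b>0$ and the uniform $L^\infty$ bound coming from orbital stability), so that half is fine. The gap is in how you obtain \eqref{ma2}. The bound $|I(t_0)|\lesssim e^{-x_0/L}$ is not the statement of Lemma \ref{uvm1}; it is extracted inside its proof only under the standing hypothesis \eqref{lcuv}, i.e.\ uniform $L^{-\frac1b}$-localization of the solution around $\rho(t)$. In Section \ref{s4} that hypothesis is not available for $m$: it is precisely the property one is trying to establish (and the paper establishes it only for the weak limit $m^*$, in Lemmas \ref{lem4.5} and \ref{L3}, as the input to Theorem \ref{nlt}). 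Your attempt to supply \eqref{lcuv} for $m$ itself from Lemma \ref{zlem3.1} proves far too much: if orbital stability plus modulation really yielded uniform-in-time $L^{-\frac1b}$-localization of every solution with data $\epsilon$-close to $Q$, then Theorem \ref{nlt} would apply directly to $m$ and force $m(\cdot,t)=\gamma Q(\cdot-r)$ for all such solutions, making Lemma \ref{male}, the compactness argument, and indeed the whole Liouville strategy vacuous (and contradicting the existence of nontrivial perturbations). The step that fails is the reading of \eqref{z3.6} as uniform-in-time exponential spatial decay of $\varepsilon$: uniformly in $t$ one only controls the unweighted $H^1$ distance through \eqref{z3.2}, and radiation escaping to spatial infinity is exactly what an exponentially growing weight $\alpha$ cannot see past; using the weighted smallness of $\varepsilon(\cdot,t)$ at arbitrarily large $|x|$ and arbitrarily large $t$ to bound $\int_{|x|>A_0}m^{-\frac1b}(x+\rho(t),t)\,\mathrm{d}x$ begs the question that Lemma \ref{male} is meant to answer.

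What the paper does instead is use only the difference (almost-monotonicity) estimate of Lemma \ref{uvm1}, i.e.\ \eqref{c4.4}: the quantity at time $t$ with weight $\psi_L(x-\rho(t)-x_0)$ is compared with the quantity at time $0$ carrying the transported weight, and the time-$0$ quantity is then estimated directly by splitting $x\ge R_0$ and $x<R_0$: on $x\ge R_0$ one uses the $\mathcal{Z}$-closeness of the \emph{initial data} $m_0$ to $Q$ (so the tail $\int_{x\ge R_0}m_0^{-\frac1b}\,\mathrm{d}x$ is small, \eqref{rm1}) together with the conservation structure, and on the bounded region one uses the smallness of the transported weight for large $t$. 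This is why the conclusion of Lemma \ref{male} is a $\limsup_{t\to\infty}$ and not, as you assert, a bound uniform in time; the claimed uniformity is another symptom that your route imports information (localization of $m$ for all times) that is not available at this stage. To repair your proof you would have to replace step 1 by the paper's mechanism: transport the cut-off back to $t=0$ with the monotonicity functional and use the exponential smallness of the initial tails, rather than invoking \eqref{lcuv} and the interior bound $|I(t_0)|\lesssim e^{-x_0/L}$ from Lemma \ref{uvm1}'s proof.
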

\begin{proof}
  In view of Lemma \ref{uvm1}, we obtain that
  \begin{equation}\label{c4.4}
    I_{x_0,t}(t)-I_{x_0,t}(0)\lesssim e^{-x_0/L},~~J_{x_0,t}(t)-J_{x_0,t}(0)\lesssim e^{-x_0/L}.
  \end{equation}
   Next, we consider $I_{x_0,t}(0).$ We distinguish now two cases: $x\geq R_0$ and $x<R_0,$ where $R_0$ is a positive constant. On the one hand, relying on the $F_2-$conservation of $m$ and the properties of $\psi_L$, we get that
   \begin{align}\label{rm1}
    \int_{x\geq R_0}m^{-\frac1b}\psi_L(x-\rho(t)-4bt-x_0)\,\mathrm{d}x\leq C\int_{x\geq R_0}m_0^{-\frac{1}{b}}\,\mathrm{d}x\leq \epsilon.
   \end{align}
    On the other hand, for the case $x<R_0,$ we derive from \eqref{z3.7} that
  \begin{align}\label{rm2}
    \int m^{-\frac{1}{b}}\psi_L(x_1)\,\mathrm{d}x\leq\psi_L\left(R_0-\rho(t)-4bt-x_0\right)
  \int m^{-\frac{1}{b}}\,\mathrm{d}x\leq e^{-x_0/L},
  \end{align}
which together with \eqref{rm1} yields $\lim\sup_{t\to\infty}I_{x_0,t}(0)\lesssim e^{-x_0/L}.$ Thus, relying on the $F_2-$conservation of $m,$  the properties of $\psi_L$ and  \eqref{z3.2}, we have
\begin{align}\label{rm3}
\int m^{2}\psi_L(x-\rho(t)-x_0)\,\mathrm{d}x=& \int m^{-\frac{1}{b}}m^{\frac{1}{b}+2}\psi_L(x-\rho(t)-x_0)\,\mathrm{d}x\nonumber\\
    \leq&\left(\|m-\gamma Q\|_{L^{\infty}}+\|\gamma Q\|_{L^{\infty}}\right)^{\frac1b+2}\int m^{-\frac{1}{b}}\psi_L(x-\rho(t)-x_0)\,\mathrm{d}x\nonumber\\
    \leq& K_0 e^{-x_0/L},
\end{align}
    for $\frac1b+2>0.$
Similarly, we derive that  $\limsup_{t\to\infty}J_{x_0,t}(0)\lesssim e^{-x_0/L}$ for $x\in\mathbb{R}.$
We deduce from \eqref{c4.4} and \eqref{rm3} that \eqref{ma1} and \eqref{ma2} hold, which are desired results in Lemma \ref{male}.
\end{proof}

To establish the proof of asymptotic stability, our primary issue lies in demonstrating the weak convergence of the solution sequence. Thus, we construct the following limit object. In the rest of this section, {\bf we write $\mathfrak{m}(x,t)$, $\mathfrak{m}^*(x,t)$ and  $\mathfrak{m}_0^*(x)$ to denote $ m^{-\frac{1}{2b}}(x,t),$ $(m^*)^{-\frac{1}{2b}}(x,t)$ and $(m_0^*)^{-\frac{1}{2b}}(x)$, respectively}.
\begin{lemma} \label {lem-1}
For $A_0>0,$ we have
  \begin{equation}\label{cgl2}
   \mathfrak{m}(\cdot+\rho(t_n),t_n)\to \mathfrak{m}_0^*~~\text{in}~~L^2(x>-A_0),
  \end{equation}
  as $n\to\infty.$
\end{lemma}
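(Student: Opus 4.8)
The plan is to localize: split $\{x>-A_0\}$ into the compact window $\{-A_0<x<R\}$, where the claim will follow from the weak convergence \eqref{mb2} after a Rellich-type upgrade, and the tail $\{x>R\}$, where uniform smallness will be supplied by the monotonicity estimate \eqref{ma2} of Lemma \ref{male}. It is convenient to set $\sigma:=-\tfrac{1}{2b}$, which lies in $(0,\tfrac12)$ since $b<-1$, so that $\mathfrak{m}=m^{\sigma}$ and $\mathfrak{m}^{2}=m^{-1/b}$, and to recall the elementary sub-additivity inequality $|a^{\sigma}-c^{\sigma}|\le|a-c|^{\sigma}$ valid for $a,c\ge0$ and $\sigma\in(0,1)$.

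\textbf{Local convergence.} First I would upgrade \eqref{mb2}: since $\{m(\cdot+\rho(t_n),t_n)\}_n$ is bounded in $H^{1}(\mathbb{R})$, it is uniformly bounded and equicontinuous on every compact interval (being uniformly bounded in $C^{1/2}$), so Arzel\`a--Ascoli together with uniqueness of the weak limit in \eqref{mb2} forces $m(\cdot+\rho(t_n),t_n)\to m_0^{*}$ uniformly on compact sets, hence pointwise on $\mathbb{R}$; in particular $m_0^{*}\ge0$. Applying the sub-additivity inequality pointwise then gives, for any fixed $R>0$,
\begin{equation*}
\|\mathfrak{m}(\cdot+\rho(t_n),t_n)-\mathfrak{m}_0^{*}\|_{L^{2}(-A_0<x<R)}^{2}\le (R+A_0)\,\|m(\cdot+\rho(t_n),t_n)-m_0^{*}\|_{L^{\infty}(-A_0<x<R)}^{2\sigma}\longrightarrow 0\quad(n\to\infty).
\end{equation*}
Note that this step needs no lower bound on $m_0^{*}$, only $m_0^{*}\ge0$.

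\textbf{Tail estimate.} Since $\psi_L$ is increasing with $\psi_L(0)=\tfrac12$, we have $\psi_L(x-R)\ge\tfrac12$ for all $x\ge R$. Taking $x_0=R$ in \eqref{ma2}, dropping the nonnegative $m_x^{2}$-term and using this lower bound,
\begin{equation*}
\limsup_{n\to\infty}\int_{x>R}\mathfrak{m}^{2}(x+\rho(t_n),t_n)\,\mathrm{d}x=\limsup_{n\to\infty}\int_{x>R}m^{-1/b}(x+\rho(t_n),t_n)\,\mathrm{d}x\lesssim e^{-R/L}.
\end{equation*}
By the pointwise convergence from the previous step and Fatou's lemma, $\int_{x>R}(\mathfrak{m}_0^{*})^{2}\,\mathrm{d}x\lesssim e^{-R/L}$ as well, whence $\|\mathfrak{m}(\cdot+\rho(t_n),t_n)-\mathfrak{m}_0^{*}\|_{L^{2}(x>R)}^{2}\lesssim e^{-R/L}$ for all $n$ large. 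To conclude \eqref{cgl2}, given $\delta>0$ I would first fix $R$ so large that the tail contribution is below $\delta$, and then take $n$ large so that the compact contribution is below $\delta$.

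I expect the tail estimate to be the only genuinely non-routine point: weak $H^{1}$-convergence carries no tightness as $x\to+\infty$, and the required uniform-in-$n$ smallness of $\int_{x>R}m^{-1/b}(x+\rho(t_n),t_n)\,\mathrm{d}x$ is supplied precisely by the backward-in-time monotonicity of Lemma \ref{male} (itself built on the $F_2$-conservation and the positivity of $m$). The local step is a standard compactness argument, streamlined by invoking H\"older rather than Lipschitz continuity of $s\mapsto s^{\sigma}$, which circumvents any need to bound $m_0^{*}$ away from zero.
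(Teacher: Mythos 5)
Your proof is correct and follows essentially the same route as the paper: the same splitting of $\{x>-A_0\}$ into a compact window and a right tail, with the tail controlled by the monotonicity estimate \eqref{ma2} of Lemma \ref{male} and the compact window handled by a compactness upgrade of the weak $H^1$ convergence \eqref{mb2}. You are in fact somewhat more explicit than the paper on two points it leaves implicit, namely transferring convergence from $m$ to $\mathfrak{m}=m^{-1/(2b)}$ via the inequality $|a^{\sigma}-c^{\sigma}|\le |a-c|^{\sigma}$, and bounding the tail of $\mathfrak{m}_0^*$ through pointwise convergence and Fatou's lemma rather than asserting it directly from Lemma \ref{male}.
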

\begin{proof}
  For the domain $x>-A_0,$ we distinguish two cases: $x\geq R_0$ and $-A_0<x<R_0$ for $R_0>0.$ In the case $x\geq R_0,$ relying on Lemma \ref{male}, we have that $\|\mathfrak{m}_0^*\|_{L^2(x\geq R_0)}\leq\epsilon$ and
  \begin{equation}\label{cgl21}
  \limsup_{n\to\infty}\left\| \mathfrak{m}(\cdot+\rho(t_n),t_n)\right\|_{L^2(x\geq R_0)}\leq\epsilon.
  \end{equation}
For the case $-A_0<x<R_0$, we define a domain $\mathcal{D}=\{x\in\mathbb{R}:-A_0<x<R_0\}.$ It is obvious that the embedding $\mathcal{Z}(\mathcal{D})\hookrightarrow L^2(\mathcal{D})$ is compact, and \eqref{iweak} gives that
 \begin{equation}\label{cgl22}
\lim_{n\to\infty}\left\| \mathfrak{m}(x+\rho(t_n),t_n)- \mathfrak{m}_0^*\right\|_{L^2(\mathcal{D})}\to0.
\end{equation}

Then, we get from gathering \eqref{cgl21}--\eqref{cgl22} that \eqref{cgl2} holds. This completes the proof of Lemma \ref{lem-1}.
\end{proof}

We now establish the exponential decay of $m^*$ on a finite time intervals.
\begin{lemma}\label{dmr}
  For any $t\geq0, t_0\geq0$ and fixed $L\geq4$, there exists a positive constant $K(t_0)$ such that
  \begin{align}\label{dmr1}
    \int\left((m_0^*)^2+(\partial_xm_0^*)^2\right)(x)\psi_L(x-x_0)\,\mathrm{d}x\lesssim e^{-x_0/L}
    \end{align}
and
\begin{align}\label{dmr2}
    \sup_{t\in[0,t_0]}\int\left((m^*)^2+(\partial_xm^*)^2\right)(x,t)e^{x/L}\,\mathrm{d}x\leq K(t_0).
    \end{align}
\end{lemma}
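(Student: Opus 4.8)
The plan is to deduce \eqref{dmr1} from weak lower semicontinuity together with Lemma \ref{male}, and to obtain \eqref{dmr2} by a forward-in-time weighted energy estimate for \eqref{bfe} applied to the limit solution $m^*$.

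For \eqref{dmr1}, observe that for fixed $x_0>0$ the functional $f\mapsto\int\bigl(f^2+f_x^2\bigr)\psi_L(x-x_0)\,\mathrm{d}x$ is convex, hence weakly lower semicontinuous on $H^1(\mathbb{R})$, since $\psi_L\geq0$ is bounded. Combining this with the weak convergence $m(\cdot+\rho(t_n),t_n)\rightharpoonup m_0^*$ in $H^1$ from \eqref{mb2}--\eqref{iweak} and estimate \eqref{ma1} of Lemma \ref{male}, I obtain
\[
\int\bigl((m_0^*)^2+(\partial_xm_0^*)^2\bigr)\psi_L(x-x_0)\,\mathrm{d}x\leq\liminf_{n\to\infty}\int\bigl(m^2+m_x^2\bigr)(x+\rho(t_n),t_n)\psi_L(x-x_0)\,\mathrm{d}x\lesssim e^{-x_0/L},
\]
which is \eqref{dmr1}. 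Letting $x_0\to\infty$ and using $\psi_L(y)\geq Ce^{y/L}$ for $y<0$ then upgrades this to the weighted bound $\int\bigl((m_0^*)^2+(\partial_xm_0^*)^2\bigr)e^{x/L}\,\mathrm{d}x\lesssim1$, which serves as the initial datum for the estimate of \eqref{dmr2}.

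For \eqref{dmr2}, recall that $m^*$ is the global solution of \eqref{bfe} with $\partial_tm^*=-u^*\partial_xm^*-bu^*_xm^*$, $u^*=p*m^*$, and that by the orbital stability bound \eqref{stbm} the norm $\|m^*(\cdot,t)\|_{H^1}$ --- hence $\|u^*(\cdot,t)\|_{W^{2,\infty}}$ --- is bounded uniformly in $t$. Let $\phi_n\in C^\infty(\mathbb{R})$ be bounded, nondecreasing approximations of $e^{x/L}$ with $\phi_n\uparrow e^{x/L}$, $0\leq\phi_n'\leq\tfrac1L\phi_n$ and $|\phi_n''|\lesssim\phi_n$. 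Testing the equation for $m^*$ against $m^*\phi_n$ and integrating the transport term by parts (moving the derivative onto $u^*\phi_n$, which contributes $\tfrac12\int u^*_x(m^*)^2\phi_n$ and $\tfrac12\int u^*(m^*)^2\phi_n'$), the bounds on $u^*,u^*_x$ and $\phi_n'$ give $\bigl|\tfrac{\mathrm{d}}{\mathrm{d}t}\int(m^*)^2\phi_n\,\mathrm{d}x\bigr|\lesssim\int(m^*)^2\phi_n\,\mathrm{d}x$. Differentiating \eqref{bfe} in $x$ and repeating the computation with $\partial_xm^*\phi_n$ (the only new term $-b\int u^*_{xx}m^*\partial_xm^*\phi_n$ being absorbed by Young's inequality into $\int\bigl((m^*)^2+(\partial_xm^*)^2\bigr)\phi_n$) yields the coupled inequality $\tfrac{\mathrm{d}}{\mathrm{d}t}\int\bigl((m^*)^2+(\partial_xm^*)^2\bigr)\phi_n\,\mathrm{d}x\lesssim\int\bigl((m^*)^2+(\partial_xm^*)^2\bigr)\phi_n\,\mathrm{d}x$. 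Gr\"onwall's lemma on $[0,t_0]$, followed by $n\to\infty$ via monotone convergence and the initial weighted bound on $m_0^*$, gives \eqref{dmr2} with $K(t_0)=Ce^{Ct_0}\int\bigl((m_0^*)^2+(\partial_xm_0^*)^2\bigr)e^{x/L}\,\mathrm{d}x$.

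The main point requiring care is the rigorous justification of the weighted energy identities: a priori $m^*\in C(\mathbb{R},H^1_\alpha)$ only, so all integrations by parts must be performed with the truncated weights $\phi_n$ --- for which every integral is finite and every boundary term vanishes --- and the passage $n\to\infty$ postponed to the very end; the uniform-in-time control of $\|u^*\|_{W^{2,\infty}}$ needed throughout is precisely what \eqref{stbm} and the embedding $H^1_\alpha\hookrightarrow H^1$ provide. The remaining computations are routine.
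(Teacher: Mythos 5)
Your proof is correct and follows essentially the same route as the paper: \eqref{dmr1} is obtained from weak lower semicontinuity of the weighted $H^1$ quantity together with \eqref{ma1} of Lemma \ref{male}, and \eqref{dmr2} from a weighted energy estimate for \eqref{bfe} using the uniform bounds on $u^*,u_x^*,u_{xx}^*$ coming from \eqref{stbm}, closed by Gr\"onwall. The only difference is cosmetic: the paper keeps the bounded weight $\psi_L(x-x_0)$ throughout and converts to the exponential weight at the end via $\psi_L(x-x_0)\geq e^{(x-x_0)/L}$, whereas you first upgrade \eqref{dmr1} to an $e^{x/L}$-weighted bound and then run the estimate with truncations $\phi_n\uparrow e^{x/L}$ and monotone convergence.
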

\begin{proof}
  In view of \eqref{mb2}, we get that
  \begin{align}\label{dmr3}
    m(\cdot+\rho(t_n), t_n)\sqrt{\psi_L(\cdot-x_0)}\rightharpoonup m_0^*\sqrt{\psi_L(\cdot-x_0)}\quad\text{in}\quad H^1(\mathbb{R}),
  \end{align}
  which leads to
  \begin{equation}\label{dmr4}
\left\|m_0^* \sqrt{\psi_L\left(\cdot-x_0\right)}\right\|_{H^1} \leq \liminf _{n \rightarrow+\infty}\left\|m\left(\cdot+\rho\left(t_n\right), t_n\right) \sqrt{\psi_L\left(\cdot-x_0\right)}\right\|_{H^1}.
\end{equation}
We deduce from \eqref{ma1} and \eqref{dmr4} that \eqref{dmr1} holds.

Next, we prove that \eqref{dmr2} also holds. For fixed $t_0>0$ and $x_0>0,$ integration by parts gives that
 \begin{align}\label{dmr5}
   \frac{\mathrm{d}}{\mathrm{d}t}\int (m^*)^2\psi_L(x-x_0)\,\mathrm{d}x=&2\int m^*\psi_L(x-x_0)\left(-u^*\partial_xm^*-b\partial_xu^*m^*\right)\,\mathrm{d}x\nonumber\\
   =&\int u^*(m^*)^2\psi_L'(x-x_0)\,\mathrm{d}x+(1-2b)\int \partial_xu^*(m^*)^2\psi_L(x-x_0)\,\mathrm{d}x.
 \end{align}
It reveals from \eqref{stbm} that
\begin{align}\label{dmr6}
  \left|\int u^*(m^*)^2\psi_L'(x-x_0)\,\mathrm{d}x\right|\leq& \left(\|u^*-\gamma q\|_{L^{\infty}}+|\gamma q|\right)\int (m^*)^2\psi_L'(x-x_0)\,\mathrm{d}x\nonumber\\
  \leq&K_0\int (m^*)^2\psi_L(x-x_0)\,\mathrm{d}x,
\end{align}
since $\psi_L'(y)\lesssim\psi_L(y)$ for $y\in\mathbb{R}.$ It remains to consider the last term on the right hand of \eqref{dmr5}. We derive from \eqref{stbm} that
\begin{align}\label{dmr7}
  \left|\int \partial_xu^*(m^*)^2\psi_L(x-x_0)\,\mathrm{d}x\right|\leq K_0\int (m^*)^2\psi_L(x-x_0)\,\mathrm{d}x.
\end{align}
Combining  \eqref{dmr6} and \eqref{dmr7}, we obtain that
\begin{align}\label{dmrin}
  \frac{\mathrm{d}}{\mathrm{d}t}\int (m^*)^2\psi_L(x-x_0)\,\mathrm{d}x\leq C\int (m^*)^2\psi_L(x-x_0)\,\mathrm{d}x.
\end{align}

In the following, one deduces from \eqref{bfe} and \eqref{stbm} that
\begin{align}\label{dmrx}
   \frac{\mathrm{d}}{\mathrm{d}t}\int (m_x^*)^2\psi_L(x-x_0)\,\mathrm{d}x=&2\int m_x^*\psi_L(x-x_0)\left(-u^*\partial_x^2m^*-b\partial_x^2u^*m^*-(b+1)\partial_xu^*\partial_xm^*\right)\,\mathrm{d}x\nonumber\\
   \leq&K_0\int (m_x^*)^2\psi_L(x-x_0)\,\mathrm{d}x,
 \end{align}
 which together with \eqref{dmrin} yield that
 \begin{align}\label{dmrx1}
   \frac{\mathrm{d}}{\mathrm{d}t}\int \left((m^*)^2+(m_x^*)^2\right)\psi_L(x-x_0)\,\mathrm{d}x
   \leq C\int \left((m^*)^2+(m_x^*)^2\right)\psi_L(x-x_0)\,\mathrm{d}x.
 \end{align}
 Hence, the Gronwall inequality and \eqref{dmr1} imply that
\begin{align}\label{dmrru}
    \sup_{t\in[0,t_0]}&\int\left((m^*)^2+(m_x^*)^2\right)(x,t)\psi_L(x-x_0)\,\mathrm{d}x\nonumber\\
    &\leq \int\left((m^*)^2+(m_x^*)^2\right)(x,0)\psi_L(x-x_0)\,\mathrm{d}x\lesssim e^{-x_0/L}.
    \end{align}
Since $\psi_L(x-x_0)\geq e^{(x-x_0)/L}$ for $x<x_0,$ we obtain
$$\sup_{t\in[0,t_0]}\int\left((m^*)^2+(m_x^*)^2\right)(x,t)e^{(x-x_0)/L}\,\mathrm{d}x\lesssim e^{-x_0/L},$$
which completes the proof of \eqref{dmr2}. The proof of Lemma \ref{dmr} is completed.
\end{proof}

In the following proposition, we establish the weak continuity of the $b-$family flow in a certain appropriate sense of weak convergence.

\begin{proposition}\label{thmwc}
  Given a positive sequence $\left(m_{n, 0}\right)_{n \in \mathbb{N}}$ and a positive function $m_0$ such that
$
m_{n, 0} \rightharpoonup m_0
$
in $H^1(\mathbb{R})$ as $n \rightarrow+\infty$. Let $m_n$ be the unique global solutions to the b-family equation with initial data $m_{n, 0}$, and let $m$ be the unique global solutions to the b-family equation with initial data $m_0$. Then we have, for any $t>0,$
$$
m_n(\cdot, t) \rightharpoonup m(\cdot, t)~~\text { in }~~H^1(\mathbb{R}),
$$
as $n \rightarrow+\infty$.
\end{proposition}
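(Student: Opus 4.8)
The plan is to prove the proposition by a compactness-and-uniqueness argument, in the spirit of the weak-continuity lemmas underlying the Martel--Merle method \cite{MM3, CLLW}. First I would record the a priori bounds. Since $m_{n,0}\rightharpoonup m_0$ in $H^1(\mathbb{R})$, the sequence $\|m_{n,0}\|_{H^1}$ is bounded; combining this with the preservation of positivity, the conserved quantities in \eqref{enc}, and the global well-posedness theory, the solutions $m_n$ are bounded in $C([0,T];H^1(\mathbb{R}))$ uniformly in $n$, for every $T>0$. Consequently $u_n:=(1-\partial_x^2)^{-1}m_n$ is bounded in $C([0,T];H^3(\mathbb{R}))$, and reading off $\partial_t m_n=-u_nm_{n,x}-bu_{n,x}m_n$ from \eqref{bfe} shows that $\partial_t m_n$ is bounded in $C([0,T];L^2(\mathbb{R}))$, hence $\partial_t u_n$ is bounded in $C([0,T];H^2(\mathbb{R}))$.

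Next I would extract a limit. By the Aubin--Lions--Simon lemma applied on each bounded spatial window, the family $\{u_n\}$ is relatively compact in $C([0,T];H^2_{\mathrm{loc}}(\mathbb{R}))$; together with the uniform $H^3$ bound this lets me pass to a subsequence (not relabelled) with $u_n\to u_\infty$ in $C([0,T];H^2_{\mathrm{loc}})$ and, for each $t\in[0,T]$, $m_n(\cdot,t)\rightharpoonup m_\infty(\cdot,t):=(1-\partial_x^2)u_\infty(\cdot,t)$ weakly in $H^1(\mathbb{R})$. In particular $m_\infty(\cdot,0)=m_0$ because $m_{n,0}\rightharpoonup m_0$. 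It then remains to identify $m_\infty$ with $m$.

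To do so I would pass to the limit in the conservation-law form $m_t+\big(\tfrac{b-1}{2}(u^2-u_x^2)+um\big)_x=0$, tested against $\varphi\in C_c^\infty(\mathbb{R}\times[0,T))$. The point of using this formulation is that it contains no derivative of $m$: the nonlinearity involves only $u_n,u_{n,x}$, which converge strongly in $C([0,T];L^\infty_{\mathrm{loc}})$ so that $u_n^2-u_{n,x}^2\to u_\infty^2-u_{\infty,x}^2$ strongly, and the bilinear product $u_nm_n$, which is a strong-times-weak product and hence satisfies $u_nm_n\rightharpoonup u_\infty m_\infty$. Passing to the limit shows that $m_\infty$ is a solution of \eqref{bfe} on $[0,T]$ with initial data $m_0$, lying in the uniqueness class of the well-posedness theory; by uniqueness, $m_\infty=m$ on $[0,T]$. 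Since $T>0$ was arbitrary and the weak limit is independent of the extracted subsequence, the whole sequence satisfies $m_n(\cdot,t)\rightharpoonup m(\cdot,t)$ in $H^1(\mathbb{R})$ for every $t>0$.

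The main obstacle is the strong local compactness of $\{u_n\}$ needed to handle the quadratic terms $u_n^2-u_{n,x}^2$ and the bilinear term $u_nm_n$: this is precisely where the smoothing of the Helmholtz inverse is essential, since it upgrades the $H^1$ bound on $m_n$ to an $H^3$ bound on $u_n$ and the $L^2$ bound on $\partial_t m_n$ to an $H^2$ bound on $\partial_t u_n$, so that Aubin--Lions applies; one must also be careful that the equicontinuity in time holds only in a norm weaker than the one giving the uniform bound. A secondary technical point is to ensure the uniform-in-$n$ bound on $m_n$ genuinely persists on every compact time interval, which is where the positivity of $m_{n,0}$ and the finiteness of $F_2$ — the same ingredients as in the global well-posedness statement \cite{HL} — are used.
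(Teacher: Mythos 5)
Your argument is correct and rests on the same compactness-and-uniqueness skeleton as the paper's proof: uniform bounds on $[0,T]$, extraction of a limit, identification of the limit as a solution emanating from $m_0$, and conclusion by uniqueness together with subsequence-independence. The implementation differs in two places. The paper obtains the uniform bound through the Gronwall-type estimate \eqref{a1} on $\frac{\mathrm{d}}{\mathrm{d}t}\|m_n\|_{H^1}^2$ and then gets strong compactness of $m_n$ itself, combining Rellich--Kondrachov in space with the $L^2$ bound on $\partial_t m_n$ and Arzel\`a--Ascoli in time, before identifying the limit directly from the transport form of \eqref{bfe}; you instead transfer the compactness to $u_n=(1-\partial_x^2)^{-1}m_n$ via Aubin--Lions--Simon and pass to the limit in the conservation-law (divergence) form, so that the only term involving $m_n$ is the strong-times-weak product $u_nm_n$ and $\partial_x m_n$ never appears in the nonlinearity. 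The two routes are essentially equivalent in strength; yours is slightly cleaner on the limit identification, while the paper's yields strong local $L^2$ convergence of $m_n(\cdot,t)$ itself. One point in your favour: the constant in the paper's inequality \eqref{a1} involves $\|u_n\|_{W^{1,\infty}}$, and making it uniform in $n$ and in time is exactly where the positivity of $m_n$ and the conserved quantity $\int m_n\,\mathrm{d}x$ enter; your proposal names these ingredients explicitly, whereas the paper leaves the dependence implicit. Both arguments share the same mild looseness at the very end, namely checking that the distributional limit lies in the class where uniqueness of the Cauchy problem applies; your appeal to the well-posedness theory is at the same level of detail as the paper's.
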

\begin{proof}
  In view of \eqref{bfe} and integration by parts, there exists a positive constant $C$ such that
  \begin{align}\label{a1}
   \frac{\mathrm{d}}{\mathrm{d}t}\left\|m_n\right\|_{H^1}^2=&(1-b)\int \partial_xu_nm_n^2\,\mathrm{d}x+(-1-2b)\int \partial_xu_n(\partial_xm_n)^2\,\mathrm{d}x\nonumber\\
   \leq&C\left\|m_n\right\|_{H^1}^2.
  \end{align}
where $u_n=(1-\partial_x^2)^{-1}m_n.$ We deduce from Gronwall inequality that $\left\|m_n\right\|_{H^1}$ remains bounded for all $t > 0$.
Moreover, the sequence $\{\partial_xm_n\}$ is bounded in $L^2(\mathbb{R}).$ Therefore, there exists $\tilde{m}\in C(\mathbb{R};H^1(\mathbb{R}))$ and a subsequence $\{n_k\}_{k\geq1}$ such that for any $T>0,$
\begin{align}\label{a2}
  m_{n_k}\rightharpoonup\tilde{m}\in C(\mathbb{R}_+;H^1(\mathbb{R}))~~\text{as}~~n\to+\infty.
  \end{align}
  Next, we check that $\tilde{m}$ is a solution of the $b-$family equation. In view of the boundness of $m_n$ and $\partial_xm_n,$ and applying Rellich-Kondrachov theorem, we have that the sequence $\{m_n\}$ are relatively compact in $L^2(\mathbb{R})$ for any $t\in[0,T].$ In addition, for the solution sequence $m_n,$ its time derivative $\partial_tm_n\in C([0,T];L^2(\mathbb{R}))$ satisfies
  \begin{align*}
    \LN\partial_tm_n\RN_{L^2}\leq&\LN u_n\RN_{L^{\infty}}\LN\partial_xm_n\RN_{L^2}+\LN \partial_xu_n\RN_{L^{\infty}}\LN m_n\RN_{L^2}\\
    \leq&\LN m_n\RN_{H^1}\leq C.
  \end{align*}
  As a consequence, $m_n$ are equicontinuous in $C([0,T];L^2(\mathbb{R}))$. Invoking the compactness argument and the Arzela-Ascoli theorem, there exists a subsequence $\{n_k\}$ such that
  \begin{align}\label{a3}
  m_{n_k}\rightharpoonup\tilde{m}_1\in C(\mathbb{R}_+;L^2(\mathbb{R}))~~\text{as}~~n\to+\infty,
  \end{align}
and $\tilde{m}_1$ also is a solution to the $b-$family equation. The uniqueness of the Cauchy probelm in $H^1(\mathbb{R})$ ensures that $\tilde{m}_1=\tilde{m}.$
\end{proof}

\begin{lemma}\label{zlem4.3}
  For a positive constant $A_0>0,$ we have that
\begin{equation}\label{wh1}
  \mathfrak{m}(\cdot+\rho(t_n),t_n+t)\rightharpoonup \mathfrak{m}^*(\cdot,t)~~\text{in}~~L^2(\mathbb{R}),
\end{equation}
\begin{equation}\label{sl2}
  \mathfrak{m}(\cdot+\rho(t_n),t_n+t)\to \mathfrak{m}^*(\cdot,t)~~\text{in}~~L^2(x>-A_0)
\end{equation}
and
\begin{equation}\label{sl2r}
  \rho(t_n+t)-\rho(t_n)\to \rho^*(t),
\end{equation}
for $t\in\mathbb{R},$ and $\rho^*(t)$ is the modulation parameter corresponding to the decomposition of $m^*$ stated in Lemma \ref{zlem3.1}.
\end{lemma}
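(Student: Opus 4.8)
The plan is to derive all three conclusions from the weak $H^1$ convergence \eqref{mb2}, first propagating it in time by the weak continuity of the flow (Proposition~\ref{thmwc}), and then transferring the information to $\mathfrak m=m^{-1/(2b)}$ by combining local compactness with the tightness provided by the monotonicity estimates of Lemma~\ref{male} and by the conservation of $F_2$.

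Concretely, I would set $m_n(x,s):=m(x+\rho(t_n),t_n+s)$, which by translation invariance of \eqref{bfe} is the global solution with data $m_n(\cdot,0)=m(\cdot+\rho(t_n),t_n)\rightharpoonup m_0^*$ in $H^1(\mathbb R)$; the argument of Proposition~\ref{thmwc} (which applies equally for $t<0$, the flow being globally defined) then gives, for every $t\in\mathbb R$, $m(\cdot+\rho(t_n),t_n+t)\rightharpoonup m^*(\cdot,t)$ in $H^1(\mathbb R)$, where $m^*$ is the solution with data $m_0^*$. By Rellich--Kondrachov this convergence is strong in $L^2_{\mathrm{loc}}$ and locally uniform; since $z\mapsto z^{-1/(2b)}$ is continuous on $[0,\infty)$ (here $-1/(2b)\in(0,\tfrac12)$ because $b<-1$), it follows that $\mathfrak m(\cdot+\rho(t_n),t_n+t)\to\mathfrak m^*(\cdot,t)$ locally uniformly, hence in $L^2(K)$ for every compact $K$.

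For \eqref{sl2} I would add tightness on the right. Since $|\rho'|\lesssim\|\varepsilon\|_{H^1}\lesssim\epsilon$ by \eqref{z3.6} and \eqref{z3.7}, the shift $\rho(t_n+t)-\rho(t_n)$ lies in a fixed bounded interval for $t$ fixed, so Lemma~\ref{male} (after absorbing this bounded shift into $x_0$) yields, for some $c>0$ depending on $L$,
\begin{equation*}
\limsup_{n\to\infty}\int_{x>R}\mathfrak m^2(x+\rho(t_n),t_n+t)\,\mathrm{d}x\lesssim e^{-cR},
\end{equation*}
and, by Fatou, the same tail bound for $\mathfrak m^*(\cdot,t)$. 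Splitting $\{x>-A_0\}$ into $\{-A_0<x<R\}$, controlled by the local strong convergence above, and $\{x>R\}$, controlled by the tail bound, and then letting first $n\to\infty$ and then $R\to\infty$, gives \eqref{sl2}; this is exactly the argument of Lemma~\ref{lem-1} run at times $t_n+t$. For \eqref{wh1}, the identity $\int\mathfrak m^2(\cdot,s)\,\mathrm{d}x=\int m^{-1/b}(\cdot,s)\,\mathrm{d}x\le F_2<\infty$ together with conservation of $F_2$ shows that $\mathfrak m(\cdot+\rho(t_n),t_n+t)$ is bounded in $L^2(\mathbb R)$ and that $\mathfrak m^*(\cdot,t)\in L^2(\mathbb R)$; hence every subsequence has a weakly $L^2(\mathbb R)$-convergent sub-subsequence whose limit, by \eqref{sl2}, agrees with $\mathfrak m^*(\cdot,t)$ on each half-line $\{x>-A_0\}$ and therefore on $\mathbb R$, and uniqueness of the limit yields \eqref{wh1}.

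Finally, for \eqref{sl2r} I would use the translation covariance of the modulation map $y_1$ of Lemma~\ref{zlem3.1}: the uniquely defined small number $\rho_n(t):=\rho(t_n+t)-\rho(t_n)$ is precisely the modulation parameter of $m_n(\cdot,t)$, so $m_n(\cdot+\rho_n(t),t)-\gamma Q$ satisfies \eqref{voc} and $|\rho_n(t)|\lesssim\epsilon|t|$. Given a subsequence, extract a further one with $\rho_n(t)\to\bar\rho$; then $m_n(\cdot+\rho_n(t),t)\rightharpoonup m^*(\cdot+\bar\rho,t)$ in $H^1$, since weak $H^1$ convergence is stable under translation by converging amounts. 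Passing to the limit in \eqref{voc}, using that the two orthogonality functionals pair the error against fixed exponentially localized profiles built from $Q$ and are continuous along this convergence (the uniform bound \eqref{z3.6} handling the weighted pairing), shows that $m^*(\cdot+\bar\rho,t)-\gamma Q$ satisfies \eqref{voc}; by the local uniqueness of the modulation parameter, $\bar\rho=\rho^*(t)$, and since every subsequence produces this same limit, $\rho_n(t)\to\rho^*(t)$. The step I expect to be most delicate is the left tail $\{x<-A_0\}$: Lemma~\ref{male} controls $\mathfrak m$ only as $x\to+\infty$, so there is no tightness there and $L^2(\mathbb R)$ convergence can only be weak, which forces one to balance the one-sided monotonicity decay against the global $L^2$ bound from $F_2$-conservation; a secondary but genuine point is that the Hölder (non-Lipschitz) dependence of $\mathfrak m$ on $m$ near $m=0$ requires working with locally uniform rather than $L^2$ estimates in the first step.
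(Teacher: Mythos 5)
Your proposal is correct in substance, but for \eqref{wh1} and \eqref{sl2} it takes a genuinely different route from the paper. The paper first proves \eqref{sl2} by a weighted Gronwall argument: it writes the evolution equation for the difference $\omega_n=\mathfrak m(\cdot+\rho(t_n),t_n+t)-\mathfrak m^*$, estimates $\frac{\mathrm d}{\mathrm dt}\int\omega_n^2\psi_L\,\mathrm dx\lesssim\int\omega_n^2\psi_L\,\mathrm dx$ (using \eqref{z3.6} and the finite-time exponential decay of $m^*$ from Lemma \ref{dmr}), and propagates the $t=0$ smallness supplied by Lemma \ref{lem-1}; negative times are treated by extracting a weak limit at time $t_n+t_1$, evolving it forward and invoking uniqueness, and \eqref{wh1} is then obtained through a separate compactness/Arzel\`a--Ascoli identification of the weak limit as a distributional solution. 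You instead get the convergence at each fixed time directly from the weak continuity of the flow (Proposition \ref{thmwc}) plus the compact embedding of $H^1$ into $C_{\mathrm{loc}}$, transfer it to $\mathfrak m$ through the H\"older-continuous map $z\mapsto z^{-1/(2b)}$ (correctly handled via locally uniform convergence), and upgrade to $L^2(x>-A_0)$ by the right-tail tightness of Lemma \ref{male} applied at times $t_n+t$ with the bounded shift $\rho(t_n+t)-\rho(t_n)$ absorbed; \eqref{wh1} then follows almost for free from the $F_2$-bound and uniqueness of weak limits. Your route is softer (no equation for $\omega_n$, no use of Lemma \ref{dmr} here), while the paper's Gronwall estimate is quantitative on bounded time intervals and is reused almost verbatim in Lemma \ref{cwm}. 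For \eqref{sl2r} your argument coincides with the paper's (subsequence of the bounded shifts, translation-stability of weak convergence, passage to the limit in \eqref{voc}, uniqueness of the modulation parameter), and your remark that the uniform bound \eqref{z3.6} is what makes the weighted pairings pass to the limit is exactly the point the paper leaves implicit, though note that $Q'\alpha$ and $F_2'(Q)\alpha$ are exponentially growing rather than localized profiles — it is the integrability of $(Q')^2\alpha$ against the uniform $H^1_\alpha$ bound that saves the tail.

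One point you should justify rather than assert: Proposition \ref{thmwc} is stated only for $t>0$, and its proof as written gives a one-sided Gronwall bound, so the backward-in-time weak continuity you invoke for $t<0$ needs a sentence — either use the invariance of \eqref{bfe} under $(x,t)\mapsto(-x,-t)$, or observe that in the present application the solutions are uniformly bounded in $H^1$ for all times by orbital stability \eqref{mb}, so the same compactness/uniqueness argument runs backward. With that line added, your proof is complete; the paper circumvents this by its subsequence-plus-forward-uniqueness device.
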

\begin{proof}
Let us define $\omega_n(x,t)=\mathfrak{m}(x+\rho(t_n),t_n+t)-\mathfrak{m}^*(x,t).$ It follows from \eqref{cgl2} that
  \begin{equation}\label{sl3}
    \int\omega_n^2(x,0)\psi_L(x)\,\mathrm{d}x\to0,
  \end{equation}
as $n\to\infty.$
  Moreover, we now compute \eqref{bfe}, and thereby one obtains that
  \begin{equation}\label{sl4}
    \partial_t\omega_n=-u_n\partial_x\omega_n-\partial_x\mathfrak{m}^*\left(1-\partial_x^2\right)^{-1}\tilde{\omega}_n+\frac12\partial_xu_n\omega_n
    +\frac12\mathfrak{m}^*\left(1-\partial_x^2\right)^{-1}\partial_x\tilde{\omega}_n,
  \end{equation}
where $u_n=(1-\partial_x^2)^{-1}m(x+\rho(t_n),t_n+t),~\tilde{\omega}_n=m(x+\rho(t_n),t_n+t)-m^*(x,t).$
It reveals from \eqref{sl4} that
\begin{align}\label{sl8}
  \frac{\mathrm{d}}{\mathrm{d}t}\int \omega_{n}^2\psi_L\,\mathrm{d}x=&2\int\omega_n\partial_t\omega_n\psi_L\,\mathrm{d}x\nonumber\\
  =&2\int\omega_n\psi_L\bigg(-u_n\partial_x\omega_n-\partial_x\mathfrak{m}^*\left(1-\partial_x^2\right)^{-1}\tilde{\omega}_n\nonumber\\
  &+\frac12\partial_xu_n\omega_n
    +\frac12\mathfrak{m}^*\left(1-\partial_x^2\right)^{-1}\partial_x\tilde{\omega}_n\bigg)\,\mathrm{d}x,
\end{align}
where $\psi_L$ is defined in \eqref{tf} and $L\geq4.$ We deduce from \eqref{z3.6} in Lemma \ref{zlem3.1} that
\begin{align}\label{sl9}
 \left|\int -u_n\omega_n\partial_x\omega_{n}\psi_L\,\mathrm{d}x\right|=&\frac12\left|\int u_n\omega_n^2\psi_L'\,\mathrm{d}x+\int \partial_xu_n\omega_n^2\psi_L\,\mathrm{d}x\right|\nonumber\\
 \lesssim& \left(\|u_n-\gamma q\|_{L^{\infty}}+|\gamma q|+\|\partial_xu_n-\gamma q'\|_{L^{\infty}}+|\gamma q'|\right)\int \omega_{n}^2\psi_L'\,\mathrm{d}x.
\end{align}
 Relying on the \eqref{z3.6}, the Cauchy-Schwarz inequality and Sobolev embedding theorem, we obtain that
\begin{align}\label{sl10}
 &\left|\int \partial_xu_n\omega_{n}^2\psi_L\,\mathrm{d}x+\int \mathfrak{m}^*\omega_{n}(1-\partial_x^2)^{-1}\partial_x\tilde{\omega}_n\psi_L\,\mathrm{d}x\right|\nonumber\\
 &\lesssim\left(\|\partial_xu_n-\gamma q'\|_{L^{\infty}}+|\gamma q'|\right)\left\|\omega_n\sqrt{\psi_L}\right\|_{L^2}+
\left(\|m^*-\gamma Q\|_{L^{\infty}}^{-\frac{1}{b}}+|\gamma Q|^{-\frac{1}{b}}\right)\left\|\omega_n\sqrt{\psi_L}\right\|_{L^2}
 \left\|\partial_xv_n\sqrt{\psi_L}\right\|_{L^2}\nonumber\\
 &\lesssim\int\omega_n^2\psi_L\,\mathrm{d}x.
\end{align}
 where $\|\partial_xv_n\sqrt{\psi_L}\|_{L^2}\lesssim\|\tilde{\omega}_n\sqrt{\psi_L}\|_{L^2}\lesssim\left\|\omega_n\sqrt{\psi_L}\right\|_{L^2}.$
 For the remaining term in \eqref{sl8}, we deduce from Lemma \ref{dmr}, and \eqref{z3.6}, the Cauchy-Schwarz inequality and the decay properties of the lefton that
\begin{align}\label{sl11}
 \left|\int \partial_x\mathfrak{m}^*\omega_{n}(1-\partial_x^2)^{-1}\tilde{\omega}_n\psi_L\,\mathrm{d}x\right|
 &\lesssim\left\|\partial_x\mathfrak{m}^*\sqrt{\psi_L}\right\|_{L^2}\left\|(1-\partial_x^2)^{-1}\tilde{\omega}_n\right\|_{L^{\infty}}\left\|\omega_n\sqrt{\psi_L}\right\|_{L^2}\nonumber\\
 &\lesssim K_0\epsilon\left\|\partial_xm^*\sqrt{\psi_L}\right\|_{L^2}\int\omega_n^2\psi_L\,\mathrm{d}x\nonumber\\
 &\lesssim\int\omega_n^2\psi_L\,\mathrm{d}x,
\end{align}
where we use the inequality $(f+g)^{-\frac1b-2}\lesssim f^{-\frac1b-2}+g^{-\frac1b-2}$ for $b<-1$ and $f,g>0.$

Gathering \eqref{psi} and \eqref{sl8}--\eqref{sl11} , we get that
\begin{align}\label{sl12}
  \frac{\mathrm{d}}{\mathrm{d}t}\int \omega_{n}^2\psi_L\,\mathrm{d}x\lesssim\int \omega_{n}^2\psi_L\,\mathrm{d}x,
\end{align}
which implies that
\begin{equation}\label{sl6}
  \sup_{t\in[0,t_0]}\int \omega_{n}^2(x,t)\psi_L(x)\,\mathrm{d}x\leq C\int \omega_{n}^2(x,0)\psi_L(x)\,\mathrm{d}x
\end{equation}
holds for any $t_0>0$ by employing the Gronwall inequality.
We deduce from  \eqref{sl3} and \eqref{sl6} that \eqref{sl2} holds for $t\geq0.$

Now, it is sufficient to prove that \eqref{sl2} holds for $t<0.$ For any $t_1<0,$ one deduces from the boundness of $m(\cdot+\rho(t_n),t_n+t_1)$ in $H^1(\mathbb{R})$ that there exists a subsequence $\{t_{n_k}\}$ and $\tilde{m}_0\in H^1(\mathbb{R})$ such that
$m(\cdot+\rho(t_{n_k}),t_{n_k}+t_1)\rightharpoonup\tilde{m}_0$
in $H^1$ as $n_k\to\infty.$
Invoking the well-posedness arguments, one gives a solution $\tilde{m}(x,t)\in C(\mathbb{R};\mathcal{Z}(\mathbb{R}))$ of \eqref{bfe} with the initial data $\tilde{m}_0$.
Arguing as the proof of \eqref{sl2} for $t\geq0,$ we obtain that, for any $A_0>0$ and $t\geq0,$
\begin{equation}\label{sl14}
  \mathfrak{m}(\cdot+\rho(t_{n}),t_{n}+t_1+t)\to\tilde{\mathfrak{m}}(\cdot,t)~~\text{in}~~L^2(x>-A_0),
\end{equation}
 where $\tilde{\mathfrak{m}}=\tilde{m}^{-\frac1b}.$
 Next, setting $t=-t_1$ in \eqref{sl14} and combining it with the fact that $ m(\cdot+\rho(t_{n}),t_{n})\rightharpoonup m_0^*(\cdot,t)$ in $H^1$, we obtain that $\tilde{m}(\cdot,-t_1)=m_0^*.$ The uniqueness reveals that $\tilde{\mathfrak{m}}(\cdot,t-t_1)=\mathfrak{m}^*(\cdot,t)$, which implies that \eqref{sl2} holds for $t\geq t_1.$ Thus, we conclude the proof of \eqref{sl2} for any $t\in\mathbb{R}.$

The weak convergence in $L^2$ in \eqref{wh1} for any $t\geq0$ follows from the uniqueness.
Indeed, it is clear that $\mathfrak{m}_n(x,t)=\mathfrak{m}(\cdot+\rho(t_n),t_n+t)$ is bounded in $L^2$, then there exists a function $\mathfrak{m}_0^*\in L^2$ such that $\mathfrak{m}_n(\cdot,0)\rightharpoonup \mathfrak{m}_0^*$ in $L^2$. Invoking the well-posedness arguments, we know that $m_n(x,t)$ is the solution of \eqref{bfe} with the initial data $m_n(x,0)$ and $m^*(x,t)$ corresponds to the initial data $m_0^*$. There exists a subsequence $\{t_{n_k}\}$
such that $m_{t_{n_k}}\rightharpoonup U_1$ in  $\mathcal{Z}$ and $\mathfrak{m}_{t_{n_k}}\rightharpoonup U_1^{-1/b}$ in $L^2$ as $n_k\to\infty$. In addition, $\partial_t m_n(x,t)$ is bounded in $L^2.$
By employing the compactness argument and the Arzela-Ascoli theorem, we know that $m_{t_{n_k}}\rightharpoonup U_1$ in  $H^s$ for $s<1,$ which ensure that $U_1$ is a solution of \eqref{bfe} in the sense of distributions with the initial data $m_0^*$. By the uniqueness of the Cauchy problem in $\mathcal{Z}$, we know that
$U_1=m^*$ and \eqref{wh1} holds. In addition, the proof of \eqref{wh1} is valid for any $t<0$, just as it is for \eqref{sl2}.

It remains to check the convergence results of $\rho(t)$ in \eqref{sl2r}.  For any $t\in\mathbb{R},$ there exists a subsequence such that
\begin{equation}\label{pc}
\rho\left(t_n+t\right)-\rho\left(t_n\right) \to \tilde{\rho}(t),
\end{equation}
as $n\to\infty.$  By continuity for the weak sequence and taking the limit $n\to\infty$, we have
$$
\begin{aligned}
m\left(\cdot+\rho\left(t_n+t\right), t_n+t\right) \rightharpoonup m^*(\cdot+\tilde{\rho}(t), t) \quad \text { in } H^1(\mathbb{R}),
\end{aligned}
$$
which yields that
$$
\begin{aligned}
\varepsilon_n\left(\cdot, t\right)=&m(\cdot+\rho(t_n+t), t+t_n)-\gamma Q   \\ &\rightharpoonup\tilde{\varepsilon}(\cdot, t):=m^*(\cdot+\tilde{\rho}(t), t)-\gamma Q\quad \text { in } H^1(\mathbb{R}).
\end{aligned}
$$
Moreover, $\varepsilon_n(\cdot,t)$ satisfies the orthogonality conditions
\begin{align*}
 \left(\varepsilon_n,Q\right)_{\alpha}=\left(\varepsilon_n,F_2'(Q)\right)_{\alpha}=0.
\end{align*}
By passing to the limit as $n\to\infty,$ we obtain that the orthogonality conditions \eqref{voc} also hold for $\tilde{\varepsilon}$.
It sufficient to prove that the accumulation points of the sequences $ \rho\left(t_n+t\right)- \rho\left(t_n\right)$ is $\rho^*(t)$.
It follows from \eqref{z3.7} in Lemma \ref{zlem3.1} and the compactness argument that the convergence in \eqref{sl2r} holds.
Relying on the
uniqueness of the modulation parameters stated in Lemma \ref{zlem3.1}, we obtain that $\tilde{\rho}(t)=\rho^*(t),$ which completes the proof of Lemma \ref{zlem4.3}.
\end{proof}

\begin{lemma}\label{cwm}
  For a positive constant $A_0>0,$ we have that
  \begin{equation}\label{cwm1}
  m(\cdot+\rho(t_n),t_n)\to m_0^*(\cdot,t)~~\text{in}~~L^2(x>-A_0),
\end{equation}
\begin{equation}\label{cwm2}
  m(\cdot+\rho(t_n),t_n+t)\rightharpoonup m^*(\cdot,t)~~\text{in}~~L^2(\mathbb{R}),
\end{equation}
and
\begin{equation}\label{cwm3}
  m(\cdot+\rho(t_n),t_n+t)\rightharpoonup m^*(\cdot,t)~~\text{in}~~H^1(\mathbb{R})
\end{equation}
for $t\in\mathbb{R},$ and $\rho^*(t)$ is stated in Lemma \ref{zlem4.3}.
\end{lemma}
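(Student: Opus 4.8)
The plan is to transfer the convergences already established for the nonlinear substitution $\mathfrak{m}=m^{-1/(2b)}$ in Lemmas \ref{lem-1} and \ref{zlem4.3} back to $m$ itself. Two a priori facts drive everything. First, by orbital stability and \eqref{mb} the translated sequences $m(\cdot+\rho(t_n),t_n+t)$ are bounded in $H^1(\mathbb{R})$ uniformly in $n$ and in $t$, hence uniformly bounded in $L^\infty$; consequently the associated sequences $\mathfrak{m}(\cdot+\rho(t_n),t_n+t)$ are uniformly bounded in $L^\infty$ as well, and the same bounds pass to the weak limits $m_0^*$, $m^*(\cdot,t)$, $\mathfrak{m}_0^*$ and $\mathfrak{m}^*(\cdot,t)$. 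Second, since $b<-1$ we have $-2b>2>0$ and $-1/(2b)\in(0,1/2)$, so $s\mapsto s^{-2b}$ is continuous and Lipschitz on every bounded interval $[0,C_0]$ (its derivative $-2b\,s^{-2b-1}$ is increasing, hence bounded there), while $s\mapsto s^{-1/(2b)}$ is continuous on $[0,\infty)$; together with the positivity of $m$ propagated by the flow of \eqref{bfe}, this allows us to compose the power maps with limits.

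To prove \eqref{cwm1}, recall from Lemma \ref{lem-1} that $\mathfrak{m}(\cdot+\rho(t_n),t_n)\to\mathfrak{m}_0^*$ in $L^2(x>-A_0)$, with both sides bounded in $L^\infty$ by a fixed constant $C_0$. Writing $m(\cdot+\rho(t_n),t_n)=\bigl(\mathfrak{m}(\cdot+\rho(t_n),t_n)\bigr)^{-2b}$ and $m_0^*=(\mathfrak{m}_0^*)^{-2b}$, and using the Lipschitz bound for $s\mapsto s^{-2b}$ on $[0,C_0]$, one gets the pointwise estimate $\bigl|m(\cdot+\rho(t_n),t_n)-m_0^*\bigr|\lesssim\bigl|\mathfrak{m}(\cdot+\rho(t_n),t_n)-\mathfrak{m}_0^*\bigr|$, and integrating over $\{x>-A_0\}$ yields \eqref{cwm1}.

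For \eqref{cwm3} (which in particular implies \eqref{cwm2}), fix $t\in\mathbb{R}$. The $H^1$-boundedness of $m(\cdot+\rho(t_n),t_n+t)$ gives, along any subsequence, a further subsequence converging weakly in $H^1(\mathbb{R})$ to some $\tilde{M}\in H^1(\mathbb{R})$. On each bounded interval contained in $\{x>-A_0\}$ the compact embedding $H^1\hookrightarrow L^2$ (Rellich) upgrades this to strong $L^2$ convergence, hence to a.e. convergence along a further subsequence, so that $\mathfrak{m}(\cdot+\rho(t_n),t_n+t)=\bigl(m(\cdot+\rho(t_n),t_n+t)\bigr)^{-1/(2b)}\to\tilde{M}^{-1/(2b)}$ a.e. on $\{x>-A_0\}$. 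On the other hand \eqref{sl2} in Lemma \ref{zlem4.3} gives $\mathfrak{m}(\cdot+\rho(t_n),t_n+t)\to\mathfrak{m}^*(\cdot,t)$ in $L^2(x>-A_0)$, hence a.e. along a subsequence. Comparing the two a.e. limits forces $\tilde{M}^{-1/(2b)}=\mathfrak{m}^*(\cdot,t)$ a.e. on $\{x>-A_0\}$, i.e. $\tilde{M}=\bigl(\mathfrak{m}^*(\cdot,t)\bigr)^{-2b}=m^*(\cdot,t)$ on $\{x>-A_0\}$. Since $A_0>0$ is arbitrary, $\tilde{M}=m^*(\cdot,t)$ on $\mathbb{R}$. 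Thus every weakly convergent subsequence of $m(\cdot+\rho(t_n),t_n+t)$ has the same limit $m^*(\cdot,t)$, so the whole sequence converges weakly in $H^1(\mathbb{R})$, which is \eqref{cwm3}; \eqref{cwm2} follows a fortiori. The range $t<0$ requires no separate argument, as \eqref{sl2} and \eqref{wh1} in Lemma \ref{zlem4.3} already hold for all $t\in\mathbb{R}$.

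The main obstacle is precisely that weak convergence does not survive the nonlinear change of variables $m\leftrightarrow m^{-1/(2b)}$: one cannot raise a weak limit to a power. The resolution is the two-step upgrade used above — first promote weak $H^1$ convergence to local strong $L^2$ (hence a.e.) convergence via Rellich and the $\mathcal{Z}\hookrightarrow L^2$ compact embedding, then compose with the continuous power map, and finally pin down the limit by comparing it with the already-identified limit $\mathfrak{m}^*(\cdot,t)$ of the substituted sequence. The uniform $L^\infty$ bounds coming from the orbital-stability $H^1$ control, together with the positivity of $m$ preserved by the $b$-family flow, are exactly what render the power maps Lipschitz (respectively well-defined) on the relevant range.
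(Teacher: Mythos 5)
Your proof is correct, but it takes a genuinely different route from the paper's. For \eqref{cwm1} the paper re-runs the domain-splitting/compact-embedding argument of Lemma \ref{lem-1} directly on $m$, whereas you deduce it from Lemma \ref{lem-1} by composing with the map $s\mapsto s^{-2b}$, which is legitimate since $-2b>2$ makes this map locally Lipschitz and the uniform $H^1\hookrightarrow L^\infty$ control (from orbital stability, \eqref{mb}) confines everything to a fixed bounded interval. For \eqref{cwm2}--\eqref{cwm3} the paper writes the evolution equation for the difference $\tilde{\omega}_n=m(\cdot+\rho(t_n),t_n+t)-m^*(\cdot,t)$, runs a weighted Gronwall estimate to propagate the smallness at $t=0$, and then invokes Proposition \ref{thmwc} (weak continuity of the flow) together with uniqueness of limits, treating $t<0$ by the backward argument of Lemma \ref{zlem4.3}; you instead avoid all energy estimates and Proposition \ref{thmwc}: you extract weak $H^1$ subsequential limits, upgrade to local strong $L^2$ and a.e. convergence by Rellich, push the a.e. convergence through the continuous power map $s\mapsto s^{-1/(2b)}$, identify the limit with $m^*(\cdot,t)$ by comparison with \eqref{sl2}, and conclude by the subsequence-uniqueness principle, with $t<0$ covered for free since Lemma \ref{zlem4.3} is stated for all $t\in\mathbb{R}$. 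Your soft compactness argument is shorter and proves exactly the weak convergences asserted in the lemma; the paper's Gronwall route is heavier but yields in addition a quantitative strong convergence of the difference in the weighted space $L^2(\psi_L\,\mathrm{d}x)$ on compact time intervals, in the same spirit as the estimates used elsewhere in Section \ref{s4}. Two routine points you should make explicit: the a.e. extraction on the unbounded set $\{x>-A_0\}$ requires a countable exhaustion by bounded intervals with a diagonal subsequence, and the identity $\tilde{M}=m^*(\cdot,t)$ a.e. on $\{x>-A_0\}$ is a statement about the limit functions themselves, independent of the auxiliary subsequences, so letting $A_0\to\infty$ is justified.
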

\begin{proof}
  The proof of \eqref{cwm1} follows the same lines as that of Lemma \ref{lem-1}: the domain is split into two parts and the conclusion is obtained via the compact embedding theorem; therefore the details are omitted.

  We define that $\tilde{\omega}_n=m(x+\rho(t_n),t_n+t)-m^*(x,t).$ It follows from \eqref{cwm1} that
  \begin{equation}\label{cwm4}
    \int\tilde{\omega}_n^2(x,0)\psi_L(x)\,\mathrm{d}x\to0,
  \end{equation}
as $n\to\infty.$
  Moreover, we now compute \eqref{bfe}, and thereby one obtains that
  \begin{equation}\label{cwm5}
    \partial_t\tilde{\omega}_n=-u_n\partial_x\tilde{\omega}_n-\partial_x m^*\left(1-\partial_x^2\right)^{-1}\tilde{\omega}_n-b\partial_xu_n\tilde{\omega}_n
    -b m^*\left(1-\partial_x^2\right)^{-1}\partial_x\tilde{\omega}_n,
  \end{equation}
where $u_n=(1-\partial_x^2)^{-1}m(x+\rho(t_n),t_n+t).$ It follows from \eqref{cwm5} that
\begin{align}\label{cwm6}
  \frac{\mathrm{d}}{\mathrm{d}t}\int \tilde{\omega}_n^2\psi_L\,\mathrm{d}x=&2\int\tilde{\omega}_n\partial_t\tilde{\omega}_n\psi_L\,\mathrm{d}x\nonumber\\
  =&2\int\tilde{\omega}_n\psi_L\bigg(-u_n\partial_x\tilde{\omega}_n-\partial_x m^*\left(1-\partial_x^2\right)^{-1}\tilde{\omega}_n\nonumber\\
  &-b\partial_xu_n\tilde{\omega}_n
    -b m^*\left(1-\partial_x^2\right)^{-1}\partial_x\tilde{\omega}_n\bigg)\,\mathrm{d}x.
\end{align}
Similar to the Lemma \ref{zlem4.3}, we can obtain
\begin{align}\label{cwm7}
 \left|\int u_n\tilde{\omega}_n\partial_x\tilde{\omega}_n\psi_L\,\mathrm{d}x\right|+&\left|\int \partial_x m^*\tilde{\omega}_{n}(1-\partial_x^2)^{-1}\tilde{\omega}_n\psi_L\,\mathrm{d}x\right|+\left|\int \partial_xu_n\tilde{\omega}_{n}^2\psi_L\,\mathrm{d}x\right|\nonumber\\
 &+\left|\int m^*\tilde{\omega}_{n}(1-\partial_x^2)^{-1}\partial_x\tilde{\omega}_n\psi_L\,\mathrm{d}x\right|
 \lesssim\int \tilde{\omega}_{n}^2\psi_L'\,\mathrm{d}x.
\end{align}
We deduce from \eqref{cwm6}--\eqref{cwm7} that
\begin{align}\label{cwm10}
  \frac{\mathrm{d}}{\mathrm{d}t}\int \tilde{\omega}_{n}^2\psi_L\,\mathrm{d}x\lesssim\int \tilde{\omega}_{n}^2\psi_L\,\mathrm{d}x,
\end{align}
which implies that
\begin{equation}\label{cwm11}
  \sup_{t\in[0,t_0]}\int \tilde{\omega}_{n}^2(x,t)\psi_L(x)\,\mathrm{d}x\leq C\int \tilde{\omega}_{n}^2(x,0)\psi_L(x)\,\mathrm{d}x
\end{equation}
holds for any $t_0>0$ by employing the Gronwall inequality.
We deduce from  \eqref{cwm4} and \eqref{cwm11} that \eqref{cwm2} holds for $t\geq0.$ The weak limit \eqref{cwm3} for $t\geq0$ follows from Proposition \ref{thmwc} and the uniqueness of $H^1$ limit. For the cases of $t<0$ in \eqref{cwm2} and \eqref{cwm3}, the analysis is entirely analogous to the corresponding part of the Lemma \ref{zlem4.3}. Consequently, the convergence results in \eqref{cwm2} and \eqref{cwm3} hold for all $t$.

\end{proof}

We will show the compact exponential decay properties for $m^*(x,t)$ on the right.

\begin{lemma} \label{lem4.5}
  For $L\geq4,$ there exists a positive number $\epsilon$ such that
  \begin{align}\label{ued}
    \int_{x>A_0}(m^{*})^{-\frac1b}(x+\rho^*(t),t)\,\mathrm{d}x\leq \epsilon,
  \end{align}
  for all $A_0>0$, $t\geq0$ and $x_0>0.$
\end{lemma}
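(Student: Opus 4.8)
The plan is to transfer the right-tail control available for the original solution $m$ to the limit object $m^*$ by passing to the limit along $t_n\to+\infty$, combining the strong local $L^2$ convergence of Lemma \ref{zlem4.3} with the monotonicity estimate \eqref{ma2} of Lemma \ref{male}. Since $b<-1$ forces $-\tfrac1{2b}>0$, we have $(m^*)^{-\frac1b}=(\mathfrak{m}^*)^2$, and by the change of variables $z=x+\rho^*(t)$ the quantity to be estimated is
\[
\int_{x>A_0}(m^*)^{-\frac1b}(x+\rho^*(t),t)\,\mathrm{d}x=\int_{x>A_0}\big(\mathfrak{m}^*(x+\rho^*(t),t)\big)^2\,\mathrm{d}x .
\]

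First I would upgrade the convergence \eqref{sl2}, $\mathfrak{m}(\cdot+\rho(t_n),t_n+t)\to\mathfrak{m}^*(\cdot,t)$ in $L^2(x>-A_1)$ for every $A_1>0$, to one centered at the shifted point: using \eqref{sl2r}, $\rho(t_n+t)-\rho(t_n)\to\rho^*(t)$, together with the continuity of translations on $L^2$, one gets $\mathfrak{m}(\cdot+\rho(t_n+t),t_n+t)\to\mathfrak{m}^*(\cdot+\rho^*(t),t)$ in $L^2(x>-A_1)$. As strong $L^2$ convergence passes to squares, for each fixed $t\ge0$ and $A_0>0$ (choosing $A_1$ with $A_0+\rho^*(t)>-A_1$) this gives
\[
\int_{x>A_0}(m^*)^{-\frac1b}(x+\rho^*(t),t)\,\mathrm{d}x=\lim_{n\to\infty}\int_{x>A_0}m^{-\frac1b}(x+\rho(t_n+t),t_n+t)\,\mathrm{d}x .
\]
It then suffices to bound the right-hand side. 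Writing $s_n=t_n+t\to+\infty$, using $\psi_L(x-A_0)\ge\psi_L(0)=\tfrac12$ on $\{x>A_0\}$ and discarding the nonnegative term $m^{-\frac1b-2}m_x^2$, estimate \eqref{ma2} of Lemma \ref{male} yields
\[
\limsup_{n\to\infty}\int_{x>A_0}m^{-\frac1b}(x+\rho(s_n),s_n)\,\mathrm{d}x\ \lesssim\ \limsup_{s\to+\infty}\int\big(m^{-\frac1b}+m^{-\frac1b-2}m_x^2\big)(x+\rho(s),s)\,\psi_L(x-A_0)\,\mathrm{d}x\ \lesssim\ e^{-A_0/L},
\]
and the last bound does not depend on $t$, since $s_n\to+\infty$ for every fixed $t$. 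Hence $\int_{x>A_0}(m^*)^{-\frac1b}(x+\rho^*(t),t)\,\mathrm{d}x\lesssim e^{-A_0/L}$ uniformly in $t\ge0$ and $A_0>0$, which in particular is dominated by a fixed constant $\epsilon$ and can be made arbitrarily small by enlarging $A_0$; this is the claim.

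The change-of-variables identities and the reduction via $\psi_L\ge\tfrac12$ are routine. The delicate point is the \emph{uniformity in $t$}: it is essential that Lemma \ref{male} controls the $\limsup$ of the tail as the time variable goes to $+\infty$ by $\lesssim e^{-A_0/L}$, a quantity insensitive to the shift $t$, so that evaluating it along $s_n=t_n+t$ yields a $t$-free bound; one must also check that the strong local $L^2$ convergence survives translation by the (finite, for fixed $t$) quantity $\rho^*(t)$, which is exactly where \eqref{sl2r} and the uniform continuity of translations on $L^2$ are used.
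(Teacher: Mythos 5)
Your argument is correct and follows essentially the same route as the paper: both transfer the limsup tail bound \eqref{ma2} of Lemma \ref{male}, evaluated along the times $t_n+t\to+\infty$, to $m^*$ via the convergence results of Lemma \ref{zlem4.3}, and then conclude using the lower bound on the weight $\psi_L$ on the right half-line. The only (harmless) difference is that you invoke the strong half-line convergence \eqref{sl2} together with \eqref{sl2r} and continuity of translations to pass to the limit in the tail integral directly, whereas the paper multiplies by $\sqrt{\psi_L}$ and uses the weak $L^2$ convergence \eqref{wh1} plus weak lower semicontinuity of the norm.
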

\begin{proof}
  For any fixed $x_0>0$ and $t\in\mathbb{R},$ it reveals from \eqref{ma2} in Lemma \ref{male} and the positivity of $m^*$ that
  \begin{align}\label{ued1}
    \limsup_{t\to\infty}\int (m^{*})^{-\frac1b}&(x+\rho(t_n),t+t_n)\nonumber\\
    &\times\psi_L(x+\rho(t_n)-\rho(t+t_n)-x_0)\,\mathrm{d}x
    \lesssim e^{-x_0/L}.
  \end{align}
  We apply \eqref{wh1}--\eqref{sl2r} to obtain
  \begin{align*}
  \mathfrak{m}(x+\rho(t_n),t+t_n)&\sqrt{\psi_L(x+\rho_1(t_n)-\rho_1(t+t_n)-x_0)}\nonumber\\
  &\underset{n\to\infty}{\rightharpoonup}
  \mathfrak{m}^*(x,t)\sqrt{\psi_L(x-\rho^*(t)-x_0)}~~\text{in}~~L^2(\mathbb{R}),
  \end{align*}
  which implies that
  \begin{align}\label{ued2}
  &\left\|\mathfrak{m}^*(x,t)\sqrt{\psi_L(x-\rho^*(t)-x_0)}\right\|_{L^2}\nonumber\\
  &\leq\liminf_{n\to\infty} \left\|\mathfrak{m}(x+\rho(t_n),t+t_n)\sqrt{\psi_L(x+\rho(t_n)-\rho(t+t_n)-x_0)}\right\|_{L^2}.
  \end{align}
It follows from \eqref{ued1} and \eqref{ued2} that
\begin{align}\label{ued3}
  \int (m^{*})^{-\frac1b}(x+\rho^*(t),t)\psi_L(x-x_0)\,\mathrm{d}x\lesssim e^{-x_0/L}.
\end{align}
Thus, \eqref{ued} holds since $\psi_L(x-x_0)\geq e^{(x-x_0)/L}$ for $x-x_0<0.$. This completes proof of Lemma \ref{lem4.5}.
\end{proof}

We now present the exponential decay properties of $m^*(x,t)$ on the left. To this end, we are ready to prove the monotonicity property for $\mathfrak{m}(x,t)$. For $x_0>0$ and $t_0\in\mathbb{R},$ we define the quantities
$$\tilde{I}_{x_0,t_0}(t)=\int m^{-\frac1b}(x,t)\psi_L(\tilde{x}_1)\,\mathrm{d}x\quad\text{and}\quad\tilde{J}_{x_0,t_0}(t)=\frac{1}{b^2}\int m^{-\frac1b-2}m_x^2(x,t)\psi_L(\tilde{x}_1)\,\mathrm{d}x,$$
where $\tilde{x}_1=x-\rho(t)+4b(t-t_0)+x_0$ for $t\leq t_0.$

\begin{lemma}\label{tim1}
  Let $b<-1$ and $m\in C(\mathbb{R},\mathcal{Z}(\mathbb{R}))$ be the solutions of \eqref{bfe} satisfying \eqref{z3.3}--\eqref{z3.7}. For any $x_0>0$ and $0\leq t_1\leq t_2,$ it holds that
  \begin{equation}\label{Itmb}
    \tilde{I}_{x_0,t_1}(t_2)-\tilde{I}_{x_0,t_1}(t_1)\lesssim e^{-x_0/L}.
  \end{equation}
\end{lemma}
\begin{proof}
  Arguing as Lemma \ref{uvm1}, it follows from \eqref{bfe} that
  \begin{align}\label{ti1}
    \frac{\mathrm{d}}{\mathrm{d}t}\left(\tilde{I}(t)+\tilde{J}(t)\right)=&-\int m^{-\frac{1}{b}}u\psi_L'(\tilde{x}_1)\,\mathrm{d}x+\frac{1}{b^2}\int m^{-\frac{1}{b}-2}m_x^2u\psi_L'(\tilde{x}_1)\,\mathrm{d}x\nonumber\\&+\frac{2}{1-b}\int m^{-\frac{1}{b}+1}\psi_L'(\tilde{x}_1)\,\mathrm{d}x
    +(4b-\rho'(t))\int m^{-\frac{1}{b}}\psi_L'(\tilde{x}_1)\,\mathrm{d}x\nonumber\\&+\left(\frac{4}{b}-\rho'(t)\right)\int m^{-\frac{1}{b}-2}m_x^2\psi_L'(\tilde{x}_1)\,\mathrm{d}x,
  \end{align}
where $\epsilon_0$ is chosen to be sufficiently small and $R_0$ is defined in Lemma \ref{uvm1}.

Next, let us consider the first term of the right hand of \eqref{ti1}. \\ For $\int m^{-\frac{1}{b}}u\psi_L'(\tilde{x}_1)\,\mathrm{d}x=\int m^{-\frac{1}{b}}\psi_L'(\tilde{x}_1)\left(u-\gamma q(\cdot-\rho)+\gamma q(\cdot-\rho)\right)\,\mathrm{d}x,$ there are two cases for \\ $|x-\rho(t)|.$ In the case $|x-\rho(t)|\geq R_0,$ invoking the properties of $q(x)$ and \eqref{z3.6}, we obtain, for some $\delta_0>0,$
  $$\left|\int m^{-\frac{1}{b}}\psi_L'(\tilde{x}_1)q(\cdot-\rho)\,\mathrm{d}x\right|\leq Ce^{-\delta_0R_0}\int m^{-\frac{1}{b}}\psi_L'(\tilde{x}_1)\,\mathrm{d}x.$$
  In the case $|x-\rho(t)|\leq R_0,$ we have
  $$|\tilde{x}_1|=\left|x-\rho(t)+4b(t-t_0)+x_0\right|\geq\left|4b(t-t_0)+x_0\right|-|x-\rho(t)|\geq4b(t-t_0)+x_0-R_0,$$
  which leads to
  $$\left|\int m^{-\frac{1}{b}}\psi_L'(\tilde{x}_1)q(\cdot-\rho)\,\mathrm{d}x\right|\leq Ce^{(R_0+4b(t_0-t)-x_0)/L}\int m^{-\frac{1}{b}}\,\mathrm{d}x,$$
since $|\psi'_L(x_1)|\lesssim e^{-|\tilde{x}_1|/L}.$ A similar estimates for the second and third terms of the right hand of \eqref{ti1} also hold. Thus, we deduce from \eqref{z3.6} and \eqref{z3.7} in Lemma \ref{zlem3.1} that
\begin{align}\label{ti2}
    \frac{\mathrm{d}}{\mathrm{d}t}\left(\tilde{I}(t)+\tilde{J}(t)\right)
    \leq& 2b\int m^{-\frac{1}{b}}\psi_L'(\tilde{x}_1)\,\mathrm{d}x+\frac{2}{b}\int m^{-\frac{1}{b}-2}m_x^2\psi_L'(\tilde{x}_1)\,\mathrm{d}x\nonumber\\
    &+Ce^{(R_0+4b(t_0-t)-x_0)/L}.
  \end{align}
Integrating \eqref{ti2} on $(t_1,t_2)$, we obtain that \eqref{Itmb} holds. Letting $u(x,t)=m(-x,-t)$ and $\rho_{u}=-\rho(-t),$ substituting $u(x,t)$ into \eqref{Itmb}, we have
\begin{align*}
  \int m^{-\frac1b}(x,-t_2)&\psi_L\left(-x+\rho(-t_2)+4b(t_2-t_1)+x_0\right)\,\mathrm{d}x\nonumber\\
  \leq&\int m^{-\frac1b}(x,-t_1)\psi_L\left(-x+\rho(-t_1)+x_0\right)\,\mathrm{d}x+e^{-x_0/L},
\end{align*}
which implies that
\begin{align}\label{ti3}
  \int m^{-\frac1b}(x,t_2)&\psi_L\left(x-\rho(t_2)-x_0\right)\,\mathrm{d}x\nonumber\\
  \leq&\int m^{-\frac1b}(x,t_1)\psi_L\left(x-\rho(t_1)+4b(t_1-t_2)-x_0\right)\,\mathrm{d}x+e^{-x_0/L},
\end{align}
since $\psi_L(-x)=1-\psi_L(x)$ and $t_2'=-t_1\geq t_1'=-t_2.$ This completes the proof of Lemma \ref{tim1}.
\end{proof}

\begin{lemma} \label {L3}
  Fixing $L\geq4,$ there exists a positive number $\epsilon$ such that
  \begin{align}\label{uel}
    \int_{x<-A_0}\left(m^{*}\right)^{-\frac1b}(x+\rho^*(t),t)\,\mathrm{d}x\leq \epsilon,
  \end{align}
  for all $A_0>0$, $t\geq0$ and $x_0>0.$
\end{lemma}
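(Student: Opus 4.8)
The plan is to run the argument of Lemma~\ref{lem4.5} with the spatial orientation reversed, i.e., to replace the forward-in-time monotonicity of the localized $F_2$-density (Lemma~\ref{male}, which rests on Lemma~\ref{uvm1}) by the mirrored monotonicity of Lemma~\ref{tim1}. Since a lefton is stationary, its two spatial tails of the $m^{-1/b}$-mass are controlled by monotonicity formulae transported in opposite directions: the right tail is handled in Lemma~\ref{lem4.5} by letting $\psi_L$ drift leftward as $t$ increases, whereas the left tail is handled by Lemma~\ref{tim1}, obtained through the reflection $(x,t)\mapsto(-x,-t)$, under which \eqref{bfe} is invariant. The output we want for $m^*$ is the mirror of \eqref{ued3}, namely $\int (m^*)^{-1/b}(x+\rho^*(t),t)\,\psi_L(-(x+x_0))\,\mathrm{d}x\lesssim e^{-x_0/L}$ for every $x_0>0$, from which \eqref{uel} follows immediately.

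First I would package Lemma~\ref{tim1} into a $\limsup_{t\to\infty}$ statement for the left tail of the original solution $m$, just as Lemma~\ref{male} packages Lemma~\ref{uvm1} for the right tail. Fixing $x_0>0$ and $t\ge 0$, I apply the monotonicity of Lemma~\ref{tim1} with comparison times $t_1=t$ and $t_2=t_n$ (legitimate once $t_n\ge t$) and estimate the reference-time localized mass exactly as in \eqref{rm1}--\eqref{rm3}, using the exponential decay \eqref{ml} of $Q$, the orbital bound \eqref{z3.2}, and the uniform control $\int m^{-1/b}(\cdot,\tau)\,\mathrm{d}x\le F_2\lesssim 1$. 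Since $m$ satisfies the modulation hypotheses \eqref{z3.3}--\eqref{z3.7} by Lemma~\ref{zlem3.1}, this yields
\[
\limsup_{n\to\infty}\int m^{-\frac1b}\big(x+\rho(t_n),t_n+t\big)\,\psi_L\!\big(-(x+\sigma_n+x_0)\big)\,\mathrm{d}x\;\lesssim\;e^{-x_0/L},\qquad \sigma_n:=\rho(t_n)-\rho(t_n+t),
\]
the reflected weight $\psi_L(-\,\cdot\,)=1-\psi_L(\,\cdot\,)$ being again smooth, bounded, and monotone, with the derivative bounds of \eqref{psi}.

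Next I would pass to the limit $n\to\infty$ using Lemma~\ref{zlem4.3}: the weak convergence $\mathfrak{m}(\cdot+\rho(t_n),t_n+t)\rightharpoonup\mathfrak{m}^*(\cdot,t)$ in $L^2(\mathbb{R})$ (recall $\mathfrak{m}=m^{-1/(2b)}$) together with $\sigma_n\to-\rho^*(t)$ gives
\[
\mathfrak{m}(\cdot+\rho(t_n),t_n+t)\sqrt{\psi_L(-(\cdot+\sigma_n+x_0))}\;\rightharpoonup\;\mathfrak{m}^*(\cdot,t)\sqrt{\psi_L(-(\cdot-\rho^*(t)+x_0))}\quad\text{in }L^2(\mathbb{R}),
\]
and weak lower semicontinuity of the $L^2$-norm, followed by the change of variables $x\mapsto x+\rho^*(t)$ (as between \eqref{ued2} and \eqref{ued3}), yields $\int (m^*)^{-1/b}(x+\rho^*(t),t)\,\psi_L(-(x+x_0))\,\mathrm{d}x\lesssim e^{-x_0/L}$. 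Since $\psi_L(-(x+x_0))\ge\psi_L(0)=\tfrac12$ for $x\le -x_0$, taking $x_0=A_0$ gives
\[
\int_{x<-A_0}(m^*)^{-\frac1b}\big(x+\rho^*(t),t\big)\,\mathrm{d}x\;\le\;2\int (m^*)^{-\frac1b}\big(x+\rho^*(t),t\big)\,\psi_L\!\big(-(x+A_0)\big)\,\mathrm{d}x\;\lesssim\;e^{-A_0/L},
\]
which is $\le\epsilon$ once $A_0$ is chosen large --- and a single large $A_0$ is all that is needed when this estimate is fed into Theorem~\ref{nlt}. This is \eqref{uel}.

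The main obstacle will be the reference-time estimate of the first step: one must verify that the left tail of $m(\cdot+\rho(t_n),t_n+t)$ beyond $x_0$ is genuinely $O(e^{-x_0/L})+o(1)$ as $n\to\infty$. This uses orbital stability to keep $m$ within $\epsilon$ of $\gamma Q$ in $H^1$, the exponential decay of $Q$ from \eqref{ml}, and --- crucially --- the uniform bound $\int m^{-1/b}\,\mathrm{d}x\le F_2\lesssim 1$ to prevent the negative power $m^{-1/b}$ from amplifying the tail; here the standing assumptions $m>0$ and $F_2<\infty$ are indispensable, exactly as in the global well-posedness theory. A secondary point is that, unlike $F_2$, the quantity $\int m^{-1/b}\,\mathrm{d}x$ is not conserved, so when transporting the reflected monotonicity it can only be bounded by $F_2$; this still suffices, since it enters only as an $O(1)$ factor multiplied by the exponentially small weight evaluated far to the left of the lefton core, as in \eqref{rm2}.
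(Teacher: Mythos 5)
Your plan hinges on the very first step: a mirrored analogue of Lemma~\ref{male}, i.e.\ a $\limsup$ bound on the \emph{left} tail of $m$ at the large times $t_n+t$, obtained from Lemma~\ref{tim1} anchored at a fixed reference time, with the reference-time mass estimated ``exactly as in \eqref{rm1}--\eqref{rm3}''. That step is the genuine gap. In the right-tail argument the weight, transported back to the reference time, has its transition point displaced by $4|b|\Delta t$ \emph{away} from the soliton core, so the reference term is an $O(1)$ (at best $F_2$-bounded) density multiplied by a weight that is exponentially small where that density lives --- this is exactly what \eqref{rm1}--\eqref{rm3} exploit. For the left tail the drift works against you: with the displacement for which the monotonicity actually holds, the reference weight covers essentially the whole line, so the reference term is only bounded by $F_2(m_0)\lesssim 1$, not by $\epsilon+e^{-x_0/L}$ (note $\int m^{-1/b}\,\mathrm{d}x$ is not conserved and, for the fractional power $-1/b\in(0,1)$, is not controlled by the $H^1$ orbital bound over an unbounded region); whereas with the favorable displacement (reference weight pushed far to the left, where the data is exponentially small) the corresponding monotonicity statement is false: mass sitting in the band just to the left of $\rho-x_0$ at the reference time is transported rightward only with the small positive speed $u$, hence still lies in $\{x<\rho(t)-x_0\}$ at the later time although it never belonged to the far-left reference region, and this contribution is not $O(e^{-x_0/L})$. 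So there is no left-tail analogue of Lemma~\ref{male} anchored at a fixed time, and your closing remark that the $F_2$-sized factor is always paired with an exponentially small weight ``as in \eqref{rm2}'' is precisely where the mirrored argument breaks down.

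This structural obstruction is why the paper's proof of Lemma~\ref{L3} is not a mirror of Lemma~\ref{lem4.5}. Instead it compares two \emph{large} times $t_n+\tilde t\le t_{\tilde n}$ along the extracted sequence, applying the monotonicity \eqref{Itmb} and its reflected consequence \eqref{ti3} between them (so the elapsed time, and hence the weight drift, is tied to the sequence), and then uses the convergence input at both large times --- \eqref{cgl2} of Lemma~\ref{lem-1} at $t_{\tilde n}$ and \eqref{wh1}, \eqref{sl2} of Lemma~\ref{zlem4.3} at $t_n+\tilde t$ (cf.\ \eqref{uel1}, \eqref{uel3}) --- to transfer the information to $m^*$ and to $m_0^*$; the left-tail bound finally comes out of the mass-accounting step \eqref{meir}--\eqref{meir2}, where the left-captured mass of $m^*$ is written as total mass minus right-captured mass and the constant depends on $F_2(m_0^*)$. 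Your subsequent steps (weak lower semicontinuity of the weighted $L^2$ norm for $\mathfrak m$, the translation by $\rho^*(t)$ via \eqref{sl2r}, and $1-\psi_L(x+x_0)\ge\tfrac12$ for $x\le-x_0$) are fine and coincide with the paper's endgame, but without the two-large-times comparison and the strong convergence at the later time the decisive first estimate is unsupported.
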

\begin{proof}
Suppose that $\tilde{n}\in\mathbb{N}$ and $\tilde{t}\in\mathbb{R}$ are given such that $t_{\tilde{n}}\geq t_n+\tilde{t}$.
We start by employing the monotonicity property \eqref{Itmb} in Lemma \ref{tim1} to obtain that
\begin{align}\label{uel1}
  \int m^{-\frac1b}\left(x, t_{\tilde{n}}\right) &\psi_L\left(x-\rho\left(t_{\tilde{n}}\right)+4b\left(t_{\tilde{n}}-\left(t_{n}+\tilde{t}\right)\right)+x_{0}\right) \,\mathrm{d} x \nonumber\\
& \lesssim \int m^{-\frac1b}\left(x, t_{n}+\tilde{t}\right) \psi_L\left(x-\rho\left(t_{n}+\tilde{t}\right)+x_{0}\right) \mathrm{d} x+e^{-x_{0} / L} .
\end{align}
Moreover, relying on \eqref{wh1} and \eqref{sl2} in Lemma \ref{zlem4.3}, it is inferred  that
\begin{align}\label{uel2}
   \mathfrak{m}\left(x, t_{n}+\tilde{t}\right) &\sqrt{\psi_L\left(x-\rho\left(t_{n}+\tilde{t}\right)+x_{0}\right)}\nonumber\\
   &\rightharpoonup \mathfrak{m}^*(x,\tilde{t})\sqrt{\psi_L\left(x-\rho^*\left(\tilde{t}\right)+x_{0}\right)}~~\text{in}~~L^2,
\end{align}
as $n\to\infty.$ There exists $\tilde{n}_0\in\mathbb{N}$ such that
\begin{align}\label{uel3}
   \int m^{-\frac1b}\left(x, t_{n}+\tilde{t}\right) &\psi_L\left(x-\rho\left(t_{n}+\tilde{t}\right)+x_{0}\right)\,\mathrm{d} x \nonumber\\
   &\lesssim \int (m^*)^{-\frac1b}(x,\tilde{t})\psi_L\left(x-\rho^*\left(\tilde{t}\right)+x_{0}\right)\,\mathrm{d} x +e^{-x_0/L},
\end{align}
for $n\geq \tilde{n}_0.$ It follows from \eqref{uel1} and \eqref{uel3} that
\begin{align}\label{uel4}
   &\int m^{-\frac1b}\left(x, t_{\tilde{n}}\right) \psi_L\left(x-\rho\left(t_{\tilde{n}}\right)+4b\left(t_{\tilde{n}}-\left(t_{n}+\tilde{t}\right)\right)+x_{0}\right) \,\mathrm{d} x \nonumber\\
   &\lesssim \int (m^*)^{-\frac1b}(x,\tilde{t})\psi_L\left(x-\rho^*\left(\tilde{t}\right)+x_{0}\right)\,\mathrm{d} x +e^{-x_0/L}.
\end{align}
In addition, it transpires from \eqref{cgl2} in Lemma \ref{lem-1} that
\begin{align*}
   &\int m^{-\frac1b}\left(x+\rho\left(t_{\tilde{n}}\right), t_{\tilde{n}}\right) \psi_L\left(x+4b\left(t_{\tilde{n}}-\left(t_{n}+\tilde{t}\right)\right)+x_{0}\right) \,\mathrm{d} x
   \to0,
\end{align*}
as $\tilde{n}\to\infty.$ Then, we have
\begin{align}\label{uel5}
\int m^{-\frac1b}\left(x+\rho_{1}\left(t_{\tilde{n}}\right), t_{\tilde{n}}\right) \psi_L\left(x+4b\left(t_{\tilde{n}}-\left(t_{n}+\tilde{t}\right)\right)+x_{0}\right) \,\mathrm{d} x\gtrsim-e^{-x_0/L}.
\end{align}
We derive from \eqref{uel4}--\eqref{uel5} that
\begin{align}\label{uel6}
\int (m^*)^{-\frac1b}(x,t)\left(-\psi_L\left(x-\rho^*\left(t\right)+x_{0}\right)\right)\,\mathrm{d} x \lesssim e^{-x_0/L}.
\end{align}

In addition, invoking \eqref{ti3}, we have
\begin{align}\label{mei}
    \int m^{-\frac1b}\left(x, t_{\tilde{n}}\right) &\psi_L\left(x-\rho\left(t_{\tilde{n}}\right)-x_{0}\right) \,\mathrm{d} x \nonumber\\
& \lesssim \int m^{-\frac1b}\left(x, t_{n}+\tilde{t}\right) \psi_L\left(x-\rho\left(t_{n}+\tilde{t}\right)-4b\left(t_{\tilde{n}}-\left(t_{n}+\tilde{t}\right)\right)-x_{0}\right) \mathrm{d} x+e^{-x_{0} / L} .
\end{align}
Similarly, \eqref{wh1} and \eqref{sl2} in Lemma \ref{zlem4.3} ensure that
\begin{align}\label{mei2}
 \mathfrak{m}\left(x, t_{\tilde{n}}\right) \sqrt{\psi_L\left(x-\rho\left(t_{\tilde{n}}\right)-x_{0}\right)}
\rightharpoonup\mathfrak{m}_0^*\sqrt{\psi_L\left(x-x_{0}\right)}\quad\text{in}\quad L^2(\mathbb{R})
\end{align}
and
\begin{align}\label{mei3}
   \mathfrak{m}\left(x, t_{n}+\tilde{t}\right) \sqrt{\psi_L\left(x-\rho\left(t_{n}+\tilde{t}\right)-4b\left(t_{\tilde{n}}-\left(t_{n}+\tilde{t}\right)\right)-x_{0}\right)}
   \rightharpoonup0,
\end{align}
as $n\to\infty.$ It is deduced from \eqref{mei}--\eqref{mei3} that
$$\int \left(m_0^*\right)^{-\frac1b}\psi_L\left(x-x_{0}\right)\,\mathrm{d}x\lesssim e^{-x_0/L},$$
which together with \eqref{uel6} yields that
\begin{align}\label{meir}
    &\int \left(m_0^*\right)^{-\frac1b}\psi_L\left(x-x_{0}\right)-(m^*)^{-\frac1b}\psi_L\left(x-\rho^*\left(t\right)+x_{0}\right)\,\mathrm{d}x\nonumber\\
    &=\int \left(m_0^*\right)^{-\frac1b}\psi_L\left(x-x_{0}\right)\,\mathrm{d}x-\int(m^*)^{-\frac1b}\,\mathrm{d}x+\int(m^*)^{-\frac1b}\left(1-\psi_L\left(x-\rho^*\left(t\right)+x_{0}\right)\right)\,\mathrm{d}x.
\end{align}
Therefore, we have
\begin{align}\label{meir2}
    \int(m^*)^{-\frac1b}(x+\rho^*\left(t\right),t)\left(1-\psi_L\left(x+x_{0}\right)\right)\,\mathrm{d}x\leq  Ce^{-x_0/L},
\end{align}
where $C$ is a positive constant depending on $F_2(m_0^*)<\infty.$
The proof of \eqref{uel} is finished since $1-\psi_L(x+x_0)\geq\frac12$ for $x\leq-x_0$. This completes the proof of Lemma \ref{L3}.
\end{proof}
We are now in the position to prove the asymptotic stability result.
\begin{proof}[Proof of Theorem \ref{as}]
For any sequence $\{t_{n}\}$,
there exist a subsequence $\{t_{n_k}\}$ with $t_{n_k}\to\infty$ and $m_0^*\in \mathcal{Z}(\mathbb{R})$ such that
\begin{equation}\label{iw1}
m(\cdot+\rho(t_{n_k}),t_{n_k})\rightharpoonup m_0^* \text { in } H^1(\mathbb{R}).
\end{equation}
Thus, the solution $m^*(\cdot,t)$ of \eqref{bfe} with $m^*(\cdot,0)=m_0^*$ satisfying \eqref{stbm}, \eqref{ued}, \eqref{uel} and one also has $\rho^*(0)=0.$
It follows from \eqref{ued}, \eqref{uel} and Theorem \ref{nlt} that there exists $r\in\mathbb{R}$ such that
\begin{align}\label{dd}
m(x,t)=\gamma Q(x-r).
\end{align}
By using the uniqueness of the decomposition in Lemma \ref{zlem3.1}, we obtain that $r=\rho^*(0)=0.$ Hence, we deduce from \eqref{iw1} and \eqref{dd} that $m(\cdot+\rho(t_{n_k}),t_{n_k})-\gamma Q$ weakly converges to $0$ in $H^1(\mathbb{R})$ as $k\to\infty.$ This convergence result holds true for any time sequence $\{t_n\}$ with $t_n\to\infty,$ which yields that
\begin{equation}\label{asymr}
    m(\cdot+\rho(t),t)\underset{t\to\infty}{\rightharpoonup} \gamma Q\quad\text{in}\quad H^1(\mathbb{R}).
\end{equation}

Now, there remains to prove \eqref{asympt lim}. Integrating \eqref{Iuv} on $[t_1,t_2]$, we obtain that
\begin{align}\label{re1}
 \int &\left(m^{-\frac{1}{b}}+\frac{1}{b^2}m^{-\frac{1}{b}-2}m_x^2\right)(x,t_2)\psi_L\left(x-\rho(t_1)+4b(t_2-t_1)
 -x_0\right)\,\mathrm{d}x\nonumber\\
 &\leq\int\left(m^{-\frac{1}{b}}+\frac{1}{b^2}m^{-\frac{1}{b}-2}m_x^2\right)(x,t_1)\psi_L\left(x-\rho(t_1)
 -x_0\right)\,\mathrm{d}x+Ke^{-x_0/L},
\end{align}
for all $x_0>0$, $t_2\geq t_1$. We also note that the decomposition $\varepsilon^2(x,t)=m^2(x,t)-2\gamma\varepsilon(x,t) Q(x-\rho(t))-\gamma^2Q^2(x-\rho(t))$ and
$(\partial_x\varepsilon)^2(x,t)=m_x^2(x,t)-2\gamma\varepsilon_x(x,t) Q'(x-\rho(t))-\gamma^2Q'^2(x-\rho(t))$.
In addition, relying on \eqref{z3.6}, the orthogonality condition in \eqref{voc} and the properties of $Q$ and $\psi_L$, we have
\begin{align}\label{re2}
  \left|\int\left(\varepsilon(x+\rho(t),t)Q+\varepsilon_x(x+\rho(t),t)Q'\right)
  \psi_L(x-x_0)\,\mathrm{d}x\right|\lesssim e^{-x_0/L}.
\end{align}
Therefore,  we deduce from \eqref{z3.6}, \eqref{asymr} and \eqref{re1}--\eqref{re2} that
\begin{align}\label{re4}
 \underset{t\to\infty}{\lim\sup}\int \left(\varepsilon^2+(\partial_x\varepsilon)^2\right)(x,t_2)&\psi_L\left(x-\rho(t_1)+4b(t_2-t_1)
 -x_0\right)\,\mathrm{d}x\lesssim e^{-x_0/L},
\end{align}
where $-1/b-2<0$ for $b<-1.$
We define $\beta t:=\rho(t_1)-4b(t-t_1)+x_0$ for $0<t_1<t$ and $t\to\infty$ as $t_1\to\infty$. Thus, by using \eqref{re4}, we have
\begin{equation*}\label{re7}
   \limsup_{t\to\infty}\int\left(\varepsilon^2+(\partial_x\varepsilon)^2\right)(x,t)\psi_L(x-\beta t)\,\mathrm{d}x
   \lesssim e^{-x_0/L},
 \end{equation*}
and the proof of \eqref{asympt lim} is concluded. This completes the proof of asymptotic result in Theorem \ref{as}.
\end{proof}
\section{Appendix}
\begin{proof}[Proof of the operator \texorpdfstring{$\mathcal{B}(Q)\mathcal{L}$ and $\mathcal{L}\mathcal{B}(Q)$}{B(Q)L and LB(Q)}]
Recalling that
\begin{align*}
 \mathcal{L}&=k\left(-\partial_x\left(\frac{2\alpha}{b^2}\partial_x\right)-\frac{2(b+1)\alpha}{b}\right),\\
   \mathcal{B}&=-\left(b m \partial_x+m_x\right)\left(\partial_x-\partial_x^3\right)^{-1}\left(b m \partial_x+(b-1) m_x\right),
\end{align*}
we get that
\begin{align*}
 \mathcal{L}v&=-\frac{2k}{b^2}\left(\partial_x(\alpha v_x)+b(b+1)\alpha v\right):=-\frac{2k}{b^2}\tilde{\mathcal{L}}v,\\
   \mathcal{B}(Q)&=-\left(b Q \partial_x+Q'\right)\left(\partial_x-\partial_x^3\right)^{-1}\left(b Q \partial_x+(b-1) Q'\right).
\end{align*}
A direct calculation then gives
\begin{align*}
  \left(b Q \partial_x+(b-1) Q'\right)  \tilde{\mathcal{L}}v=&b Q^{-\frac{1}{b}-1}\partial_x^3v-3(b+1)Q^{-\frac{1}{b}-2} Q'\partial_x^2v+b(3b^2+6b+2)Q^{-\frac{1}{b}-1}\partial_xv\\
  &-b(b+1)(b+2)Q^{-\frac{1}{b}-2}Q'v-\frac{b(2b+1)(b+3)}{2k}\partial_xv\\
  =&-b\left(\partial_x-\partial_x^3\right)\left(Q^{-\frac{1}{b}-1}v\right)+\frac{b^3-b^2}{2k}\partial_xv,
\end{align*}
which leads to
\begin{align}\label{ble}
    \mathcal{B}(Q)\mathcal{L}v=-\frac{2k}{b^2}\mathcal{B}(Q)\tilde{\mathcal{L}}v=&2k Q^{-\frac{1}{b}-1}Q'v-2kQ^{-\frac{1}{b}}\partial_xv\nonumber\\
    &+b(b-1)Q\partial_xh+(b-1)Q'h,
\end{align}
where $h:=(1-\partial_x^2)^{-1}v.$ We deduce from
$$q'=\frac{2k}{b(b-1)}Q^{-\frac{1}{b}-1}Q',~~q=\frac{2k}{1-b}Q^{-\frac{1}{b}}$$
that
\begin{align*}
\frac{1}{1-b}\mathcal{B}(Q)\mathcal{L}v=&-bq'v-q\partial_xv-bQ\partial_xh-Q'h\\
=&\partial_x\left(q\partial_x^2h+(b-1)q'\partial_xh-(b+1)qh+q''h\right).
\end{align*}

Next, we compute the operator $\mathcal{L}\mathcal{B}(Q)$. In view of the definition of $\mathcal{B}(Q),$ one defines that
$$f=\left(\partial_x-\partial_x^3\right)^{-1}\left(b Q \partial_xv+(b-1) Q'v\right):=\left(\partial_x-\partial_x^3\right)^{-1}V,$$
which indicates that $\mathcal{B}(Q)v=-bQ\partial_xf-Q'f.$ In addition, a straightforward computation shows that
\begin{align*}
\tilde{\mathcal{L}}\mathcal{B}(Q)v&=-\frac{2b+1}{b}Q^{-\frac{1}{b}-3}Q'\partial_x\left(\mathcal{B}(Q)v\right)+Q^{-\frac{1}{b}-2}\partial_x^2\left(\mathcal{B}(Q)v\right)
+b(b+1)Q^{-\frac{1}{b}-2}\mathcal{B}(Q)v,\\
    \partial_x\left(\mathcal{B}(Q)v\right)&=-bQ\partial_x^2f-(b+1)Q'\partial_xf-Q''f,   \quad {\rm and} \\
    \partial_x^2\left(\mathcal{B}(Q)v\right)&=-bQ\partial_x^3f-(2b+1)Q'\partial_x^2f-(b+2)Q''\partial_x f-Q'''f,
\end{align*}
which transpires from  the fact
$$Q''=b^2Q-\frac{b(3b+1)}{2k}Q^{\frac1b+2},~~(Q')^2=b^2\left(Q^2-\frac{Q^{\frac{1}{b}+3}}{k}\right),$$
that
\begin{align*}
    \tilde{\mathcal{L}}\mathcal{B}(Q)v=&-bQ^{-\frac{1}{b}-1}\partial_x^3f+bQ^{-\frac{1}{b}-1}\partial_xf+\frac{b^2-b^3}{2k}\partial_xf\\
    =&bQ^{-\frac{1}{b}-1}\left(V-\partial_xf\right)+bQ^{-\frac{1}{b}-1}\partial_xf+\frac{b^2-b^3}{2k}\partial_xf\\
    =&bQ^{-\frac{1}{b}-1}V+\frac{b^2-b^3}{2k}(1-\partial_x^2)^{-1}V.
\end{align*}
It then follows that
\begin{align}\label{lbe}
    \mathcal{L}\mathcal{B}(Q)v=-\frac{2k}{b^2}\tilde{\mathcal{L}}\mathcal{B}(Q)v=&-2kQ^{-\frac{1}{b}}\partial_xv+\frac{2k(1-b)}{b}Q^{-\frac{1}{b}-1}Q'v\nonumber\\
    &+(b-1)(1-\partial_x^2)^{-1}\left(bQ\partial_xv+(b-1)Q'v\right).
\end{align}
This completes the proof.
\end{proof}
\begin{proof}[Proof of \eqref{i1}]
We compute from \eqref{bfe} that
\begin{align}\label{vfm1}
    \frac{\mathrm{d}}{\mathrm{d}t}\int m^{-\frac{1}{b}}\psi_L(x_1)\,\mathrm{d}x=&-\frac{1}{b}\int m^{-\frac{1}{b}-1}m_t\psi_L(x_1)\,\mathrm{d}x+4b\int m^{-\frac{1}{b}}\psi_L'(x_1)\,\mathrm{d}x\nonumber\\
    =&\int m^{-\frac{1}{b}}u\psi_L'(x_1)\,\mathrm{d}x+2\int m^{-\frac{1}{b}}u_x\psi_L(x_1)\,\mathrm{d}x+4b\int m^{-\frac{1}{b}}\psi_L'(x_1)\,\mathrm{d}x.
\end{align}
In a similar way, we also have that
\begin{align*}
    \frac{1}{b^2}\frac{\mathrm{d}}{\mathrm{d}t}\int m^{-\frac{1}{b}-2}m_x^2\psi_L(x_1)\,\mathrm{d}x=&-\frac{2b+1}{b^3}\int m^{-\frac{1}{b}-3}m_t m_x^2\psi_L(x_1)\,\mathrm{d}x+\frac{2}{b^2}\int m^{-\frac{1}{b}-2}m_x m_{xt}\psi_L(x_1)\,\mathrm{d}x\nonumber\\
    &+\frac{4}{b}\int m^{-\frac{1}{b}-2}m_x^2\psi_L'(x_1)\,\mathrm{d}x:=\Delta_1+\Delta_2+\frac{4}{b}\int m^{-\frac{1}{b}-2}m_x^2\psi_L'(x_1)\,\mathrm{d}x,
\end{align*}
where
\begin{align}\label{vfm2}
    \Delta_1=&\frac{2b+1}{b^3}\int m^{-\frac{1}{b}-3} m_x^3 u\psi_L(x_1)\,\mathrm{d}x+\frac{2b+1}{b^2}\int m^{-\frac{1}{b}-2} m_x^2 u_x\psi_L(x_1)\,\mathrm{d}x,
\end{align}
and
\begin{align}\label{vfm3}
    \Delta_2=&-\frac{2(b+1)}{b^2}\int m^{-\frac{1}{b}-2} m_x^2 u_x\psi_L(x_1)\,\mathrm{d}x-\frac{2}{b^2}\int m^{-\frac{1}{b}-2} m_x m_{xx}u\psi_L(x_1)\,\mathrm{d}x\nonumber\\
    &-\frac{2}{b}\int m^{-\frac{1}{b}-2} m m_x u_{xx}\psi_L(x_1)\,\mathrm{d}x.
\end{align}
Gathering \eqref{vfm2} and \eqref{vfm3}, we get that
\begin{align}\label{vfm4}
    \frac{1}{b^2}\frac{\mathrm{d}}{\mathrm{d}t}\int m^{-\frac{1}{b}-2}m_x^2\psi_L(x_1)\,\mathrm{d}x=&\frac{1}{b^2}\int m^{-\frac{1}{b}-2} m_x^2 u\psi_L'(x_1)\,\mathrm{d}x-2\int m^{-\frac{1}{b}}  u_x\psi_L(x_1)\,\mathrm{d}x\nonumber\\
    &+\frac{2}{1-b}\int m^{-\frac{1}{b}+1}  \psi_L'(x_1)\,\mathrm{d}x-2\int m^{-\frac{1}{b}}  u\psi_L'(x_1)\,\mathrm{d}x.
\end{align}
Hence, combining \eqref{vfm1} with \eqref{vfm4}, this completes  the proof of \eqref{i1}.
\end{proof}

\section*{Acknowledgments}
 The work of Wang is partially supported by the NSFC grant 11901092. The authors express their gratitude to Professor Luc Molinet for stimulating discussions.

 \section*{Data Availability}
 The data that supports the findings of this study are available within the article.

 \section*{Conflict of interest}
 The authors have no conflicts to disclose.

\end{document}